\newtheorem {theorem}{Theorem}
\newtheorem {lemma}[theorem]{Lemma}
\newtheorem {proposition}[theorem]{Proposition}
\newtheorem {corollary}[theorem]{Corollary}
\newtheorem {conjecture}[theorem]{Conjecture}
\newtheorem {definition}[theorem]{Definition}
\theoremstyle{remark}
\newtheorem {remark}[theorem]{Remark}
\newtheorem {example}[theorem]{Example}
\numberwithin{equation}{section}
\numberwithin{theorem}{section}
\newcommand{\maxtb}{\overline{tb}}
\newcommand{\maxsl}{\overline{sl}}
\newcommand{\rank}{\operatorname{rank}}
\newcommand{\ZZ}{\mathbb{Z}}
\newcommand{\Z}{\mathbb{Z}}
\newcommand{\R}{\mathbb{R}}
\newcommand{\Q}{\mathbb{Q}}
\newcommand{\rv}{\vec{r}}
\newcommand{\sv}{\vec{s}}
\newcommand{\xican}{\xi_{\mathrm{can}}}
\newcommand{\sh}{S\mathbb{H}}
\newcommand{\lhho}{L\mathbb{H}^{\mathrm{Ho}}}
\newcommand{\spc}{\mathrm{Spin}^c}
\newcommand{\hfhat}{\widehat{HF}}
\newcommand{\hfp}{HF^{+}}
  \begingroup\color{blue}  Steven \; $\blacktriangleright$ \;}{%
  \begingroup\color{red} Tye \; $\blacktriangleright$ \;}{%
\title{Contact structures and reducible surgeries}
\date{}
\author{Tye Lidman}
\address{Department of Mathematics, The University of Texas at Austin}
\email{tlid@math.utexas.edu}
\author{Steven Sivek}
\address{Department of Mathematics, Princeton University}
\email{ssivek@math.princeton.edu}
\begin{document}

\begin{abstract}
We apply results from both contact topology and exceptional surgery theory to study when Legendrian surgery on a knot yields a reducible manifold.  As an application, we show that a reducible surgery on a non-cabled positive knot of genus $g$ must have slope $2g-1$, leading to a proof of the cabling conjecture for positive knots of genus 2.  Our techniques also produce bounds on the maximum Thurston-Bennequin numbers of cables.
\end{abstract}

\maketitle


\section{Introduction}
\subsection{Background}
Given a knot in $S^3$, an important problem in three-manifold topology is to classify the Dehn surgeries on $K$.  One of the biggest open problems in Dehn surgery is to determine the knots which admit reducible surgeries.  Gabai's proof of Property R \cite{gabai} shows that if 0-surgery on $K$ is reducible, then $K$ is in fact the unknot.  In particular, 0-surgery on a knot is always prime.  However, many non-trivial knots do have reducible surgeries.  If $K$ is the $(p,q)$-cable of a knot $K'$ (where $p$ is the longitudinal winding) and $U$ is the unknot, then $S^3_{pq}(K) = S^3_{p/q}(U) \# S^3_{q/p}(K')$.\footnote{The manifold $S^3_{p/q}(U)$ is of course a lens space, but we write it this way for now to avoid confusion: it is often called $L(p,q)$ by $3$-manifold topologists but $-L(p,q)$ by contact geometers.  We will use the latter convention throughout this paper.}  Conjecturally, these are the only such examples.  
\begin{conjecture}[Cabling Conjecture, Gonzalez-Acu\~na--Short \cite{gonzalez-acuna-short}]
Suppose Dehn surgery on a non-trivial knot $K$ is reducible.  Then $K = C_{p,q}(K')$ for some $K'$ and the surgery coefficient is $pq$.  
\end{conjecture}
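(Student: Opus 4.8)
The plan is to extract from a reducing sphere an essential annulus in the knot exterior whose boundary slope equals the surgery slope; such a \emph{cabling annulus} exhibits $K$ as a cable $C_{p,q}(K')$ and forces the coefficient to be $pq$. Throughout write $X = S^3 \setminus N(K)$ for the exterior, which is irreducible since $K \ne U$, and let $V$ denote the surgery solid torus, so that $S^3_r(K) = X \cup_\partial V$. First I would reduce to integral surgeries: by Gordon--Luecke's theorem that only integral Dehn surgeries can produce reducible manifolds \cite{gordon-luecke-reducible} it suffices to treat $r = n \in \Z$, and Gabai's Property R \cite{gabai} disposes of $n = 0$, so $n \ne 0$.

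Next, fix a reducing sphere $\widehat S \subset S^3_n(K)$ and isotope it to meet $V$ in a minimal number $t$ of meridian disks; set $S = \widehat S \cap X$, a planar surface properly embedded in $X$ whose boundary is $t$ parallel curves of slope $n$ on $\partial X$. Irreducibility of $X$ gives $t \ge 1$, and $t = 1$ would force $\partial X$ to compress, whence $K = U$; so $t \ge 2$. The heart of the argument is the combinatorial analysis, in the style of Gordon--Luecke, of the fat-vertexed intersection graphs that arise from comparing $S$ with a meridian disk of $V$ (vertices the disks of $\widehat S \cap V$, edges the arcs of intersection). Using the parity rule together with the absence of an essential sphere or torus in $X$, I would locate a Scharlemann cycle and build from it an essential annulus $A \subset X$ with $\partial A$ of slope $n$.

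By the classification of essential annuli in knot exteriors, such an $A$ must be a cabling annulus: it certifies that $K = C_{p,q}(K')$, and the surface slope read off from its boundary is exactly $n = pq$. The cabling formula $S^3_{pq}(K) = S^3_{p/q}(U) \# S^3_{q/p}(K')$ then identifies the two summands, completing the argument once the annulus has been produced. To make the slope identification robust, and to cover the cases in which the graph combinatorics stalls, I would bring in the contact- and Floer-theoretic input of this paper: when $n \le \maxtb(K)$, Legendrian surgery realizes $S^3_n(K)$ with a Stein-fillable contact structure $\xican$ whose Ozsv\'ath--Szab\'o contact invariant is nonzero and multiplicative across the connected sum, while the $\spc$-refined mapping-cone computation of $\hfhat(S^3_n(K))$ from $CFK^\infty(K)$ constrains $|n|$ in terms of $g(K)$ and obstructs a pair of ``large'' summands, again pinning $n$ to the cabling slope.

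The main obstacle is precisely the general combinatorial step: controlling the two intersection graphs without any genus, fibration, or positivity hypothesis on $K$. Equivalently, one must rule out a hypothetical reducible surgery in which \emph{neither} summand is a lens space, since in that regime both the Scharlemann-cycle bookkeeping and the Floer/contact bounds above lose their grip. This is exactly the point at which the conjecture is not currently known, so the proposal should be read as reducing the full statement to the single problem of producing a cabling annulus from an arbitrary reducing sphere; the contributions of this paper carry out that reduction under the contact-geometric hypotheses that hold, for instance, for positive knots of small genus.
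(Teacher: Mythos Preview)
The statement you are attempting to prove is the Cabling Conjecture itself, which remains open; the paper does not prove it but states it as a conjecture and establishes only partial results (for instance, for positive knots of genus at most $2$). There is therefore no proof in the paper to compare against, and your proposal is, as you yourself concede in the final paragraph, not a complete argument.

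Moreover, the obstacle you identify at the end is misstated. Gordon--Luecke already proved that every reducible surgery on a nontrivial knot has a lens space summand (this is item~\eqref{i:lens-summand} of Theorem~\ref{thm:reducible-background}), so the case in which ``neither summand is a lens space'' never occurs and is not where the difficulty lies. The genuine gap is earlier: your assertion that the graph-intersection combinatorics will ``locate a Scharlemann cycle and build from it an essential annulus $A \subset X$ with $\partial A$ of slope $n$'' is precisely the unknown step. Scharlemann cycles in this setting are what produce lens space summands, which we already have; promoting that to a cabling annulus in the exterior is the entire content of the conjecture and has resisted all attempts for hyperbolic knots. The contact and Heegaard Floer input you invoke does not close this gap either: the results of this paper bound the reducible slope in terms of $\maxtb(K)$ or $g(K)$ under additional hypotheses on $K$, but they neither produce an essential annulus in the exterior nor apply to an arbitrary knot.
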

The cabling conjecture is known for torus knots \cite{moser} and satellite knots \cite{scharlemann}, but is still open for hyperbolic knots.  Two key observations for the reducible surgeries on cables are that the surgery always produces a lens space summand and the surgery coefficients are integral.  In fact, Gordon and Luecke showed that both of these conditions must hold for a reducible surgery on any non-trivial knot.
\begin{theorem}[Gordon--Luecke \cite{gordon-luecke-integral,gordon-luecke-lens}]
If some Dehn surgery $S^3_r(K)$ on a non-trivial knot $K$ is reducible, then $r\in\ZZ$ and the surgery contains a lens space summand.
\end{theorem}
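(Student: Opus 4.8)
The plan is to pass to the knot exterior $M = S^3 \setminus \operatorname{int} N(K)$, which is irreducible with incompressible torus boundary because $K$ is non-trivial. Let $\mu$ denote the meridian, so that $M(\mu) = S^3$, and let $\gamma$ be the filling slope with $M(\gamma) = S^3_r(K)$. If $r = p/q$ in lowest terms then $\Delta(\gamma,\mu) = q$, where $\Delta$ is the minimal geometric intersection number of slopes on $\partial M$; hence the integrality claim $r \in \ZZ$ is exactly the distance bound $\Delta(\gamma,\mu) \le 1$. The whole problem thus reduces to bounding how far the reducible slope can lie from the meridian, and I would attack this with the combinatorial intersection-graph machinery of Culler--Gordon--Luecke--Shalen type.

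First I would manufacture two comparison surfaces in $M$. Since $M(\gamma)$ is reducible it contains an essential $2$-sphere $\hat S$; isotope it to meet the filling solid torus $V_\gamma$ transversally in the fewest possible meridian disks and set $P = \hat S \cap M$. This is a planar surface whose boundary is a union of $n \ge 1$ parallel copies of $\gamma$, nonempty because the irreducibility of $M$ forbids pushing $\hat S$ off $V_\gamma$. For the second surface I would exploit $M(\mu) = S^3$, taking a $2$-sphere $\hat Q$ in $S^3$ meeting the core $K$ of $V_\mu$ minimally (for instance a level sphere from a thin position of $K$) and letting $Q = \hat Q \cap M$ be the resulting planar surface of boundary slope $\mu$. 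After minimizing $|P \cap Q|$ by isotopy, each boundary circle of $P$ meets each boundary circle of $Q$ in exactly $\Delta(\gamma,\mu)$ points, and $P \cap Q$ is a disjoint union of arcs.

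The heart of the argument is the combinatorics of the two graphs $G_P \subset \hat S$ (vertices the disks $\hat S \cap V_\gamma$, edges the arcs of $P \cap Q$) and $G_Q \subset \hat Q$ (defined symmetrically). Assuming $\Delta(\gamma,\mu) \ge 2$ for contradiction, each vertex acquires high valence while the ambient surfaces are spheres, so an Euler-characteristic (Gauss--Bonnet type) count on these planar graphs forces a face of low complexity and hence a Scharlemann cycle. Such a cycle bounds a disk in $V_\gamma$ or $V_\mu$ whose boundary must then be capped off inside the solid torus; tracing the configuration back yields either a lens space where none is possible or a reducing sphere in $S^3$, contradicting the irreducibility of $S^3$ (respectively of $M$). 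Systematically ruling out every such configuration --- the bookkeeping of Scharlemann cycles, the parity of edge labels, and the ``great web'' counting --- is the genuinely hard and lengthy step; the surface-theoretic reductions surrounding it are comparatively routine, and I expect this combinatorial analysis to be the main obstacle.

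Finally, for the lens space summand I would use integrality $r = n \in \ZZ$, so that $H_1(S^3_n(K)) = \ZZ/n$. Writing the reducible manifold as $Y_1 \# Y_2$ with both summands non-trivial, I would take a reducing sphere meeting $V_n$ minimally and analyze the two complementary pieces, noting that any separating sphere meets the core of $V_n$ an even number of times. Combining this intersection data with the homological constraint that $H_1$ splits as a direct sum forces one complementary piece to be a once-punctured solid-torus filling, i.e.\ a punctured lens space. Identifying that summand with a manifold of the form $S^3_{p/q}(U)$, exactly as in the model cabling decomposition $S^3_{pq}(C_{p,q}(K')) = S^3_{p/q}(U) \# S^3_{q/p}(K')$, then exhibits the required lens space summand.
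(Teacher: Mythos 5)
You should first note that the paper does not prove this statement at all: it is quoted as background, with the integrality claim cited to \cite{gordon-luecke-integral} and the lens space summand to \cite{gordon-luecke-lens}, so the only fair benchmark is the Gordon--Luecke arguments themselves. Your outline correctly locates that machinery (intersection graphs of two punctured surfaces, the parity rule, Scharlemann cycles), but as written it has two genuine gaps beyond the honestly flagged deferral of the combinatorics. First, the construction of the meridional surface $Q$ fails as stated: a $2$-sphere in $S^3$ ``meeting the core $K$ of $V_\mu$ transversally in the fewest possible points'' meets it in $0$ or, if you insist on intersection, $2$ points, yielding a compressible or boundary-parallel annulus that carries no combinatorial information; moreover, many knot exteriors contain \emph{no} essential meridional planar surface, so no minimization scheme can produce one. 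The actual substitute, which your parenthetical gestures at but does not use correctly, is Gabai's thin position: a thick level sphere is typically compressible, but it admits no disjoint pair of upper and lower disks, and it is precisely this weaker property --- not incompressibility or minimality of $|\hat Q \cap K|$ --- that makes the graph argument on $G_Q$ run.

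Second, your derivation of the lens space summand is a non sequitur. From $H_1(S^3_n(K)) \cong \ZZ/n$ splitting over the connected sum, one only learns that each summand has cyclic (finite) first homology of coprime order; a homology sphere or an arbitrary homology lens space summand satisfies every constraint you impose, including the parity of intersections of a separating sphere with the core of $V_n$, so nothing ``forces one complementary piece to be a once-punctured solid-torus filling.'' Indeed, producing the lens space is the hard content of \cite{gordon-luecke-lens}, and it is obtained \emph{constructively}, not homologically: a Scharlemann cycle of length $p$ in the relevant graph bounds a face $f$, and a regular neighborhood of $f$ together with the portion of the filling solid torus between the two meridian disks involved is a punctured $L(p,q)$ embedded in the surgered manifold; the bulk of the proof consists in showing such a cycle must exist (via great webs and related counting). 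Since that step, together with the full case analysis you defer for integrality, constitutes essentially the entirety of both Gordon--Luecke papers, the proposal as it stands is a roadmap to the literature rather than a proof, and its two concrete set-up steps --- the choice of $Q$ and the homological finish --- would both need to be replaced by the thin-position and Scharlemann-cycle arguments, respectively.
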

One consequence of this is that if $n$-surgery on $K$ is reducible, then $|n| \geq 2$.  Another consequence is that a reducible surgery on a cable knot will have exactly two summands.  It is not known that a reducible surgery on a non-cable knot cannot have more than two summands; however, it is known that if there are not two summands, the reducible manifold is a connected sum of two lens spaces and an irreducible homology sphere \cite{howie}.  A weaker version of the cabling conjecture is the three summands conjecture, which says that reducible surgery never has more than two summands.  

In this paper, it is our goal to study when Legendrian surgery on a knot can be reducible.  Recall that for a Legendrian representative of $K$, performing {\em Legendrian surgery} on $K$ is topologically Dehn surgery with coefficient $tb(K) - 1$.  Let $\maxtb(K)$ denote the maximum Thurston-Bennequin number of any Legendrian representative of $K$.  Since stabilizations reduce $tb$ by one, any integral surgery coefficient strictly less than $\maxtb(K)$ will correspond to a Legendrian surgery.  

Our results will all stem from the following theorem of Eliashberg.
\begin{theorem}[Eliashberg \cite{eliashberg-filling, cieliebak-eliashberg}]
\label{thm:steinboundary-intro}
Suppose that $(X,J)$ is a Stein filling of a non-prime contact $3$-manifold $(Y_1,\xi_1) \# (Y_2,\xi_2)$.  Then $(X,J)$ decomposes as a boundary sum $(X_1,J_1) \natural (X_2,J_2)$, where $(X_i,J_i)$ is a Stein filling of $(Y_i,\xi_i)$.\end{theorem}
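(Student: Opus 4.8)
The plan is to realize the connected-sum $2$-sphere inside the filling as the boundary of a suitable $3$-ball and then cut $X$ along it. Topologically, exhibiting $X$ as a boundary sum $(X_1,J_1)\natural(X_2,J_2)$ amounts to producing a properly embedded $3$-ball $B\subset X$ whose boundary is the connected-sum sphere $S\subset Y_1\#Y_2$: cutting $X$ along $B$ separates it into two pieces $X_1,X_2$, and since $B$ is a ball, filling the sphere $S$ back in shows $\partial X_i=Y_i$ and $X=X_1\natural X_2$. The entire content of the theorem is therefore to find such a $B$ that is compatible with the complex structure---concretely, a Levi-flat ball foliated by $J$-holomorphic disks with boundary on $S$---so that $J$ and the plurisubharmonic exhaustion descend to Stein structures on the two sides.

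First I would fix the contact-geometric model near $S$. The sphere $S$ is convex with dividing set a single circle $\gamma$, and by the uniqueness of the tight contact structure on $S^3$ a neighborhood of $S$ in $Y_1\#Y_2$ is the standard connected-sum neck. After a small perturbation $S$ carries exactly two elliptic complex points (index count $+1+1=\chi(S^2)=2$, with no hyperbolic points), and these will be the poles at which the disks of $B$ degenerate. The standard Bishop family filling the model neck---the restriction of the family that foliates the round ball $B^4\subset\C^2$---then produces, near $\partial X$, a germ of the desired foliated ball: a one-parameter family of small embedded holomorphic disks whose boundaries are the ``latitude'' circles of $S$ parallel to $\gamma$.

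The core step is to continue this family into the interior of $X$. Here I would invoke the Gromov--Bishop technique of filling by $J$-holomorphic disks: enlarge the initial family to a maximal connected moduli space of embedded disks with boundary on $S$. Because the \emph{Stein} structure is exact, the symplectic areas of the disks are controlled by the primitive of the symplectic form, and positivity of intersections in a complex surface together with automatic transversality should keep the members embedded and mutually disjoint. The union of their closures is the candidate ball $B$, foliated by the disks and pinched at the two elliptic poles of $S$.

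The main obstacle is the compactness and global structure of this moduli space. I must rule out bubbling and boundary breaking, and show that the family terminates precisely by collapsing onto the second elliptic point rather than by escaping into $\partial X$ away from $S$ or by developing an interior singularity; only then is the swept-out region an honestly embedded $3$-ball. This is exactly where the Stein hypothesis, as opposed to mere symplectic fillability, does the work: the plurisubharmonic exhaustion yields a maximum principle that confines the disks to the interior and forces the correct degeneration at the ends. Once $B$ is constructed, cutting along it and rounding the resulting corner along $\gamma$ should produce the two Stein domains $(X_i,J_i)$, with the holomorphic foliation certifying that the induced structures are again Stein and fill $(Y_i,\xi_i)$.
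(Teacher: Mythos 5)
The paper does not actually prove this theorem---it is quoted directly from Eliashberg and Cieliebak--Eliashberg---and your proposal reconstructs essentially the argument from those sources: realize the reducing sphere with two elliptic complex points, fill it by a Bishop--Gromov family of $J$-holomorphic disks (with exactness of the Stein structure ruling out sphere bubbles, the plurisubharmonic exhaustion confining the disks, and positivity of intersections giving embeddedness) to obtain a Levi-flat $3$-ball $B$, and split $X$ along it. Two small corrections to an otherwise faithful sketch: the corner produced by cutting along $B$ lies along $S=\partial B$ itself, not along the dividing circle $\gamma$; and the final step of converting each cut piece into an honest Stein filling of $(Y_i,\xi_i)$ requires the standard deformation of the Levi-flat ball to a strictly pseudoconvex cap inducing the tight structure on the boundary sphere, which you gesture at but should make explicit.
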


\subsection{Reducible surgeries for knots with $\maxtb(K) \geq 0$} 
We will first use Theorem~\ref{thm:steinboundary-intro} to prove the following.  
\begin{proposition}\label{prop:decomp}
Let $K$ be a knot in $S^3$ and suppose that $S^3_n(K) = L(p,q) \# Y$ where $n < \maxtb(K)$.  Then:
\begin{enumerate} 
\item $p = |n|$, and $n < -1$;
\item $L(p,q)$ admits a simply-connected Stein filling with intersection form $\langle n \rangle = \langle -p\rangle$;
\item $Y$ is an irreducible integer homology sphere which admits a contractible Stein filling.
\end{enumerate}
\end{proposition}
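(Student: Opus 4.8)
The plan is to realize the surgery as a Stein filling, split that filling using Theorem~\ref{thm:steinboundary-intro}, and then read off everything from the homology of the pieces. Since $n < \maxtb(K)$, the discussion preceding the theorem lets me choose a Legendrian representative $\mathcal{L}$ of $K$ with $tb(\mathcal{L}) = n+1$, so that Legendrian surgery on $\mathcal{L}$ is smoothly $n$-surgery and produces a Stein domain $(X,J)$ filling the resulting contact manifold $(S^3_n(K),\xi)$. Concretely $X = B^4 \cup h$, a single $2$-handle attached along $\mathcal{L}$ with framing $tb(\mathcal{L})-1 = n$; hence $X$ is simply connected with $H_2(X) \cong \ZZ$ and intersection form $\langle n\rangle$, and $\xi$ is Stein fillable, hence tight.

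Next I would split this filling. Taking the prime decomposition $S^3_n(K) = P_1 \# \cdots \# P_m$, the tightness of $\xi$ lets me write it as a contact connected sum $\xi = \xi_1 \# \cdots \# \xi_m$ with each $\xi_i$ tight on $P_i$ (by Colin's theorem on tight contact structures on reducible manifolds). Applying Theorem~\ref{thm:steinboundary-intro} repeatedly then decomposes $X$ as a boundary sum $X = X_1 \natural \cdots \natural X_m$, where each $(X_i,J_i)$ is a Stein filling of $(P_i,\xi_i)$. A boundary sum is simply connected precisely when each summand is, and $H_2(X) = \bigoplus_i H_2(X_i)$ with intersection form the orthogonal sum of the $Q_{X_i}$. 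Since the total form $\langle n\rangle$ has rank one, exactly one summand $X_{i_0}$ has $H_2 \cong \ZZ$ with form $\langle n\rangle$, while every other $X_j$ is a simply-connected Stein domain with $H_2 = 0$; as a Stein domain has a handle decomposition with handles of index at most $2$, each such $X_j$ is acyclic and simply connected, hence contractible.

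Now I read off the topology. For a simply connected compact $4$-manifold $W$ with boundary one has $H_1(\partial W)\cong \operatorname{coker}(Q_W)$, so $\partial X_{i_0}$ satisfies $H_1 \cong \ZZ/|n|$ while each contractible $X_j$ bounds an integer homology sphere. Thus among the primes $P_i$ exactly one has nonzero (cyclic) first homology $\ZZ/|n|$, and the rest are integer homology spheres admitting contractible Stein fillings. Since $L(p,q)$ with $p \geq 2$ has $H_1 = \ZZ/p \neq 0$, it must be the distinguished summand $P_{i_0}$; this forces $p = |n|$ and exhibits the simply connected Stein filling $X_{i_0}$ of $L(p,q)$ with form $\langle n\rangle$, giving part (2) and the equality in (1). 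The remaining factors assemble into the integer homology sphere $Y$, bounding the boundary sum of the contractible pieces, which is again a contractible Stein filling.

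It remains to pin down the sign of $n$ and the irreducibility of $Y$, and these are where the real work lies. For the sign, $X_{i_0}$ is a Stein filling of a lens space with $b_2 = 1$, and I would argue its form $\langle n\rangle$ is negative definite, either by invoking Lisca's classification of symplectic fillings of lens spaces or by capping $X_{i_0}$ with the canonical negative-definite plumbing bounded by $L(p,q)$ and applying Donaldson's diagonalization theorem; this yields $n < 0$, and with $p = |n| \geq 2$ gives $n \leq -2$, i.e. $n < -1$ (consistent with the Gordon--Luecke bound $|n|\geq 2$), completing (1). For (3), the homology read-off shows that \emph{at most one} prime summand has nonzero $H_1$; were there more than two prime summands, Howie's structure theorem would force two lens-space summands, hence two summands with nonzero $H_1$, a contradiction. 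Thus there are exactly two summands, so $Y$ is a single prime integer homology sphere, hence irreducible, finishing (3). I expect the main obstacle to be the orientation and definiteness bookkeeping that shows the lens-space filling is negative definite (so that $n<0$ rather than $n>0$); that step, together with correctly marshalling the external inputs — Colin's splitting, the definiteness of lens-space fillings, and Howie's theorem — is what upgrades the purely homological picture to the sharp conclusions.
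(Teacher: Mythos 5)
Your proposal is correct and follows essentially the same route as the paper's own proof: realize the surgery trace as a Stein $2$-handlebody with form $\langle n\rangle$, split it by Eliashberg's boundary-sum theorem, read off $p=|n|$ and the contractibility of the second factor from the homology of the pieces, rule out $n>0$ via Lisca's theorem that Stein fillings of lens spaces have $b_2^+=0$, and deduce irreducibility of $Y$ from Howie's structure theorem for surgeries with three summands. One caveat: your alternative sign argument (capping the lens-space filling with the negative-definite plumbing and applying Donaldson) does not work as stated, since when $n>0$ the resulting closed manifold has $b_2^+=1$ and is indefinite, so diagonalization does not apply; your primary route via Lisca, which is exactly the paper's, is what carries that step.
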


The three summands conjecture follows immediately for $S^3_n(K)$ when $n < \maxtb(K)$.  If $S^3_n(K)$ has at least three summands then so does $-S^3_n(K) = S^3_{-n}(\overline{K})$, where $\overline{K}$ is the mirror of $K$, so we conclude:
\begin{corollary}
Let $K$ be a knot in $S^3$.  If $S^3_n(K)$ has more than two summands, then $\maxtb(K) \leq n \leq -\maxtb(\overline{K})$.
\end{corollary}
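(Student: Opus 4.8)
The plan is to reduce the statement to Proposition~\ref{prop:decomp} together with the theorem of Gordon and Luecke, using the contrapositive and a mirroring argument. The key intermediate claim is that \emph{whenever $n < \maxtb(K)$, the manifold $S^3_n(K)$ has at most two prime summands}.

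First I would verify this claim. If $S^3_n(K)$ is irreducible there is nothing to prove, so assume it is reducible; then $K$ is necessarily non-trivial, since surgery on the unknot always yields a lens space or $S^1 \times S^2$, which is prime. By the theorem of Gordon and Luecke, $n \in \ZZ$ and $S^3_n(K)$ contains a lens space summand, so we may write $S^3_n(K) = L(p,q) \# Y$ with $L(p,q)$ a genuine lens space. Since $n < \maxtb(K)$, Proposition~\ref{prop:decomp} applies, and part (3) tells us that the complementary summand $Y$ is an \emph{irreducible} integer homology sphere. As a lens space is itself prime, $L(p,q) \# Y$ is a connected sum of at most two prime manifolds, which establishes the claim.

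Taking the contrapositive, if $S^3_n(K)$ has more than two summands then we cannot have $n < \maxtb(K)$, so $n \geq \maxtb(K)$; this is the lower bound. For the upper bound I would pass to the mirror. Orientation reversal sends a prime decomposition $P_1 \# \cdots \# P_k$ to $(-P_1) \# \cdots \# (-P_k)$, with each $-P_i$ again prime and non-trivial, so it preserves the number of summands. Hence $S^3_{-n}(\overline{K}) = -S^3_n(K)$ also has more than two summands, and the lower bound applied to $\overline{K}$ at slope $-n$ gives $-n \geq \maxtb(\overline{K})$, that is, $n \leq -\maxtb(\overline{K})$. Combining the two inequalities yields the asserted range $\maxtb(K) \leq n \leq -\maxtb(\overline{K})$.

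The argument is essentially a bookkeeping deduction from Proposition~\ref{prop:decomp}, so the only points requiring genuine care are (i) confirming that the lens space summand supplied by Gordon--Luecke is nontrivial, so that Proposition~\ref{prop:decomp} truly applies with an honest $L(p,q)$, and (ii) that orientation reversal preserves the count of prime summands. Neither of these is a real obstacle: the substantive work has already been carried out in establishing Proposition~\ref{prop:decomp}, whose part (3) is exactly the statement that rules out a third summand.
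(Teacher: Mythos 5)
Your proof is correct and takes essentially the same route as the paper: Proposition~\ref{prop:decomp} (in particular the irreducibility of the homology sphere summand) shows that $S^3_n(K)$ has at most two summands whenever $n < \maxtb(K)$, and the identity $-S^3_n(K) = S^3_{-n}(\overline{K})$ applied to the mirror yields the upper bound. The paper's argument is a condensed version of exactly this contrapositive-plus-mirroring deduction, so there is nothing to add.
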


From Proposition~\ref{prop:decomp}, we are able to apply the existence of the Stein fillings to study the cabling conjecture via known results in contact topology, such as the classification of tight contact structures on lens spaces.  For instance, we prove the following.  

\begin{theorem}\label{thm:main}
Let $K$ be a knot in $S^3$ and suppose that $\maxtb(K) \geq 0$.  Then any surgery on $K$ with coefficient less than $\maxtb(K)$ is irreducible.
\end{theorem}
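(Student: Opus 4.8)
The plan is to argue by contradiction, using Proposition~\ref{prop:decomp} to reduce the statement to one about Stein fillings of lens spaces, and then to rule out the resulting filling. The point of the hypothesis $\maxtb(K)\geq 0$, as opposed to the weaker $n<\maxtb(K)$ that already feeds into Proposition~\ref{prop:decomp}, is that genuine reducible surgeries do occur at negative slopes (for instance on suitable negative cables), and these are precisely the knots with $\maxtb(K)<0$; so positivity of $\maxtb$ must be what pushes the filling data out of the admissible range.

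I would begin by supposing that some $S^3_n(K)$ with $n<\maxtb(K)$ is reducible. By the Gordon--Luecke theorem $n\in\ZZ$ and there is a lens space summand, so I may write $S^3_n(K)=L(p,q)\#Y$ with $L(p,q)$ non-trivial. Proposition~\ref{prop:decomp} then does the structural work: it gives $p=|n|$ and $n\leq -2$, produces a simply-connected Stein filling $X_1$ of $L(p,q)$ with intersection form $\langle -p\rangle$, and shows $Y$ bounds a contractible Stein filling $X_2$. (Here Theorem~\ref{thm:steinboundary-intro} is what lets the filling of the connected sum split off $X_1\natural X_2$.) The whole problem is now reduced to the existence of a very rigid object: a simply-connected Stein filling of $L(p,q)$ by a single class of square $-p$, at a negative slope.

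The core step is to show such a filling is impossible once $\maxtb(K)\geq 0$. A simply-connected Stein filling with $b_2=1$ and form $\langle -p\rangle$ is essentially the disc bundle $D_{-p}$ over $S^2$, so by the classification of minimal symplectic/Stein fillings of lens spaces (McDuff's uniqueness and Lisca's classification) the lens space is forced to be $\partial D_{-p}$, and the first Chern class $c_1(X_1)$ is pinned down. Capping $X_1$ off against the canonical negative-definite linear plumbing bounding $-L(p,q)$ gives a closed negative-definite $4$-manifold, to which Donaldson's diagonalization theorem applies, turning the lattice matching into a diophantine condition on $(p,q)$. The remaining input is contact-geometric: the generator of $H_2(X_1)$ is represented by a Seifert surface of $K$ capped by the core of the Legendrian $2$-handle, a surface of genus $g(K)$ and square $-p$, and $\langle c_1(X_1),[\widehat{\Sigma}]\rangle$ equals the rotation number of the surgery knot, which the slice--Bennequin inequality controls in terms of $g(K)$ and $n$.

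The main obstacle--and the place where $\maxtb(K)\geq 0$ is indispensable--is this final matching. The hypothesis lets me start from a Legendrian representative with $tb=\maxtb(K)\geq 0$ and realize the slope $n=-p$ only after $\maxtb(K)+p-1$ stabilizations, which constrains the achievable rotation numbers, hence the admissible values of $c_1(X_1)[\widehat\Sigma]$; I expect to show that these, combined with the diophantine constraints from diagonalization and the adjunction bound $|r|\le 2g(K)-2-n$ in the filling, leave no consistent configuration. Equivalently, the filling data forces $K$ to have the shape of a negative cable, for which $\maxtb<0$, contradicting $\maxtb(K)\geq 0$. Carrying out this incompatibility cleanly--reconciling the lattice-theoretic output of Donaldson's theorem with the contact bound coming from positivity of $\maxtb$--is the crux of the argument.
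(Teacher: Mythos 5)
Your reduction via Proposition~\ref{prop:decomp} matches the paper, but the pivotal claim in your ``core step'' is false: a simply-connected Stein filling of a lens space with $b_2=1$ and intersection form $\langle -p\rangle$ is \emph{not} forced to be $D_{-p}$, and the lens space is not forced to be $L(p,1)$. For $p=4k+3$, the trace of Legendrian surgery on the torus knot $T_{2,-(2k+1)}$ with $tb=1-p$ is exactly such a filling of $L(p,4)=S^3_{-p}(T_{2,-(2k+1)})$; these fillings appear in the paper itself (Example~\ref{ex:L-4k+3-4}, Lemma~\ref{lem:fillings-L-p-4}, and case~\eqref{i:tb-negative-lp4} of Theorem~\ref{thm:tb-negative-thm}). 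McDuff's uniqueness theorem concerns $L(p,1)$ only, and Lisca's classification in fact \emph{exhibits} $b_2=1$ fillings of other lens spaces rather than excluding them. With that uniqueness gone, your Donaldson-diagonalization scheme has no footing: you do not know which lens space you are filling, so there is no canonical plumbing to cap with and no pinned-down $c_1$ to feed into a lattice argument. (Note also that the paper never needs Donaldson's theorem here; when it does need filling classifications, in Section~\ref{sec:negative-tb}, it is precisely because the $b_2=1$ fillings of $L(p,1)$ and $L(4k+3,4)$ \emph{cannot} be excluded and must be identified instead.)

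There is also a direction-of-use error in how you deploy $\maxtb(K)\geq 0$: the hypothesis does not ``constrain the achievable rotation numbers'' into a small set that can be shown inconsistent--it produces rotation numbers that are too \emph{large} and too \emph{numerous}. Starting from $tb=0$, $r_0\geq 1$, stabilizing $p-1$ times with varying signs yields representatives with $tb=1-p$ realizing $p+1$ distinct rotation numbers, including $|r|=r_0+p-1\geq p$. The paper's first proof exploits the abundance: by Proposition~\ref{prop:reducible-d3-value} each such $r$ induces a tight contact structure on $L(p,q)$ with $d_3=-\frac{r^2+p}{4p}$, giving at least $\left\lceil\frac{p+1}{2}\right\rceil$ distinct $d_3$-values, while the Giroux--Honda classification caps the number of tight structures at $p-1$ (Proposition~\ref{prop:count-xi}), which pair off into conjugates with equal $d_3$, allowing at most $\left\lceil\frac{p-1}{2}\right\rceil$ values--a contradiction. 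The second proof exploits the extremal value: the representative with $|r|\geq p$ forces $d_3(\xi)\leq -\frac{p+1}{4}$ (Proposition~\ref{prop:reducible-d3-bound}), whereas every tight structure on every $L(p,q)$ satisfies $d_3(\xi)\geq -\frac{p-1}{4}$ (Theorem~\ref{thm:d3-bound-lens-spaces}, via the recurrence for $d_3(\xican)$). Your proposal defers exactly this incompatibility to a final step you only ``expect to show,'' and the mechanism that actually closes it is a $d_3$-invariant comparison, not lattice theory; as written, the argument has a genuine gap at its crux.
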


It is a theorem of Matignon and Sayari \cite{matignon-sayari} that if $S^3_n(K)$ is reducible for a non-cable $K$, then $|n| \leq 2g(K) - 1$, where $g(K)$ is the Seifert genus of $K$.  Therefore, if $\maxtb(K)$ is large, this can strongly restrict the range of possible reducible surgeries on $K$.  We illustrate this with positive knots. 
\begin{theorem}\label{thm:pos}
Suppose that $K$ is a non-trivial positive knot which is not a cable.  If $S^3_n(K)$ is reducible, then $n = 2g(K) - 1$.  Consequently, there are no essential, punctured projective planes in the complement of $K$.
\end{theorem}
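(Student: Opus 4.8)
The plan is to trap the coefficient $n$ between $2g(K)-1$ from below and from above. The lower bound comes from the contact geometry of Theorem~\ref{thm:main}, applied to the large maximal Thurston--Bennequin number of a positive knot, while the matching upper bound is the topological inequality of Matignon--Sayari \cite{matignon-sayari}.

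First I would isolate the single feature of positive knots that makes the argument run. A positive diagram $D$ realizes the Seifert genus through Seifert's algorithm, so that $2g(K)-1 = c(D)-s(D)$, where $c(D)$ is the number of crossings and $s(D)$ the number of Seifert circles (positive diagrams are genus-minimizing). Such a diagram can be converted into a Legendrian front with $tb = c(D)-s(D)$, giving $\maxtb(K)\geq 2g(K)-1$. Since $K$ is non-trivial we have $g(K)\geq 1$, hence $\maxtb(K)\geq 1 > 0$ and we are in the setting of Theorem~\ref{thm:main}; note that only this lower bound on $\maxtb(K)$, not the sharp equality, is needed. Now assume $S^3_n(K)$ is reducible. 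Gordon--Luecke \cite{gordon-luecke-integral,gordon-luecke-lens} give $n\in\ZZ$ and $|n|\geq 2$, while Theorem~\ref{thm:main} rules out reducible surgeries of coefficient below $\maxtb(K)$, so $n\geq\maxtb(K)\geq 2g(K)-1\geq 1$; in particular $n=|n|$. Because $K$ is not a cable, Matignon--Sayari \cite{matignon-sayari} bound $|n|\leq 2g(K)-1$. Combining the two inequalities forces $n=2g(K)-1$.

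For the stated consequence I would argue by contradiction. Suppose $P$ is an essential punctured projective plane in the exterior of $K$, with boundary slope $\gamma$. Filling $S^3$ along $\gamma$, every boundary circle of $P$ caps off with a meridian disk of the filling torus, and the result is a closed embedded $\mathbb{RP}^2$ in $S^3_\gamma(K)$; essentiality of $P$ is what guarantees that the $2$-sphere bounding a regular neighborhood (a punctured $\mathbb{RP}^3$) of this $\mathbb{RP}^2$ is essential, so $S^3_\gamma(K)$ is reducible. The parity of $\gamma$ is then forced by a mod-$2$ count: a one-sided $\mathbb{RP}^2$ is nonzero in $H_2(S^3_\gamma(K);\mathbb{Z}/2)$, whereas $H_2(S^3_n(K);\mathbb{Z}/2)\cong\mathrm{Tor}(\mathbb{Z}/n,\mathbb{Z}/2)$ vanishes when $n$ is odd, so $\gamma$ must be even. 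This contradicts the first part of the theorem, which makes $2g(K)-1$ the unique reducible slope, and that slope is odd. Hence no such $P$ exists.

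The genus and Thurston--Bennequin bookkeeping and the homology calculation are routine. The step I expect to demand the most care is the passage from an essential punctured projective plane to a genuinely reducible surgery: one must check that capping $P$ cannot produce a $2$-sphere that bounds a ball, i.e. that $S^3_\gamma(K)$ is not simply $\mathbb{RP}^3$, and that the boundary slope $\gamma$ is integral and non-meridional (a meridional $\mathbb{RP}^2$ would embed in $S^3$, which is impossible). The cleanest route is probably to quote the statement linking essential punctured projective planes to reducible fillings directly from Matignon--Sayari \cite{matignon-sayari} rather than to reprove it; that packaging is where the real content of the consequence sits.
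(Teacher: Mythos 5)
Your proposal is correct, and its overall skeleton is the same as the paper's: trap $n$ between the contact-geometric lower bound of Theorem~\ref{thm:main} and the Matignon--Sayari upper bound (with non-hyperbolic knots dispatched via Moser/Scharlemann, items~(3)--(4) of Theorem~\ref{thm:reducible-background} --- your compressed citation of \cite{matignon-sayari} should formally route through that hyperbolicity dichotomy, as the paper does). Where you genuinely diverge is in the key input $\maxtb(K) \geq 2g(K)-1$. The paper proves the sharp equality $\maxtb(K)=2g(K)-1$ (Proposition~\ref{prop:max-tb-positive-knot}) via Hayden--Sabloff's Lagrangian fillability of positive knots, Chantraine's theorem $\maxtb = 2g_s-1$ for fillable knots, and Rasmussen's $g_s=g$; you instead construct a Legendrian front from a positive diagram $D$ with $tb = c(D)-s(D)$ and invoke Cromwell's theorem that positive diagrams realize the Seifert genus. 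Your route is more elementary --- no Lagrangian fillings or the $s$-invariant --- and suffices here since only the lower bound is needed; be aware, though, that the front construction for a general positive diagram (as opposed to a positive braid closure) is not quite routine and is a theorem of Tanaka, which you should cite rather than assert. The paper's route buys the equality $\maxtb=2g-1$, which it reuses elsewhere (the genus-$2$ theorem and Proposition~\ref{prop:berge-tb}). For the punctured-projective-plane consequence, the paper simply outsources to \cite[Corollary~1.5]{hom-lidman-zufelt}; your sketch (cap off to a one-sided $\mathbb{RP}^2$, which forces the slope to be even by the $H_2(\,\cdot\,;\Z/2\Z)$ count, contradicting that the unique reducible slope $2g(K)-1$ is odd) is exactly the argument hiding behind that citation, and you correctly identify that the genuine content --- excluding the degenerate $\mathbb{RP}^3$ case and getting reducibility from essentiality --- lives in \cite{matignon-sayari} and is best quoted rather than reproved.
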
  

Without additional information, one cannot apply Theorem~\ref{thm:main} to rule out the case of $(2g(K) - 1)$-surgery, since Bennequin's inequality \cite{bennequin} implies that $\maxtb(K) \leq 2g(K) - 1$.  However, in some cases, one can in fact rule out this final surgery coefficient.  
\begin{theorem}\label{thm:cabling-genus2-positive}
The cabling conjecture holds for genus 2 positive knots.  
\end{theorem}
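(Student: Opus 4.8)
The plan is to combine Theorem~\ref{thm:pos} with a Heegaard Floer computation that obstructs the one remaining surgery slope. If $K$ is a cable (in particular, a torus knot) the cabling conjecture already holds by Scharlemann \cite{scharlemann} and Moser \cite{moser}, so I assume $K$ is a non-cable positive knot with $g(K)=2$. Since $K$ is positive, Bennequin's inequality \cite{bennequin} is sharp and $\maxtb(K)=2g(K)-1=3$; thus Theorem~\ref{thm:pos} leaves $S^3_3(K)$ as the only candidate for a reducible surgery. Suppose it is reducible. As $|H_1(S^3_3(K))|=3$ is prime and a lens space summand is forced by Gordon--Luecke \cite{gordon-luecke-lens}, the only possibility is $S^3_3(K)=L(3,q)\# Y$ with $Y$ an integer homology sphere, and I aim for a contradiction.

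First I would pin down the two summands using correction terms. Positivity gives $\tau(K)=g(K)=2$, hence $\nu^+(K)=2$ (since $\tau\le\nu^+\le g$), so the Ni--Wu invariants satisfy $V_1\ge 1$ while $V_2=0$. In the standard orientation convention the Ni--Wu surgery formula computes the three correction terms of $S^3_3(K)$ as $\{\,\tfrac12-2V_0,\ -\tfrac16-2V_1,\ -\tfrac16-2V_1\,\}$, whereas the connected sum contributes $\{\,d(L(3,q),j)+d(Y)\,\}_j$. Two of the surgery correction terms always coincide, and matching the two multisets is possible only for the orientation $L(3,q)=L(3,1)$ --- the other orientation forces $d(Y)\notin\Z$, which is impossible for a homology sphere --- and only if $V_0=V_1$. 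Together with $V_1\ge 1$ and the monotonicity $V_1-V_2\in\{0,1\}$ this yields $V_0=V_1=1$ and $d(Y)=-2$, exactly the invariants carried by $S^3_3(T(2,5))$.

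The crux is to upgrade this numerical coincidence into an identification of $K$, and this is where I expect the main difficulty. Because $L(3,1)$ is an $L$-space, $\hfhat(S^3_3(K))\cong\hfhat(L(3,1))\otimes\hfhat(Y)$ has the same rank in each of the three $\spc$ structures, namely $\operatorname{rank}\hfhat(Y)$. On the other hand the integer surgery mapping cone for slope $3$ computes the group in the $\spc$ structure $i$ from the large-surgery complex $A^+_{\,i}$, with $A^+_0$ governing $i=0$ and $A^+_{\pm1}$ governing $i=1,2$. Forcing the reduced homologies of $A^+_0$ and $A^+_1$ to agree, and using that positivity constrains $\widehat{HFK}(K)$, I would argue that the reduced knot Floer homology must vanish; equivalently, that $Y$ is an $L$-space homology sphere and $K$ is an $L$-space knot. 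The obstacle is that the correction terms alone do not determine $Y$, so this step requires genuine control of the reduced groups $\hfp_{\mathrm{red}}(A^+_0)$ and $\hfp_{\mathrm{red}}(A^+_1)$ rather than only their bottom gradings.

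Granting that $K$ is an $L$-space knot of genus $2$, its symmetrized Alexander polynomial must equal $t^2-t+1-t^{-1}+t^{-2}$, and the classification of $L$-space knots of small genus then forces $K=T(2,5)$. But $T(2,5)$ is a torus knot, so by Moser \cite{moser} its only reducible surgery has slope $10$, and $S^3_3(T(2,5))$ is an irreducible Seifert fibered space. This contradicts the reducibility of $S^3_3(K)$. Hence a non-cable positive knot of genus $2$ admits no reducible surgery, and together with the cable case this establishes the cabling conjecture in genus~$2$.
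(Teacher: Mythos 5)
Your reduction to the single slope $n=3$ is right, and your Ni--Wu correction-term computation is essentially correct (the multiset matching does force the $S^3_3(U)$ orientation on the lens summand, $V_0=V_1=1$, and $d(Y)=-2$). But the argument has a genuine gap exactly where you flag it: nothing in the proposal actually shows that $K$ is an L-space knot, equivalently that $Y$ is an L-space. The $d$-invariants cannot detect this --- as your own computation shows, they are perfectly consistent with the reducible picture --- and the mapping-cone step (``forcing the reduced homologies of $A^+_0$ and $A^+_1$ to agree \dots\ the reduced knot Floer homology must vanish'') is asserted, not proved: for a general positive genus-2 knot you have no control over the relatively graded $\ZZ[U]$-module structure of $H_*(A^+_i)$, which is precisely the data needed. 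Moreover, even granting that step, the finish ``$K$ is a genus-2 L-space knot with $\Delta_K=\Delta_{T(2,5)}$, hence $K=T(2,5)$'' invokes a classification of genus-2 L-space knots that is a deep theorem (Farber--Reinoso--Wang, via instanton Floer homology) proved years after this paper; an Alexander polynomial does not formally determine an L-space knot, so this step is not available as stated. (If you instead only concluded that $S^3_3(K)$ is an L-space, you would need the classification of L-space integer homology spheres, which is also not known.)

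The paper closes exactly this gap with input your proposal does not use: a positive genus-2 knot is quasi-alternating (Jong--Kishimoto), hence HFK-thin (Manolescu--Ozsv\'ath), so $\hfp(S^3_3(K))$ is computable in closed form from $\sigma(K)$ and the Alexander polynomial via Ozsv\'ath--Szab\'o's surgery formula for thin knots; one pins down $\sigma(K)=-4$ (positive non-pretzel knots have $\sigma\le -4$ by Przytycki--Taniyama, and pretzel/Montesinos knots already satisfy the conjecture), and one rules out $Y$ being an L-space by showing $K$ is not fibered (Cromwell's crossing-number bound for fibered homogeneous knots plus a finite check that $T_{2,5}$ is the only prime fibered positive genus-2 knot), so the summands $\ZZ^{b_i}$ are nonzero. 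The contradiction is then a relative-grading mismatch: in the $\spc$ structure $0$ the $\ZZ^{b_0}$ sits one grading below the bottom of the tower, while in the $\spc$ structure $1$ the $\ZZ^{b_1}$ sits level with it, so the two groups cannot both be isomorphic to $\hfp(Y)$ as relatively graded $\ZZ[U]$-modules, contradicting the K\"unneth formula. This is exactly the ``control of the reduced groups beyond their bottom gradings'' that you correctly identify as the obstacle; thinness is what supplies it, and without a substitute your proposal remains a sketch rather than a proof.
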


\begin{remark}
In practice, for most knots $\maxtb(K)$ is negative and thus Theorem~\ref{thm:main} does not apply.  However, large classes of knots do have $\maxtb(K) \geq 0$, such as strongly quasipositive knots \cite{rudolph-sliceness}, and so this shows that strongly quasipositive knots (among others) do not have negative reducible surgeries.
\end{remark}

Observe that in Theorem~\ref{thm:main}, we do not require that $K$ be a non-cable.  Further, since the maximum Thurston-Bennequin number of the unknot is $-1$, we do not need a non-triviality assumption either.  In light of Theorem~\ref{thm:main}, we make the following conjecture.

\begin{conjecture}\label{conj:legcablingconj}
Legendrian surgery on a knot in the tight contact structure on $S^3$ is never reducible.  
\end{conjecture}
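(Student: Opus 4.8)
The plan is to reduce the conjecture to Proposition~\ref{prop:decomp} and then to close the two cases that Theorem~\ref{thm:main} leaves open. A Legendrian representative $\Lambda$ of $K$ has $tb(\Lambda) \leq \maxtb(K)$, and Legendrian surgery on $\Lambda$ is topological surgery with coefficient $n = tb(\Lambda) - 1 \leq \maxtb(K) - 1 < \maxtb(K)$; thus every Legendrian surgery is a surgery of the type considered in Proposition~\ref{prop:decomp}. So if some Legendrian surgery were reducible we would obtain $S^3_n(K) = L(p,q) \# Y$ with $p = |n|$ and $n < -1$, where $L(p,q)$ bounds a simply-connected Stein filling $X_1$ of intersection form $\langle -p\rangle$ and $Y$ is an integer homology sphere bounding a contractible Stein filling $X_2$. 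Since Theorem~\ref{thm:main} already rules this out whenever $\maxtb(K) \geq 0$, the remaining content is (i) the case in which $K$ is a cable, and (ii) the case of a non-cable with $\maxtb(K) < 0$.

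For the cable case I would argue that the only reducible surgery on $K = C_{p,q}(K')$, which by Gordon--Luecke occurs at the integer slope $pq$, is never a Legendrian surgery. Concretely, the goal is to prove a bound of the form $\maxtb(C_{p,q}(K')) \leq pq$ for the relevant cables --- precisely the kind of cabling estimate advertised in the abstract --- so that the slope $pq \geq \maxtb(K)$ falls outside the Legendrian range $n < \maxtb(K)$. Such a bound should follow by combining the behaviour of the maximal Thurston--Bennequin invariant under cabling with the slice-Bennequin inequality applied to $K'$.

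For the non-cable case with $\maxtb(K) < 0$ I would exploit the two Stein fillings supplied by Proposition~\ref{prop:decomp}. The first step is to pin down $L(p,q)$ from the rank-one filling $X_1$: its intersection form $\langle -p\rangle$ fixes the linking form of the boundary up to squares and so constrains $q$ modulo $p$, and I would feed this into Lisca's classification of symplectic fillings of lens spaces together with the Honda--Giroux classification of tight contact structures to reduce $(L(p,q),\xi)$ to a short list. The second step is to play this against the contractible filling $X_2$ of $Y$: gluing $X_1 \natural X_2$ to the surgery trace produces a simply-connected $4$-manifold whose boundary is controlled by both the lens space and the knot, and I would extract a contradiction from the Heegaard Floer correction terms of $L(p,q) \# Y$ forced by the $\langle -p\rangle$ filling, compared against the $d$-invariants that an honest integral surgery on $K \subset S^3$ can realize through the mapping-cone formula.

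The hard part will be exactly the non-cable regime $\maxtb(K) < 0$, which is where the proof of Theorem~\ref{thm:main} breaks down: without $\maxtb(K) \geq 0$ one loses the slice-Bennequin positivity that tethers the surgery slope to the genus and keeps the filling and Floer inequalities sharp. Controlling the homology-sphere summand $Y$ is the crux, since a nontrivial $Y$ bounding a contractible Stein domain is not immediately excluded and its correction terms can conspire to cancel those of the lens space. I expect that settling this will require either new restrictions on contractible fillings of homology spheres arising as surgery summands, or a strengthening of the $d$-invariant obstructions that does not pass through the hypothesis $\maxtb(K) \geq 0$ --- which is precisely why the statement remains a conjecture.
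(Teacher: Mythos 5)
You are attempting to prove Conjecture~\ref{conj:legcablingconj}, which the paper explicitly does \emph{not} prove: it is stated as a conjecture, the authors say outright that they cannot establish it when $\maxtb(K)<0$, and the paper offers only the partial results of Theorem~\ref{thm:tb-negative-thm} and its corollaries. So there is no paper proof to compare against, and your proposal --- which, to your credit, concedes at the end that the argument does not close --- is a correct reduction followed by a plan whose two remaining steps each hit an obstruction the paper itself records. The reduction is fine: every Legendrian surgery has slope $n \le \maxtb(K)-1 < \maxtb(K)$, so Proposition~\ref{prop:decomp} applies, and Theorem~\ref{thm:main} disposes of all knots (cables included) with $\maxtb(K) \ge 0$.

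The concrete gaps are these. For the cable case, the bound $\maxtb(C_{p,q}(K')) \le pq$ is exactly Corollary~\ref{cor:maxtbcables}, but it is proved only for $q \neq -1$, and the remark following that corollary notes the bound is literally false for $q=-1$; moreover slice-Bennequin bounds $\maxtb$ by $2g_s-1$, which bears no relation to the cabling slope $-p$ of $C_{p,-1}(K')$. The paper observes that extending the corollary to $q=-1$ is equivalent to excluding conclusion~\eqref{i:tb-negative-lp1} of Theorem~\ref{thm:tb-negative-thm}, since $S^3_{-p}(C_{p,-1}(K)) = S^3_{-p}(U) \# S^3_{-1/p}(K)$ --- so your cable case and your non-cable case collapse into the same open problem. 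For that problem, your $d$-invariant/mapping-cone strategy cannot succeed as stated: the paper exhibits honest reducible integral surgeries of precisely the shape $S^3_{-p}(U)\#Y$ with $Y$ bounding a contractible (even smoothly contractible) Stein manifold, e.g.\ $S^3_{-2}(C_{2,-1}(T_{2,3})) = S^3_{-2}(U) \# \Sigma(2,3,13)$, so the correction terms of such connected sums are genuinely realized by surgery on a knot in $S^3$ and the mapping-cone formula yields no contradiction. Any viable obstruction must detect the contact-geometric hypothesis $n<\maxtb(K)$, which $d$-invariants do not; and for $L(p,1)$ the $d_3$-invariants of its tight contact structures coincide with those induced by stabilized Legendrian unknots, which is exactly why the paper's own contact-geometric counting in Sections~\ref{sec:min-d3} and \ref{sec:negative-tb} also fails to exclude case~\eqref{i:tb-negative-lp1} (and the residual trefoil case~\eqref{i:tb-negative-lp4}, which your plan would also need to eliminate). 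As it stands, your proposal reproves the known reductions and leaves open precisely what the paper leaves open.
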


\subsection{Knots with $\maxtb(K) < 0$}
While we are not able to prove Conjecture~\ref{conj:legcablingconj} for knots with $\maxtb(K) < 0$, we are able to establish some partial results such as the following.

\begin{theorem}
\label{thm:tb-negative-thm}
Let $K$ be a knot in $S^3$ with $\maxtb(K) < 0$.  If $S^3_n(K)$ is reducible for some $n < \maxtb(K)$, and $W$ is the trace of this surgery, then at least one of the following must hold:
\begin{enumerate}
\item \label{i:tb-negative-lp1} $S^3_n(K) = S^3_n(U) \# Y$.  If this is the case then $W$ is necessarily diffeomorphic to $D_n \natural Z$, where $D_n$ is the disk bundle over $S^2$ with Euler number $n$.

\item \label{i:tb-negative-lp4} $\maxtb(K)=-6$, $n=-7$, and $S^3_{-7}(K) = S^3_{-7}(T_{2,-3}) \# Y$ where $T_{2,-3}$ is the left-handed trefoil.  Moreover, $W$ is diffeomorphic to $X \natural Z$ where $X$ is the trace of $-7$-surgery on $T_{2,-3}$.

\item \label{i:tb-negative-general} $n \geq 4\lfloor\frac{\maxtb(K)}{2}\rfloor + 6$.
\end{enumerate}
In each of the first two cases, $Y$ is an irreducible homology sphere bounding the contractible Stein manifold $Z$.
\end{theorem}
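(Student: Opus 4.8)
The plan is to run the trace $W$ of the surgery through Eliashberg's decomposition and reduce everything to a classification of rank-one Stein fillings of lens spaces. Since $n < \maxtb(K) < 0$ we have $n \le -2$ and $p := |n| \ge 2$, and because $S^3_n(K)$ is reducible, Gordon--Luecke produces a lens space summand $S^3_n(K) = L(p,q)\#Y$. Taking a Legendrian representative $K_0$ of $K$ with $tb = n+1$ and performing Legendrian surgery realizes $W$ as a Stein filling of the induced tight contact structure $\xi$, which is a contact connected sum $\xi_1\#\xi_2$. Proposition~\ref{prop:decomp}, which is built on Theorem~\ref{thm:steinboundary-intro}, then gives $p = |n|$, identifies $Y$ as an irreducible homology sphere with a contractible Stein filling, and splits $(W,J) = (W_1,J_1)\natural(W_2,J_2)$ with $W_1$ a simply-connected Stein filling of $L(p,q)$ of intersection form $\langle -p\rangle$. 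As this form has rank one and $b_2(W)=1$, the complementary summand $W_2 =: Z$ has $b_2 = 0$ and is the advertised contractible filling of $Y$.

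The next step is to read off the shape of $W_1$. A simply-connected Stein filling with $b_2 = 1$ may be built from a single Stein $2$-handle over $B^4$ (its $1$-handles cancelling), so $W_1$ is the trace of $(-p)$-surgery on a knot $L\subset S^3$ attached along a Legendrian representative $L_0$, whence $S^3_{-p}(L) = L(p,q)$ and $\maxtb(L)\ge tb(L_0) = 1-p$. The generator of $H_2(W_1)$ is the capped Seifert surface of $L_0$, of genus $g(L)$, self-intersection $n$, and $c_1(J_1)$-evaluation $rot(L_0)$. Because $(W,J) = (W_1,J_1)\natural(W_2,J_2)$, this class also generates $H_2(W)$, where it is represented by the capped Seifert surface of $K_0$; evaluating $c_1(J)$ on it in both descriptions yields the crucial identity $|rot(K_0)| = |rot(L_0)|$, linking the Legendrian data of $K$ to that of $L$.

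With this in hand I would classify the knot $L$. If $g(L)=0$ then $L$ is the unknot, $L(p,q)=S^3_n(U)$ and $W_1=D_n$, which is conclusion~\ref{i:tb-negative-lp1}. The whole point is therefore to show that a genus-$\ge 1$ filling knot $L$ can occur only in the two tightly constrained situations of conclusions~\ref{i:tb-negative-lp4} and~\ref{i:tb-negative-general}. To this end I would exploit that a knot with a negative lens-space surgery has, as its mirror, an L-space knot -- hence fibered, strongly quasipositive, with $\maxtb(\overline L)=2g(L)-1$ and so $\maxtb(L)\le -2g(L)-1$ -- and feed this, together with the realizability $\maxtb(L)\ge 1-p$, the adjunction inequalities $n+|rot(L_0)|\le 2g(L)-2$ and $n+|rot(K_0)|\le 2g(K)-2$, and the identity $|rot(K_0)|=|rot(L_0)|$, into a single system. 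The left-handed trefoil $T_{2,-3}$ is the minimal genus-one solution, realizable only at slope $-7$ with $\maxtb=-6$ (conclusion~\ref{i:tb-negative-lp4}); outside it the same system should force $p\le -4\lfloor\maxtb(K)/2\rfloor-6$, which is precisely conclusion~\ref{i:tb-negative-general} after substituting $n=-p$.

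I expect the decisive obstacle to be this final classification, and it bites hardest when $|\maxtb(K)|$ is small: there conclusion~\ref{i:tb-negative-general} is vacuous, so one must actually \emph{force} the filling to be the disk bundle $D_n$ and the lens space to be $S^3_n(U)$, ruling out every higher-genus filling knot outright. The natural instrument is a Lisca-type lattice embedding -- gluing $W_1$ to the canonical negative-definite plumbing bounding $-L(p,q)$ and applying Donaldson's diagonalization theorem to constrain $(p,q)$ -- run in tandem with the adjunction and stabilization bounds that track $rot(L_0)$, $g(L)$ and $\maxtb(K)$. Carrying out this bookkeeping, isolating $T_{2,-3}$ as the unique sporadic filling, and pinning down the exact arithmetic of the threshold $4\lfloor\maxtb(K)/2\rfloor+6$ -- including the boundary behaviour that forces $\maxtb(K)=-6$ and $n=-7$ in the exceptional case -- is where essentially all of the difficulty resides.
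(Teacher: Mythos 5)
Your reduction to Proposition~\ref{prop:decomp} matches the paper, but the pivot of your argument---``a simply-connected Stein filling with $b_2=1$ may be built from a single Stein $2$-handle over $B^4$ (its $1$-handles cancelling), so $W_1$ is the trace of $(-p)$-surgery on a knot $L\subset S^3$''---is a genuine gap. Theorem~\ref{thm:steinfill} says a Stein domain admits \emph{some} handle decomposition with $0$-, $1$-, and $2$-handles, but nothing forces the $1$-handles to cancel geometrically: a simply-connected filling with $b_2=1$ could require $k$ $1$-handles and $k+1$ $2$-handles attached along a Legendrian link in $\#^k(S^1\times S^2)$, and there is no known way to trade this for a single handle along a knot in $S^3$. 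This is exactly why the paper never tries to realize $W_1$ as a knot trace from first principles; instead it identifies $W_1$ through the classification of symplectic fillings of lens spaces (McDuff, Hind, Plamenevskaya--Van Horn-Morris for $L(p,1)$; Lisca's lists $W_{p,q}(n_1,\dots,n_m)$ for $L(p,4)$, $L(p,2)$, $L(8,3)$, as in Lemma~\ref{lem:fillings-L-p-4} and Propositions~\ref{prop:possible-tb-Lp2} and \ref{prop:p-q-3-n-2}), where the constraint ``intersection form $\langle -p\rangle$'' singles out the trace only after the classification is invoked. Your fallback instrument, a Lisca-type lattice embedding via Donaldson diagonalization, constrains $(p,q)$ but cannot by itself recover the filling's diffeomorphism type, which is what conclusions~\eqref{i:tb-negative-lp1} and \eqref{i:tb-negative-lp4} assert.

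A second gap: even granting that $W_1$ is a trace of $-p$-surgery on $L$ with $S^3_{-p}(L)=L(p,q)$, you invoke $\maxtb(\overline{L})=2g(\overline{L})-1$ for the L-space knot $\overline{L}$. That statement is precisely Conjecture~\ref{conj:maxtblspace} of this paper---open, with only partial evidence (Propositions~\ref{prop:berge-tb} and \ref{prop:fibered-2g-1-cables})---so your system of adjunction inequalities rests on an unproven input. Finally, your proposal contains no mechanism for the boundary arithmetic at $p=2t-3$: the paper derives the threshold $4\lfloor\maxtb(K)/2\rfloor+6$ not from adjunction but from a count of rotation numbers realized by stabilizations (at least $p-t$ values at $tb=1-p$) played against $d_3$-invariant bounds ($d_3(\xi)\geq d_3(\xican)$, Proposition~\ref{prop:d3-xican-minimal}, and the recursion of Proposition~\ref{prop:f-recurrence-eqn}) together with the Giroux--Honda count of tight structures; and the forcing of $(\maxtb(K),n)=(-6,-7)$ in the exceptional case uses vanishing of symplectic homology for traces of surgery on \emph{stabilized} Legendrians (Bourgeois--Ekholm--Eliashberg) against nonvanishing for the trefoil trace detected by an ungraded augmentation. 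Nothing in your adjunction-plus-lattice scheme can see the stabilized/non-stabilized distinction, which is the decisive point there.
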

\begin{remark}
In case~\eqref{i:tb-negative-lp4} above, we recall that Moser \cite{moser} showed that $S^3_{-7}(T_{2,-3})$ is in fact the lens space $S^3_{-7/4}(U)$.
\end{remark}

\begin{remark}
If the trace $W$ of a reducible $n$-surgery on $K$ has the form $D_{n} \natural Z$, then the generator of $H_2(W)\cong \ZZ$ is represented by a smoothly embedded sphere even if $K$ is not smoothly slice.  
\end{remark}

\begin{corollary}
\label{cor:tb-at-least--4}
If $-8 \leq \maxtb(K) \leq -1$, then any reducible surgery on $K$ with coefficient $n < \maxtb(K)$ has the form $S^3_n(K) = S^3_n(U) \# Y$, except possibly when $\maxtb(K)=-6$ and $S^3_{-7}(K) = S^3_{-7}(T_{2,-3}) \# Y$.  In both cases $Y$ is an irreducible homology sphere which bounds a contractible Stein manifold.
\end{corollary}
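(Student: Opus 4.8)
The plan is to derive the corollary from Theorem~\ref{thm:tb-negative-thm} by a finite case analysis on $t := \maxtb(K)$, using that a reducible surgery with $n < \maxtb(K)$ has $n \le t-1$. The theorem guarantees that one of its conclusions~\eqref{i:tb-negative-lp1}, \eqref{i:tb-negative-lp4}, \eqref{i:tb-negative-general} holds; since the corollary is exactly the assertion that~\eqref{i:tb-negative-lp1} holds (with~\eqref{i:tb-negative-lp4} permitted only when $t=-6$), the entire problem is to show that the numerical conclusion~\eqref{i:tb-negative-general} cannot strand us outside these two structured cases when $-8 \le t \le -1$.

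First I would dispose of the values of $t$ for which~\eqref{i:tb-negative-general} is vacuous. Setting $b(t) := 4\lfloor t/2\rfloor + 6$, conclusion~\eqref{i:tb-negative-general} together with $n \le t-1$ can hold only if $b(t) \le t-1$. Since $b(t) = 2t+6$ for $t$ even and $b(t) = 2t+4$ for $t$ odd, the inequality $b(t)\le t-1$ reduces to $t \le -7$ (even case) or $t \le -5$ (odd case), so it fails exactly for $t \in \{-1,-2,-3,-4,-6\}$ in our range. For these five values conclusion~\eqref{i:tb-negative-general} is impossible, hence~\eqref{i:tb-negative-lp1} or~\eqref{i:tb-negative-lp4} holds; as~\eqref{i:tb-negative-lp4} forces $t=-6$, this already yields the corollary away from the residual values $t \in \{-5,-7,-8\}$, with the single exception recorded at $t=-6$.

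For the residual values, conclusion~\eqref{i:tb-negative-general} leaves only the windows $(t,n)=(-5,-6)$, or $t=-7$ with $n\in\{-8,-9,-10\}$, or $t=-8$ with $n\in\{-9,-10\}$, so that $p := |n| \in \{6,8,9,10\}$. Here I would feed these cases back into Proposition~\ref{prop:decomp}: the lens space summand is $L(p,q)$ and it bounds a simply-connected Stein filling with intersection form $\langle n\rangle = \langle -p\rangle$, that is, a filling built from a single $2$-handle and hence the trace of an integer $n$-surgery on some knot $J \subset S^3$. Such a filling is either the disk bundle $D_n$ (the case $J = U$), in which case $L(p,q) = S^3_n(U)$ and we land in conclusion~\eqref{i:tb-negative-lp1}, or else $J$ is nontrivial and carries a lens space surgery, and Steinness of that trace requires $n \le \maxtb(J) - 1$. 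Using the classification of Stein fillings of lens spaces underlying the proof of Theorem~\ref{thm:tb-negative-thm}, I would check that for $p \in \{6,8,10\}$ no nontrivial knot yields the relevant lens space by a Stein $n$-surgery trace, while the only exotic candidate at $p=9$ is $T_{2,-5}$, whose $(-9)$-surgery trace fails to be Stein because $-9 > \maxtb(T_{2,-5}) - 1 = -11$. Thus the exotic option is excluded, the summand is the disk bundle, and conclusion~\eqref{i:tb-negative-lp1} holds.

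The main obstacle is this last step: ruling out exotic single $2$-handle Stein fillings of lens spaces at the small orders $p \in \{6,8,9,10\}$. This is precisely where the Thurston--Bennequin bound on Steinness does the work, and it also explains why $t=-6$ with the trefoil slope $n=-7$ survives as the lone exception, since there $p=7$ and $-7 = \maxtb(T_{2,-3}) - 1$ makes the trefoil trace Stein. Everything preceding it is arithmetic that confines the difficulty to these four orders.
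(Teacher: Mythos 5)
Your opening arithmetic is fine and matches the spirit of the paper's (much shorter) reduction: the paper simply observes that $\maxtb(K)\geq -8$ and case~\eqref{i:tb-negative-general} force $n\geq 4\lfloor -8/2\rfloor+6=-10$, and later checks that an order-$7$ lens space summand cannot come from case~\eqref{i:tb-negative-general} because $-7\geq 4\lfloor\maxtb(K)/2\rfloor+6$ would give $\maxtb(K)\leq -7$, contradicting $n<\maxtb(K)$. The genuine gap is in your treatment of the residual orders $p\in\{6,8,9,10\}$, and it is twofold. First, Proposition~\ref{prop:decomp} hands you a simply-connected Stein filling of $(L(p,q),\xi_1)$ with $b_2=1$ and intersection form $\langle -p\rangle$; it does \emph{not} follow that this filling is the trace of an integer surgery on a knot $J\subset S^3$. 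A Stein domain has a handle decomposition with handles of index at most $2$, but it may have $1$-handles (with one more $2$-handle than $1$-handles), so your identification of the filling with a knot trace is unjustified. Second, and more seriously, your exclusion of the ``exotic'' candidates invokes the principle that if the trace of $n$-surgery on $J$ admits \emph{some} Stein structure then $n\leq\maxtb(J)-1$. Theorem~\ref{thm:steinfill} is a sufficient condition for Steinness phrased in terms of a particular Legendrian handle presentation; its converse --- that an arbitrary Stein structure on a knot trace forces the framing below $\maxtb$ --- is not proved in this paper and is not an available general fact. The adjunction inequality for Stein domains only bounds the framing in terms of genus: for $T_{2,-5}$ it gives $n\leq 2g-2=2$, which says nothing about $n=-9$, so your claimed contradiction ``$-9>\maxtb(T_{2,-5})-1=-11$'' has no proof behind it.

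The paper's actual mechanism for these small orders is different and is the content of Remark~\ref{rem:small-lens-spaces}, which rests on the machinery of Section~\ref{sec:min-d3}: since $\xican$ minimizes $d_3$ among tight structures (Proposition~\ref{prop:d3-xican-minimal}) and any lens space summand of a reducible Legendrian surgery carries a tight structure with $d_3=-\frac{r^2+p}{4p}\leq -\frac14$ (Remark~\ref{rmk:large-d3-not-reducible}), computing $d_3(\xican)>-\frac14$ eliminates $L(p,p-1)$ for $p>2$ as well as $L(7,3)$, $L(8,5)$, $L(9,4)$, and $L(10,3)$; the remaining cases $L(p,2)$ with $p\neq 7$ (which covers $L(9,2)\cong L(9,5)$) and $L(8,3)$ are eliminated in Propositions~\ref{prop:possible-tb-Lp2} and~\ref{prop:p-q-3-n-2} using Lisca's classification of fillings, since no filling has intersection form $\langle -p\rangle$. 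Up to homeomorphism this leaves only $L(p,1)$ and $L(7,2)\cong L(7,4)$ among orders at most $10$, after which the two-line arithmetic above pins the exception to case~\eqref{i:tb-negative-lp4} with $\maxtb(K)=-6$. If you want to salvage your outline, you should replace your knot-trace classification step with these $d_3$ computations and filling classifications; as written, that step rests on an open question rather than on anything established in the paper.
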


\begin{proof}
Suppose that the lens space summand of the reducible surgery is not $S^3_n(U)$.  Theorem~\ref{thm:tb-negative-thm} says that either $\maxtb(K)=-6$ and $n=-7$, or since $\maxtb(K) \geq -8$ we have $n \geq -10$.  We will see (Remark~\ref{rem:small-lens-spaces}) that this forces the lens space summand $L(|n|,q)$ to be $S^3_{-7}(T_{2,-3}) \cong S^3_{-7/4}(U)$.  If this lens space arises from case~\eqref{i:tb-negative-general} of Theorem~\ref{thm:tb-negative-thm}, then we have $n=-7 \geq 4\lfloor\frac{\maxtb(K)}{2}\rfloor + 6$, hence $\maxtb(K) \leq -7$, contradicting the assumption that $n < \maxtb(K)$.  Thus it can only arise from case~\eqref{i:tb-negative-lp4}, in which case $\maxtb(K)=-6$.
\end{proof}


We cannot guarantee that there do not exist negative reducible surgeries of slope at least $\maxtb(K)$ which satisfy one of the conclusions of Theorem~\ref{thm:tb-negative-thm}: for example, if $K$ is the $(2,-1)$-cable of the right handed trefoil $T_{2,3}$ then $\maxtb(K)=-2$ and $S^3_{-2}(K) = S^3_{-2}(U) \# S^3_{-1/2}(T_{2,3}) = S^3_{-2}(U) \# \Sigma(2,3,13)$.  Note that $\Sigma(2,3,13)$ even bounds a smoothly contractible 4-manifold, as shown by Akbulut and Kirby \cite{akbulut-kirby}.

\begin{corollary}
\label{cor:tb-negative-1-arf}
Suppose that $\frac{\Delta''_K(1)}{2}$ is odd and $\maxtb(K) < 0$.  If $S^3_n(K)$ is reducible for some $n < \maxtb(K)$, then either $(\maxtb(K),n)=(-6,-7)$ or $n \geq 4\lfloor\frac{\maxtb(K)}{2}\rfloor + 6$.
\end{corollary}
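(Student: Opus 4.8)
The plan is to apply Theorem~\ref{thm:tb-negative-thm} and then use the hypothesis on $\Delta''_K(1)$ to eliminate its first conclusion, leaving exactly the two cases claimed by the corollary. First I would recall the classical identity $\frac{\Delta''_K(1)}{2} \equiv \operatorname{Arf}(K) \pmod 2$ (the coefficient $\frac{1}{2}\Delta''_K(1)$ is the $z^2$-coefficient of the Conway polynomial, which reduces to the Arf invariant mod $2$), so that the hypothesis is precisely the statement $\operatorname{Arf}(K) = 1$. Applying Theorem~\ref{thm:tb-negative-thm} to the reducible surgery $S^3_n(K)$ with $n < \maxtb(K)$, we land in one of the three listed cases. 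Cases~\eqref{i:tb-negative-lp4} and~\eqref{i:tb-negative-general} yield exactly $(\maxtb(K),n) = (-6,-7)$ and $n \geq 4\lfloor\frac{\maxtb(K)}{2}\rfloor+6$, respectively, so it suffices to rule out case~\eqref{i:tb-negative-lp1}.

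So suppose we were in case~\eqref{i:tb-negative-lp1}, in which $W \cong D_n \natural Z$. By the Remark following the theorem, the generator $G$ of $H_2(W) \cong \ZZ$, which satisfies $G^2 = n$, is represented by a smoothly embedded sphere $S$. On the other hand, $G$ is also represented by the surface $\hat F$ obtained by pushing a Seifert surface for $K$ into $B^4$ and capping it off with the core of the $2$-handle of $W$, a surface of genus $g(K)$. The key point is that the Arf invariant of a surface representing $G$, computed via the Guillou--Marin formula, depends only on the class $G$, the manifold $W$, and the spin structure on $\partial W$, and not on the chosen representative. Since $S$ is a sphere it carries the trivial quadratic form and so has Arf invariant $0$, whereas the Guillou--Marin Arf invariant of $\hat F$ is computed from the mod-$2$ Seifert form of $K$ and equals $\operatorname{Arf}(K)$ (this is essentially Robertello's theorem). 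Equating the two forces $\operatorname{Arf}(K) = 0$, contradicting $\operatorname{Arf}(K)=1$; hence case~\eqref{i:tb-negative-lp1} cannot occur.

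The main obstacle is making this invariance step precise, and in particular handling the parity of $n$. When $n$ is odd, the class $G$ is characteristic in $W$ and the Guillou--Marin formula $\sigma(W) - G^2 \equiv 8\operatorname{Arf}(W,F) + 8\mu(\partial W) \pmod{16}$ applies directly to both $S$ and $\hat F$; since $\sigma(W)$, $G^2 = n$, and the boundary Rokhlin term $\mu(\partial W)$ are the same for both representatives, their Arf invariants must agree. When $n$ is even, $G$ is no longer characteristic, but then the intersection form $\langle n \rangle$ is even and $W$ is spin, so I would instead invoke the spin refinement of Rokhlin's theorem for surfaces in spin $4$-manifolds (Freedman--Kirby), which again gives representative-independence of the relevant Arf invariant. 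Checking that the induced boundary spin structure is treated consistently in both parities, and that the Seifert-surface representative genuinely computes $\operatorname{Arf}(K)$, is the only delicate bookkeeping; everything else is a direct application of Theorem~\ref{thm:tb-negative-thm}.
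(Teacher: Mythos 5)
Your strategy is sound, and it is genuinely different from the paper's. The paper rules out case (1) of Theorem~\ref{thm:tb-negative-thm} purely three-dimensionally: it applies the Boyer--Lines surgery formula for the Casson--Walker invariant, $\lambda(S^3_n(K)) - \lambda(S^3_n(U)) = \frac{1}{n}\frac{\Delta''_K(1)}{2}$, uses additivity to identify the left side with $\lambda(Y)$, and observes that $\lambda(Y)$ is even because $Y$ bounds a smoothly contractible manifold and hence has vanishing Rokhlin invariant. Note that this yields the stronger divisibility $\frac{\Delta''_K(1)}{2} \in 2n\ZZ$, of which the corollary uses only the parity; it also never needs the diffeomorphism $W \cong D_n \natural Z$, only the homological conclusion $S^3_n(K)=S^3_n(U)\#Y$ together with $\mu(Y)=0$. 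You instead exploit the four-dimensional conclusion of case (1): the generator of $H_2(W)$ is represented both by a smooth sphere and by the capped-off Seifert surface, so $K$ is in effect $n$-shake slice, and shake slice knots have $\operatorname{Arf}(K)=0$ --- a true classical fact (Robertello for the foundational case; see also Saeki, and the necessity direction of the sphere-embedding characterization of Feller--Miller--Nagel--Orson--Powell--Ray), which contradicts $\operatorname{Arf}(K)\equiv\frac{\Delta''_K(1)}{2}\equiv 1 \pmod 2$. What each buys: the paper's route is shorter, uniform in the parity of $n$, and extracts more information; yours is more elementary in its ingredients (no Casson--Walker theory) at the cost of reproving a special case of the shake-slice Arf obstruction.

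One point in your sketch deserves repair. For even $n$ your appeal to ``$W$ is spin, so invoke Freedman--Kirby'' is not quite the right mechanism: the issue is not the characteristicity of $G$ --- one checks that $G$ is characteristic for the mod-$2$ intersection pairing, equivalently its image in $H_2(W,\partial W;\Z/2)$ is dual to $w_2(W)$, for \emph{both} parities of $n$, so the Guillou--Marin membrane form is defined in either case. The genuine delicacy is the boundary term: for odd $n$ the boundary $S^3_n(K)$ is a $\Z/2$-homology sphere with a unique spin structure, so $\mu(\partial W)$ is canonical and representative-independence of $\operatorname{Arf}(W,F)$ is immediate from the congruence; for even $n$ the boundary has $H_1 \cong \Z/n$ with two spin structures, and one must verify that the sphere and the capped Seifert surface induce the \emph{same} spin structure on $\partial W$ (it is determined by the relative class dual to $[F]$, but this needs to be said). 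Closing up naively does not help --- in the double of $W$ the class $G$ is no longer characteristic --- so the bounded Guillou--Marin formula with explicit spin bookkeeping, as in the references above, is really needed. Since you flagged exactly this step and the needed statement is classical and true, I regard your proof as correct in outline, with that verification to be written out.
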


\begin{proof}
Suppose $S^3_n(K)$ is a reducible Legendrian surgery and $n < 4\lfloor\frac{\maxtb(K)}{2}\rfloor + 6$ but $(\maxtb(K),n) \neq (-6,-7)$. Theorem~\ref{thm:tb-negative-thm} says that $S^3_n(K) = S^3_n(U) \# Y$, where $Y$ is a homology sphere which bounds a contractible Stein manifold.  The surgery formula for the Casson-Walker invariant $\lambda$, as stated by Boyer--Lines \cite{boyer-lines} (see also Walker \cite{walker}), implies that
\[ \lambda(S^3_n(K)) - \lambda(S^3_n(U)) = \frac{1}{n}\frac{\Delta''_K(1)}{2}. \]
The Casson-Walker invariant is additive under connected sums with homology spheres, so the left side is equal to $\lambda(Y)$, which is an even integer since $Y$ bounds a smoothly contractible manifold and thus has vanishing Rokhlin invariant \cite{akbulut-mccarthy}.  We conclude that $\frac{\Delta''_K(1)}{2} \in 2n\ZZ$, which is impossible since $\frac{\Delta''_K(1)}{2}$ is odd by assumption.
\end{proof}

\begin{remark}
The requirement that $n<\maxtb(K)$ is necessary in order to rule out $S^3_n(U)$ summands: if $K$ is the $(3,-1)$-cable of the right handed trefoil $T_{2,3}$, then $\maxtb(K) = -3$ by \cite[Theorem~1.7]{etnyre-lafountain-tosun}, and $S^3_{-3}(K) = S^3_{-3}(U)\# S^3_{-1/3}(T_{2,3})$ is reducible but $\frac{\Delta''_K(1)}{2} = 9$ is odd.  (In this case we would have $Y= \Sigma(2,3,19)$, and so $\lambda(Y)$ is odd.)
\end{remark}

\subsection{Maximum Thurston-Bennequin numbers for cables}
Combining Theorem~\ref{thm:main} with the fact that cables have reducible surgeries, we are also able to say something about the maximum Thurston-Bennequin numbers of cables, cf.\ \cite{etnyre-honda-cabling, etnyre-lafountain-tosun, tosun}; this technique was originally used by Etnyre--Honda \cite[Lemma~4.9]{etnyre-honda-knots} to compute  $\maxtb$ for negative torus knots.  Let $C_{p,q}(K)$ denote the $(p,q)$-cable of $K$, and note that for nontrivial cables we can assume that $p \geq 2$ since $C_{p,q}(K)=C_{-p,-q}(K)$ up to orientation.

\begin{corollary}\label{cor:maxtbcables}
Suppose that $p\geq 2$ and $\gcd(p,q)=1$, and assume that $q\neq -1$.
\begin{itemize}
\item If $q < p\cdot\maxtb(K)$, then $\maxtb(C_{p,q}(K)) = pq$.
\item If $q > p\cdot\maxtb(K)$, then $pq - (q - p\cdot \maxtb(K)) \leq \maxtb(C_{p,q}(K)) \leq pq$.
\end{itemize}
\end{corollary}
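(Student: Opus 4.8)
The plan is to bound $\maxtb(C_{p,q}(K))$ from above and below by independent arguments and then combine them, writing $J := C_{p,q}(K)$ and $T := \maxtb(K)$ throughout. The single fact feeding both halves is that the cable carries a reducible surgery: by the formula from the introduction, $S^3_{pq}(J) = S^3_{p/q}(U) \# S^3_{q/p}(K)$, whose first summand is a lens space of order $p \ge 2$ and whose second summand has first homology of order $|q|$. The two cases of the corollary are precisely the comparison of the cable slope $q/p$ with $T$, and I expect the upper bound $\maxtb(J) \le pq$ to hold uniformly while the lower bounds come from two different Legendrian constructions.

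For the upper bound I would show $pq \ge \maxtb(J)$ by splitting on the sign of $\maxtb(J)$ and of $pq$. If $\maxtb(J) \ge 0$, Theorem~\ref{thm:main} says every surgery of coefficient $< \maxtb(J)$ is irreducible, so the reducible surgery $S^3_{pq}(J)$ forces $pq \ge \maxtb(J)$. If $\maxtb(J) < 0 \le pq$ the inequality is trivial. The remaining case $\maxtb(J) < 0$ and $pq < 0$ is the crux: since $p \ge 2$ we get $q < 0$, and $q \ne -1$ gives $|q| \ge 2$. Assuming for contradiction that $pq < \maxtb(J)$, the surgery is a Legendrian surgery, so Proposition~\ref{prop:decomp} applies and forces $S^3_{pq}(J)$ to have a single prime summand with nontrivial first homology---a lens space whose complementary summand is an integer homology sphere. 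But the decomposition $S^3_{p/q}(U) \# S^3_{q/p}(K)$ exhibits two prime summands with nontrivial first homology, of orders $p \ge 2$ and $|q| \ge 2$, a contradiction. Hence $pq \ge \maxtb(J)$ in every case. This is also exactly where the hypothesis $q \ne -1$ is needed, since for $|q| = 1$ the lens space $S^3_{p/q}(U)$ already carries all of the first homology and Proposition~\ref{prop:decomp} yields no contradiction.

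For the lower bounds I would start from a Legendrian representative $L$ of $K$ with $tb(L) = T$ and use convex surface theory in a tubular neighborhood of $K$. On any convex torus bounding such a neighborhood the surface framing of the $(p,q)$-curve equals $pq$, and a Legendrian curve $C$ realized on it satisfies $tb(C) = pq - \tfrac12 \#(C \cap \Gamma)$, where $\Gamma$ is the dividing set. When $q > pT$ I would take the convex boundary $\partial N(L)$, whose dividing slope is $T$, and realize the cable as Legendrian ruling curves; then $\#(C \cap \Gamma) = 2|q - pT| = 2(q - pT)$, giving $tb(C) = pq - (q - pT)$, the asserted lower bound. When $q < pT$, Honda's classification of tight contact structures on the solid torus lets me thicken $L$ to find a boundary-parallel convex torus of dividing slope exactly $q/p$; realizing the cable as Legendrian divides, which are parallel to $\Gamma$, gives $\#(C \cap \Gamma) = 0$ and hence $tb(C) = pq$. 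Combining with the upper bound yields the equality $\maxtb(J) = pq$ when $q < pT$ and the two-sided estimate when $q > pT$.

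The main obstacle is the upper bound in the negative-cable regime, where Theorem~\ref{thm:main} does not apply and one must instead read off the structure of the reducible surgery from Proposition~\ref{prop:decomp} (ultimately from the Stein-filling decomposition of Theorem~\ref{thm:steinboundary-intro}); this is the Etnyre--Honda technique of \cite[Lemma~4.9]{etnyre-honda-knots}, now available in this generality. A secondary point, routine but requiring care, is to pin down which dividing slopes are realizable by boundary-parallel convex tori in the complement of $K$, so that the threshold separating the two constructions is exactly $q/p = T$; here I would follow the cabling analyses of \cite{etnyre-honda-cabling, etnyre-lafountain-tosun, tosun}.
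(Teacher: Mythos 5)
Your proposal is correct, and while your upper bound coincides with the paper's argument, your lower bound takes a genuinely different route. For $\maxtb(C_{p,q}(K)) \leq pq$ you argue exactly as the paper does: Theorem~\ref{thm:main} in the nonnegative regime, and for $q \leq -2$ the observation that a Legendrian $pq$-surgery would force, via Proposition~\ref{prop:decomp}, the complementary summand $S^3_{q/p}(K)$ to be an integer homology sphere even though $|H_1| = |q| \geq 2$; the paper splits on the sign of $q$ rather than on the sign of $\maxtb(C_{p,q}(K))$, and it explicitly flags the degenerate possibility $S^3_{q/p}(K) = S^3$ (forcing $K=U$, $q=1$), which your case division handles only implicitly, since then $C_{p,1}(U)=U$ has $\maxtb = -1 < 0$ and falls into your trivial case. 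For the lower bounds the paper is purely diagrammatic: it builds the Legendrian $p$-copy of a $tb$-maximizing front, computes $tb = p^2\cdot\maxtb(K)$ from the writhe and cusp count, and inserts negative $\frac{1}{p}$-twists (each contributing $-p$) or positive $\frac{1}{p}$-twists (each contributing $p-1$) to realize $tb = pq$ or $tb = pq - (q - p\cdot\maxtb(K))$ respectively. You instead use convex surface theory: ruling curves on $\partial N(L)$, whose dividing slope is $\maxtb(K)$, give $tb = pq - \tfrac{1}{2}\#(C\cap\Gamma) = pq - (q - p\cdot\maxtb(K))$ when $q > p\cdot\maxtb(K)$, and Legendrian divides on a convex torus of dividing slope exactly $q/p$ give $tb = pq$ when $q < p\cdot\maxtb(K)$. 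This is sound --- it is precisely the Etnyre--Honda cabling machinery --- with two small corrections: the slope-$q/p$ torus is found by \emph{shrinking} inside the standard neighborhood (Honda's solid-torus classification supplies boundary-parallel convex tori of every slope on the meridional side of $\maxtb(K)$), not by thickening, which is fortunate since thickening can fail for knots that are not uniformly thick; and your worry about pinning down the threshold slope is moot, because $\gcd(p,q)=1$ and $p \geq 2$ already exclude $q = p\cdot\maxtb(K)$. The trade-off is that the paper's front construction is elementary and self-contained, whereas your convex-torus argument imports Honda's classification but explains conceptually why the two regimes are separated by the dividing slope of the standard neighborhood.
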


\begin{proof}
Letting $K'=C_{p,q}(K)$, we first prove that $\maxtb(K') \leq pq$.  We suppose for contradiction that $\maxtb(K') > pq$.  Recalling that $pq$-surgery on $K'$ yields $S^3_{p/q}(U) \# S^3_{q/p}(K)$, we note that if $q>0$ then $\maxtb(K') > pq > 0$ and so this is ruled out by Theorem~\ref{thm:main} (unless $S^3_{q/p}(K)=S^3$, in which case $K$ is the unknot and $q=1$ \cite{gordon-luecke-lens}, hence $\maxtb(K')=-1 < pq$ anyway); and if $q<-1$ then this contradicts Proposition~\ref{prop:decomp}, since $S^3_{q/p}(K)$ is not a homology sphere.  Thus $\maxtb(K') \leq pq$ as long as $q\neq -1$.

Given a $tb$-maximizing front diagram for $K$, it is not hard to construct a front for $K'$ by taking $p$ copies of this front, each one shifted off the preceding one by a small distance in the $z$-direction, to produce the $(p,p\cdot\maxtb(K))$-cable of $K$.  If the front for $K$ has writhe $w$ and $c$ cusps, and hence $\maxtb(K) = w - \frac{1}{2}c$, then this $p$-copy has writhe $p^2w  - \frac{p(p-1)}{2}c$ and $pc$ cusps, hence $tb(C_{p,p\cdot \maxtb(K)}(K)) = p^2 \cdot \maxtb(K)$.  If $q < p\cdot \maxtb(K)$ we insert $p \cdot \maxtb(K)-q$ negative $\frac{1}{p}$-twists, each of which has writhe $-(p-1)$ and two cusps and hence adds $-p$ to $tb$, to get $tb(K') = pq$.  If instead $q > p\cdot \maxtb(K)$ then we insert $q-p\cdot\maxtb(K)$ positive $\frac{1}{p}$ twists, each of which has writhe $p-1$ and no cusps, thus adding $p-1$ to $tb$, to get $tb(K') = pq - q + p\cdot \maxtb(K)$.  Thus the front we have constructed provides the desired lower bounds on $\maxtb(K')$ for arbitrary $q$.
\end{proof}

\begin{remark}
The claim that $\maxtb(C_{p,q}(K)) \leq pq$ is actually false for $q=-1$, because if $U$ is the unknot then so is $C_{p,-1}(U)$ for any $p\geq 2$ and so $\maxtb(C_{p,-1}(U)) = -1 > -p$.  One can also see that extending the results of Corollary~\ref{cor:maxtbcables} to $q = -1$ more generally would require removing the possibility of the first conclusion in Theorem~\ref{thm:tb-negative-thm}, since $S^3_{-p}(C_{p,-1}(K)) = S^3_{-p}(U) \# Y$ where $Y=S^3_{-1/p}(K)$.
\end{remark}

It turns out that in the case that $q > p\cdot \maxtb(K)$ in Corollary~\ref{cor:maxtbcables}, we are still sometimes able to determine the maximum Thurston-Bennequin numbers for cables.  We illustrate this for a family of iterated torus knots, namely the ones which are L-space knots, below.

Recall that a knot is an \emph{L-space knot} if it admits a positive L-space surgery, i.e. a rational homology sphere $Y$ with $|H_1(Y;\mathbb{Z})| = \rank \widehat{HF}(Y)$, where $\widehat{HF}$ denotes the hat-flavor of Heegaard Floer homology.  L-space knots are fibered and strongly quasipositive \cite{ni-fibered, hedden-positivity} and thus satisfy $\maxsl(K) = 2g(K)-1$ \cite{etnyre-vhm} and $\maxtb(K) \geq 0$ \cite{rudolph-sliceness}.
We make the following conjecture, which together with Theorem~\ref{thm:main} would immediately imply the main result of \cite{hom-lidman-zufelt}.

\begin{conjecture}\label{conj:maxtblspace}
If $K$ is an L-space knot, then $\maxtb(K) = 2g(K) - 1$.  
\end{conjecture}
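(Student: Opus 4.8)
The plan is to establish the two inequalities $\maxtb(K)\le 2g(K)-1$ and $\maxtb(K)\ge 2g(K)-1$ separately, where $g(K)$ is the Seifert genus. The upper bound is immediate from the Bennequin inequality \cite{bennequin}: for any Legendrian representative $L$ of $K$ one has $tb(L)+|r(L)|\le 2g(K)-1$, where $r$ is the rotation number. In particular $tb(L)\le 2g(K)-1$, with equality only when $r(L)=0$. Thus the entire difficulty lies in producing a Legendrian representative realizing $tb=2g(K)-1$, equivalently one with $r=0$ that saturates the Bennequin bound.

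For the lower bound I would start from strong quasipositivity. Since L-space knots are strongly quasipositive we have $\maxsl(K)=2g(K)-1$ by \cite{etnyre-vhm}, so there is a Legendrian $L_1$ whose positive transverse pushoff realizes the maximal self-linking number, i.e.\ $tb(L_1)-r(L_1)=2g(K)-1$. Combined with the Bennequin inequality this forces $r(L_1)\le 0$ and $tb(L_1)=2g(K)-1+r(L_1)$, so the conjecture is \emph{equivalent} to the assertion that $\maxsl(K)$ is attained at rotation number zero: if $r(L_1)=0$ we are done, and the only obstruction is the possibility that every maximal self-linking Legendrian has $r<0$. Moreover the reverse $-K$ is again an L-space knot of the same genus (the surgered manifolds $S^3_n(-K)$ and $S^3_n(K)$ agree as unoriented manifolds), hence is also strongly quasipositive; this symmetrically yields a Legendrian $L_2$ with $tb(L_2)+r(L_2)=2g(K)-1$ and $r(L_2)\ge 0$, so the \emph{mountain range} of achievable pairs $(tb(L),r(L))$ touches the bounding lines on both sides of $r=0$, and one wants to conclude that it reaches its apex exactly at $r=0$.

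The main obstacle, and the reason the statement is only a conjecture since strong quasipositivity alone does not suffice, is ruling out the case of strictly negative rotation number; this is precisely where the L-space hypothesis must enter. I would attempt this through the transverse and Legendrian invariants in knot Floer homology (the LOSS invariant, or the GRID invariants of Ozsv\'ath--Szab\'o--Thurston). For an L-space knot the group $\widehat{HFK}(K)$ is maximally rigid: in the top Alexander grading $g(K)$ it has rank one and is supported in a single Maslov grading. The transverse invariant of the maximal self-linking representative is nonzero and lives precisely in this top Alexander grading, while the two Legendrian refinements $\mathfrak{L}^{+}(L_1)$ and $\mathfrak{L}^{-}(L_1)$ carry Maslov gradings that differ by a nonzero multiple of the rotation number $r(L_1)$. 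If one can show that both refinements are nonzero, then they represent elements of the same one-dimensional top group, so their Maslov gradings coincide, forcing $r(L_1)=0$ and hence $tb(L_1)=2g(K)-1$. The delicate points are the precise grading bookkeeping relating the Maslov gradings of the two Legendrian invariants to $r$, and the simultaneous nonvanishing of both refinements. As partial evidence, for the subfamily of iterated torus L-space knots one can instead proceed by induction, using the cabling relation of Corollary~\ref{cor:maxtbcables} together with the computation of $\maxtb$ for cables in \cite{etnyre-lafountain-tosun}.
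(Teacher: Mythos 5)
The statement you are trying to prove is stated in the paper as Conjecture~\ref{conj:maxtblspace}: it is \emph{open}, and the paper offers only partial evidence, namely that it holds for Berge knots (Proposition~\ref{prop:berge-tb}, via braid positivity or divide-knot structure) and that it is preserved under the cabling operations that produce new L-space knots (Proposition~\ref{prop:fibered-2g-1-cables}), which covers iterated torus L-space knots. Your closing remark about iterated torus knots essentially reproduces that last piece of evidence, but the bulk of your proposal claims a route to the full statement, and that route has a fatal flaw. First, the grading bookkeeping is wrong in a way that makes the argument circular: the two Legendrian refinements in knot Floer homology live in \emph{Alexander} gradings $\frac{tb - r + 1}{2}$ and $\frac{tb + r + 1}{2}$ respectively, so when $r(L_1) < 0$ they do not both lie in the top Alexander grading $g(K)$ at all — asking that they both sit in the rank-one top group already presupposes $r = 0$, which is exactly what you are trying to prove. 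There is no mechanism in your sketch forcing the second refinement to be nonzero in, or even to occupy, the top grading.

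Second, and more decisively, the only property of L-space knots your argument invokes — that $\widehat{HFK}$ has rank one in the top Alexander grading, supported in a single Maslov grading — holds for \emph{every} fibered knot, by Ni's fiberedness detection. So if your argument worked, it would prove $\maxtb(K) = 2g(K)-1$ for all fibered strongly quasipositive knots, and the paper itself exhibits counterexamples to that stronger claim: for $K = C_{3,2}(T_{2,3})$, which is fibered and strongly quasipositive with $\maxsl(K) = 7 = 2g(K)-1$, Etnyre--Honda showed $\maxtb(K) = 6 < 2g(K)-1$ (and the paper gives a hyperbolic example as well, the closure of the braid \eqref{eq:hyperbolic-braid}, with $\maxtb \leq 10 < 11 = 2g-1$). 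Your ``mountain range touches both bounding lines'' observation is correct but insufficient for the same reason: in these examples the peaks occur at $r = \pm 1$ and the range plateaus below $tb = 2g-1$. Any successful attack must therefore use the L-space condition in an essentially stronger way than top-grading rigidity of $\widehat{HFK}$; as it stands, the conjecture remains open and your proposal does not close it.
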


In Section~\ref{sec:positive}, we give evidence for this conjecture, including the fact that it holds for Berge knots (Proposition~\ref{prop:berge-tb}), which are the only knots known to have lens space surgeries; and that if it holds for the L-space knot $K$ then it also holds for any cable of $K$ which is also an L-space knot (Proposition~\ref{prop:fibered-2g-1-cables}).  This implies, for example, that $\maxtb(K)=2g(K)-1$ whenever $K$ is an iterated torus knot -- meaning there is a sequence of cables 
\[ K_1 = T_{p_1,q_1}, K_2 = C_{p_2,q_2}(K_1),\dots, K_n = C_{p_n,q_n}(K_{n-1}) \]
with $K = K_n$ -- such that $K_1$ is a positive torus knot and $\frac{q_i}{p_i} \geq 2g(K_{i-1}) - 1$ for all $i\geq 2$.  These conditions on an iterated torus knot are equivalent to it being an L-space knot \cite{hedden-cabling2, hom-cabling}.

\subsection*{Organization}
In Section~\ref{sec:background}, we review the relevant background on contact topology and Stein fillings and prove Proposition~\ref{prop:decomp}.  In Section~\ref{sec:main}, we give a short proof of Theorem~\ref{thm:main}; we then discuss knots which satisfy $\maxtb=2g-1$, show that this holds for positive knots (establishing Theorem~\ref{thm:pos}) and complete the proof of Theorem~\ref{thm:cabling-genus2-positive}.  In Section~\ref{sec:min-d3} we develop some of the background needed to study the case $\maxtb < 0$ and use this to give another proof of Theorem~\ref{thm:main}. Finally, in Section~\ref{sec:negative-tb} we use this background material to prove Theorem~\ref{thm:tb-negative-thm}.

\subsection*{Acknowledgments}
We would like to thank Mohan Bhupal, John Etnyre, Bob Gompf, Cameron Gordon, and Jeremy Van Horn-Morris for helpful discussions.  We would also like to acknowledge that John Etnyre was independently aware some years ago that Theorem~\ref{thm:steinboundary-intro} could be applied to study the cabling conjecture.  Theorem~\ref{thm:cabling-genus2-positive} was completed at the ``Combinatorial Link Homology Theories, Braids, and Contact Geometry'' workshop at ICERM, so we would like to thank the organizers for a productive workshop and the institute for its hospitality.  The first author was supported by NSF RTG grant DMS-0636643.  The second author was supported by NSF postdoctoral fellowship DMS-1204387.

\section{Background}\label{sec:background}

\subsection{Reducible surgeries}
To simplify future references, we collect the list of theorems about reducible surgeries mentioned in the introduction. 
\begin{theorem}
\label{thm:reducible-background}
Let $K$ be a non-trivial knot in $S^3$ and $n,m$ relatively prime integers such that $m \geq 1$.  If $S^3_{n/m}(K)$ is reducible, then   
\begin{enumerate}
\item\label{i:integral} \cite[Theorem~1]{gordon-luecke-integral} $m = 1$;
\item\label{i:lens-summand} \cite[Theorem~3]{gordon-luecke-lens} $S^3_n(K) = L(p,q) \# Y$ for some non-trivial lens space $L(p,q)$, and thus $|n| \geq 2$;
\item\label{i:matignon-sayari} \cite[Theorem~1.1]{matignon-sayari} either $|n| \leq 2g(K) - 1$ or $K$ is not hyperbolic;
\item\label{i:hyperbolic} \cite{moser,scharlemann} if $K$ is not hyperbolic, then it is an $(r,s)$-cable and $n=rs$.
\end{enumerate}
\end{theorem}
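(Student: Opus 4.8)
The plan is to assemble the four conclusions directly from their respective sources, since each is a deep known result; the only genuinely new work is verifying the elementary consequences appended to items~\eqref{i:lens-summand} and \eqref{i:hyperbolic}, and checking that the geometrization trichotomy for knot complements lets us invoke Moser and Scharlemann together in the non-hyperbolic case.

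First I would establish \eqref{i:integral} by quoting Gordon--Luecke's integrality theorem \cite{gordon-luecke-integral}: a reducible surgery on a non-trivial knot has integral slope, so under the hypothesis $\gcd(n,m)=1$, $m\geq 1$, we must have $m=1$ and the coefficient is the integer $n$. This reduces the remaining items to integral surgeries. For \eqref{i:lens-summand}, I would invoke the lens-space summand theorem of \cite{gordon-luecke-lens}, giving a decomposition $S^3_n(K) = L(p,q) \# Y$ with $L(p,q)$ a non-trivial lens space, i.e.\ $p \geq 2$. The deduction $|n|\geq 2$ is then a one-line homology computation: $H_1(S^3_n(K);\ZZ)$ has order $|n|$, while $H_1$ of the connected sum is $\ZZ/p \oplus H_1(Y;\ZZ)$, so $p$ divides $|n|$; in particular $n\neq 0$ (a free $H_1$ cannot contain $\ZZ/p$ as a summand, and in any case $S^3_0(K)$ is prime by Property~R), whence $|n| \geq p \geq 2$. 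Item \eqref{i:matignon-sayari} I would cite from Matignon--Sayari \cite{matignon-sayari} verbatim, with nothing to add.

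Finally, \eqref{i:hyperbolic} follows by combining the geometric trichotomy with two classification results. If $K$ is not hyperbolic, then by geometrization its complement is either Seifert fibered or contains an essential torus, so $K$ is a torus knot or a satellite knot. In the torus-knot case, Moser's analysis \cite{moser} shows that the only reducible surgery on $T_{r,s}$ occurs at slope $rs$, and that $T_{r,s}=C_{r,s}(U)$ is itself a cable; in the satellite case, Scharlemann \cite{scharlemann} shows that a satellite admitting a reducible surgery must be a cable knot $C_{r,s}(K')$ with reducing slope $rs$. Either way $K$ is an $(r,s)$-cable and $n = rs$.

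There is no serious obstacle at the level of assembly: the substantive content lives entirely in the cited papers (sutured-manifold theory for Gordon--Luecke, the combinatorial sphere-and-surface arguments of Matignon--Sayari, and the Seifert-fibered and satellite analyses of Moser and Scharlemann). The only points demanding any care are the homological deduction that $|n|\geq 2$, and confirming that the non-hyperbolic case is exhausted by exactly the torus-knot and satellite cases, so that Moser and Scharlemann together cover it without gaps.
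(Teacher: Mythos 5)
Your proposal is correct and matches the paper exactly: the paper offers no proof of this theorem beyond the citations themselves, stating it only as a compilation of known results, and your supplementary checks (the homology computation $|n| = p\cdot|H_1(Y)| \geq 2$, noted in the paper immediately after the theorem, and the torus-knot/satellite trichotomy covering the non-hyperbolic case, with torus knots regarded as cables of the unknot) are precisely the elementary glue the paper takes for granted.
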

Note that since $H_1(S^3_n(K)) = \Z/|n|\Z$, we must have that $|n| = p \cdot |H_1(Y)|$.  Also, in item \eqref{i:hyperbolic} above we consider torus knots to be cable knots, since they are cables of the unknot.

\subsection{Legendrian knots}

For background on Legendrian knots we refer to the survey \cite{etnyre-legendrian} by Etnyre.  In this paper we will only be concerned with Legendrian knots $K$ in the standard tight contact structure $\xi_{\mathrm{std}}$ on $S^3$, i.e.\ knots $K\subset S^3$ which satisfy $TK \subset \xi_{\mathrm{std}}$.  If the front projection of an oriented Legendrian knot $K$ has writhe $w$ and $c_+$ (resp.\ $c_-$) upwardly (resp.\ downwardly) oriented cusps, then its two classical invariants, the Thurston-Bennequin number and rotation number, are defined by
\begin{align*}
tb(K) &= w - \frac{1}{2}(c_+ + c_-), & r(K) &= \frac{1}{2}(c_- - c_+).
\end{align*}
The operations of positive and negative stabilization, which produce a new Legendrian knot $K_\pm$ which is topologically isotopic to $K$ but not Legendrian isotopic to it, change these invariants according to
\begin{align*}
tb(K_\pm) &= tb(K) - 1, & r(K_\pm) = r(K) \pm 1.
\end{align*}
Reversing the orientation of $K$ preserves $tb(K)$ while replacing $r(K)$ with $-r(K)$.

The classical invariants of a Legendrian knot are constrained in general by the Bennequin inequality \cite{bennequin}
\[ tb(K) + |r(K)| \leq 2g(K)-1, \]
where $g(K)$ is the Seifert genus of $K$.  This inequality has been strengthened several times, so that the right side can be replaced by $2g_s(K)-1$, where $g_s(K) \leq g(K)$ is the smooth slice genus \cite{rudolph-quasipositivity}; by $2\tau(K)-1$, where $\tau(K) \leq g_s(K)$ is the Ozsv{\'a}th-Szab{\'o} tau invariant \cite{plamenevskaya-slice-bennequin}; or by $s(K)-1$, where $s(K) \leq 2g_s(K)$ is Rasmussen's $s$ invariant \cite{plamenevskaya-khovanov, shumakovitch}.

\subsection{Stein fillings}
A contact manifold $(Y,\xi)$ is said to be \emph{Stein fillable} if there is a Stein manifold $(X,J)$ with a strictly plurisubharmonic exhausting function $\varphi: X \to \R$ such that $Y = \varphi^{-1}(c)$ for some regular value $c$ of $\varphi$ and $\xi = TY \cap J(TY)$.  The subdomain $(\varphi^{-1}((-\infty,c]), J)$ is a \emph{Stein filling} of $(Y,\xi)$.

Eliashberg \cite{eliashberg-filling} and Gromov \cite{gromov} proved that if $(Y^3,\xi)$ admits a Stein filling, then $\xi$ is tight.  Moreover, Eliashberg characterized the manifolds which admit Stein structures in terms of handlebody decompositions as follows.
\begin{theorem}[Eliashberg, cf.\ \cite{gompf}]\label{thm:steinfill}
Let $X$ be a compact, oriented 4-manifold.  Then $X$ admits a Stein structure if and only if it can be presented as a handlebody consisting of only 0-, 1-, and 2-handles, where the 2-handles are attached along Legendrian knots with framing $tb - 1$ in the unique tight contact structure on $\#^k (S^1 \times S^2)$.
\end{theorem}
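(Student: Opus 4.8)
The plan is to prove the two implications separately, with the bridge in both directions being the Morse theory of a strictly plurisubharmonic exhausting function $\varphi$ together with the geometry of its level sets $Y_c = \varphi^{-1}(c)$, which carry the contact structures $\xi_c = TY_c \cap J(TY_c)$.

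For the ``only if'' direction, suppose $(X,J,\varphi)$ is Stein. First I would perturb $\varphi$, keeping it strictly $J$-convex, so that it becomes a Morse function with distinct critical values; the sublevel sets then assemble $X$ into a handlebody with one handle per critical point. The key structural input is a bound on the Morse indices: at a critical point the negative eigenspace of the real Hessian of a strictly psh function contains no complex line, since the Levi form is positive definite, so this eigenspace is totally real and hence of real dimension at most $\dim_{\C}X = 2$. Thus every handle has index $0$, $1$, or $2$. It then remains to identify the attaching data of the index-$2$ handles. Each such handle is attached along the boundary of the descending manifold of an index-$2$ critical point, a circle sitting in a level set $Y_c$ just below the critical value. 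One shows this circle may be isotoped to be Legendrian for $\xi_c$, and a local model computation in the handle identifies the handle framing as one less than the contact framing, i.e.\ $tb - 1$. The subcritical $1$-handles are attached along level sets that are connected sums of $S^1 \times S^2$, whose induced contact structures are tight by the theorem of Eliashberg and Gromov quoted above, and hence are the unique tight structure there.

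For the ``if'' direction, I would build the Stein structure inductively along the handle decomposition, starting from the standard Stein ball $B^4 \subset \C^2$ whose contact boundary is the tight $S^3$. The $0$- and $1$-handles are subcritical, and a subcritical handle can always be attached to a Stein domain so as to remain Stein; attaching $k$ one-handles produces a Stein domain with boundary $\#^k(S^1\times S^2)$ carrying its unique tight contact structure. The content is in the $2$-handles: given a Legendrian knot $\Lambda$ in this tight contact boundary with framing $tb(\Lambda) - 1$, I would attach a critical Weinstein handle along $\Lambda$ and extend $\varphi$ across it to a strictly psh function. The framing deficit of $-1$ is exactly the amount forced by the model critical handle, so the hypothesis on the framing is precisely what makes the extension possible, and the result is again a Stein domain.

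The hard part will be the analytic heart of each direction: in the forward direction, verifying that the index-$2$ attaching circle is Legendrian and that its contact framing differs from the handle framing by exactly $-1$; and in the backward direction, the extension of the plurisubharmonic function across a critical $2$-handle attached with framing $tb - 1$. This last step is where Eliashberg's flexibility results for $J$-convex functions (equivalently, Weinstein's symplectic handle attachment adapted to the Stein category) are essential. A secondary subtlety is that the inductive construction is only well-defined because the tight contact structure on $\#^k(S^1\times S^2)$ is unique, so that ``the framing $tb - 1$'' is unambiguous regardless of how the earlier handles were arranged.
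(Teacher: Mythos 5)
Your outline is correct and coincides with the standard proof: the paper itself offers no argument for this theorem, citing Eliashberg (cf.\ Gompf), and your sketch reproduces precisely the structure of those sources --- the index bound via total reality of the descending manifolds of a strictly plurisubharmonic Morse function (no complex line in the negative eigenspace since the Levi form is positive), automatically-or-isotopically Legendrian attaching circles for the index-$2$ handles with handle framing equal to the contact framing minus one, tightness of the subcritical boundary $\#^k(S^1\times S^2)$ from fillability plus Eliashberg's uniqueness theorem, and the converse by inductive Weinstein/Stein handle attachment starting from $B^4\subset\C^2$. You have also correctly located the genuinely hard analytic content (the framing computation in the local model and the extension of the $J$-convex function across a critical handle, i.e.\ Eliashberg's flexibility results) and delegated it exactly where the cited references do, so there is nothing to correct.
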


In particular, we see that given a knot $K$ in $S^3$, the manifold obtained by attaching a 2-handle to $B^4$ with framing at most $\maxtb(K) - 1$ admits a Stein structure, since by stablizing, we can obtain a Legendrian representative with $tb(K) = n$ for any $n \leq \maxtb(K)$. 

Since lens spaces have metrics of positive scalar curvature, the topology of their Stein fillings is heavily constrained.  
\begin{theorem}[Lisca \cite{lisca-fillings}]\label{thm:lisca-fillings}
Let $(X,J)$ be a Stein filling of a lens space.  Then $b^+_2(X) = 0$.  
\end{theorem}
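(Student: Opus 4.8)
The plan is to prove this by Seiberg--Witten gauge theory, using in an essential way that the lens space $Y=\partial X$ admits a metric $g_Y$ of positive scalar curvature. First I would record that a Stein filling is in particular a strong symplectic filling $(X,\omega)$, so that $X$ carries the canonical $\spc$ structure $\mathfrak{s}_J$ with $c_1(\mathfrak{s}_J)=c_1(X,J)$, and near the boundary $\omega$ agrees with the symplectization of the tight contact form $\alpha$ with $\xi=\ker\alpha=TY\cap J(TY)$. I would then attach a cylindrical end to form $\widehat{X}=X\cup_Y\big(Y\times[0,\infty)\big)$ and study the perturbed Seiberg--Witten equations on $\widehat{X}$ for $\mathfrak{s}_J$; the invariant $b^+_2(X)$ enters both through the expected dimension of the moduli space and, more importantly, through whether the resulting relative invariant is independent of the auxiliary metric and perturbation. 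The strategy is to assume $b^+_2(X)>0$ and derive a contradiction from two competing computations of this invariant.

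The two inputs are as follows. On one hand, putting the product metric $dt^2+g_Y$ on the end and applying the Weitzenböck formula to a finite-energy solution: along the positive-scalar-curvature end the curvature term dominates, forcing the spinor to decay and any finite-energy solution to limit to a reducible (flat) configuration. This is the mechanism by which positive scalar curvature kills the irreducible part of the count. On the other hand, because $(X,\omega)$ is a symplectic filling, Taubes' technique --- turning on a large multiple of the symplectic form as the perturbation --- produces a distinguished irreducible solution for the canonical $\spc$ structure, so that the relative Seiberg--Witten invariant of $(X,\mathfrak{s}_J)$ is nonzero. When $b^+_2(X)>0$ the space of admissible perturbations is connected enough that this relative invariant is independent of the choices, so the symplectic computation (nonzero) and the positive-scalar-curvature computation (zero) cannot both hold. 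This contradiction forces $b^+_2(X)=0$.

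I expect the main obstacle to be the analytic bookkeeping on the cylindrical end, and in particular reconciling the two metrics that the two halves of the argument prefer: Taubes' nonvanishing is most natural for the symplectization metric near $Y$, whereas the Weitzenböck vanishing requires the positive-scalar-curvature product metric, so one must interpolate between them (via neck-stretching) while controlling spectral flow and the reducible locus. The delicate point is that the deformation invariance of the relative invariant can fail across a wall, so the small cases $b^+_2(X)=0,1$ need separate care; establishing that the invariant is well defined and that the two incompatible computations may legitimately be compared is the crux, while the Weitzenböck estimate and Taubes' nonvanishing are by now standard.
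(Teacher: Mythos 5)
The paper itself contains no proof of this statement: it is quoted directly from Lisca \cite{lisca-fillings}, so the only meaningful comparison is with Lisca's original argument --- and your sketch is essentially that argument. Lisca's proof runs along exactly your two rails: a lens space admits a positive scalar curvature metric, and for a strong symplectic (in particular Stein) filling one plays a Taubes-type nonvanishing result off against a Weitzenb\"ock vanishing over a long PSC neck, all within Kronheimer and Mrowka's framework of Seiberg--Witten theory on $4$-manifolds with contact-type ends, where the end is modeled on the symplectization and the equations carry the large perturbation $-ir\omega$ for the canonical $\spc$ structure $\mathfrak{s}_J$.

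That said, you have misallocated the roles of the hypotheses in two places, and one of them leaves your vanishing half incomplete as written. First, the wall-crossing worry in your final paragraph is a red herring: in the relative contact-end setting the invariant is defined and metric/perturbation independent for \emph{every} value of $b^+_2(X)$, because finite-energy solutions are constrained to be asymptotic to the canonical configuration on the end, which is irreducible; reducibles never enter the moduli space, so there are no walls and no need to treat $b^+_2 = 0,1$ separately. Second, and conversely, $b^+_2(X)>0$ is not what makes the invariant well defined --- it is the essential input to the vanishing computation, and your stated mechanism there (``PSC forces solutions to limit to reducible flat configurations, killing the irreducible count'') does not by itself contradict nonvanishing: asymptotic reducibility along a cylindrical neck is compatible with a nonzero count of solutions on the whole manifold. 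What Lisca actually does is use $b^+_2(X)>0$ to produce a nonzero self-dual $L^2$-harmonic form, then perturb the equations by a large multiple of it supported in the compact part; the Weitzenb\"ock estimate on the PSC neck forces the spinor to be small there, and the curvature equation $F^+_A = q(\Phi)+\eta$ then admits no solutions for $\eta$ large, contradicting the Kronheimer--Mrowka nonvanishing. With the perturbation argument substituted for your asymptotic-reducibility step, your plan is the standard and correct proof of the theorem the paper cites.
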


We also recall the definition of the $d_3$ invariant, due to Gompf \cite{gompf}, of oriented plane fields $\xi$ with torsion Chern class on a closed, oriented $3$-manifold.
\begin{theorem}\label{thm:d3-invariant}
Let $(X,J)$ be an almost complex manifold with $\partial X = Y$ and $\xi = TY \cap J(TY)$.  If $c_1(\xi)$ is torsion, then
\begin{equation}\label{eqn:d3-invariant}
d_3(\xi) = \frac{c_1(X,J)^2 - 3\sigma(X) - 2\chi(X)}{4}
\end{equation}
is an invariant of the homotopy class of $\xi$ as an oriented plane field.
\end{theorem}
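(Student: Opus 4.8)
This is Gompf's well-definedness statement for the $d_3$ invariant, so the entire content is that the right-hand side of \eqref{eqn:d3-invariant} depends only on the homotopy class of $\xi$ and not on the chosen almost complex filling $(X,J)$. The plan is to dispose of existence first (any oriented $\xi$ with torsion $c_1(\xi)$ bounds an almost complex $X$, for instance by realizing $Y$ through a handle presentation as in Theorem~\ref{thm:steinfill}) and then to prove independence of the filling. The natural first move is to take two fillings $(X_1,J_1)$ and $(X_2,J_2)$ of $(Y,\xi)$ and glue them into the closed oriented $4$-manifold $Z = X_1 \cup_Y \overline{X_2}$, equipping $\overline{X_2}$ with the conjugate structure $\overline{J_2}$, for which $c_1(\overline{X_2},\overline{J_2}) = -c_1(X_2,J_2)$. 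Because an oriented plane field determines a $\spc$ structure through its data over the $2$-skeleton, $J_1$ and $\overline{J_2}$ induce the same $\spc$ structure along $Y$ and hence glue to a $\spc$ structure $\mathfrak{s}$ on $Z$.

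At this stage the bookkeeping is routine: Novikov additivity gives $\sigma(Z) = \sigma(X_1) - \sigma(X_2)$; one has $\chi(Z) = \chi(X_1) + \chi(X_2)$ since $\chi(Y) = 0$; and, precisely where the torsion hypothesis on $c_1(\xi)$ is used, $c_1(\mathfrak{s})$ admits rational relative lifts on each side, so that the self-intersection splits as $c_1(\mathfrak{s})^2 = c_1(X_1,J_1)^2 - c_1(X_2,J_2)^2$ (the sign coming from the orientation reversal). Assembling these, the equality we want, namely $c_1(X_1,J_1)^2 - 3\sigma(X_1) - 2\chi(X_1) = c_1(X_2,J_2)^2 - 3\sigma(X_2) - 2\chi(X_2)$, becomes equivalent to computing the closed invariant $\frac{1}{4}\bigl(c_1(\mathfrak{s})^2 - 2\chi(Z) - 3\sigma(Z)\bigr)$, which measures the obstruction to realizing $\mathfrak{s}$ by a genuine almost complex structure.

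The hard part is that this invariant does \emph{not} vanish in general, because $J_1$ and $\overline{J_2}$ need not glue to an almost complex structure on $Z$: although they agree over the $2$-skeleton, they may differ over the top cell of the collar, the obstruction living in $\pi_3\bigl(SO(4)/U(2)\bigr) \cong \pi_3(S^2) \cong \Z$. One cannot wish this away — with $X_1 = B^4$ and $X_2$ the disk bundle of Euler number $-1$, both filling $\xi_{\mathrm{std}}$ with $d_3 = -\frac{1}{2}$, one gets $Z = \mathbb{CP}^2$ but $c_1(\mathfrak{s})^2 = 1 \neq 9 = 2\chi(Z) + 3\sigma(Z)$, so no compatible almost complex structure exists. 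The route I would actually follow is therefore to define $d_3(\xi)$ intrinsically as the obstruction-theoretic homotopy invariant of $\xi$ (the relative $\pi_3(S^2)$-obstruction to homotoping $\xi$ to a reference field carrying the same $\spc$ structure, normalized so that $\xi_{\mathrm{std}}$ on $S^3$ gives $-\frac{1}{2}$), and then to prove that the formula computes it by induction on a handle decomposition of $(X,J)$ of the type in Theorem~\ref{thm:steinfill}: the base case is $B^4$, each $1$- and $2$-handle attachment alters $c_1^2 - 2\chi - 3\sigma$ and the boundary invariant $d_3(\xi)$ by matching amounts, and a blow-up drops both $c_1^2$ and $2\chi + 3\sigma$ by $1$ while fixing $\partial X$, confirming interior invariance.

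The main obstacle is exactly the matching under a $2$-handle attachment: one must verify that the change in the relative self-intersection $c_1^2$ records precisely the $\pi_3(S^2)$-obstruction by which the boundary plane field is altered, with the signature contribution controlled through $p_1 = 3\sigma$ together with Novikov additivity. Everything else — existence of a filling, the additivity of $\chi$ and $\sigma$, and the reduction to handle calculus — is standard, and the $\mathbb{CP}^2$ computation above is the cleanest warning that the seemingly innocuous gluing step is where all the subtlety resides.
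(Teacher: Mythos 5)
The paper does not actually prove Theorem~\ref{thm:d3-invariant}; it is quoted from Gompf \cite{gompf}, so your attempt has to be measured against Gompf's obstruction-theoretic argument. Your instinct that the naive gluing fails is correct and the $\mathbb{CP}^2$ example is well chosen, but your bookkeeping contains a concrete error that changes the logic: under orientation reversal $\sigma$ flips sign while $\chi$ does not, so with $\sigma(Z)=\sigma(X_1)-\sigma(X_2)$ and $\chi(Z)=\chi(X_1)+\chi(X_2)$ the difference of the two candidate values of $4d_3$ assembles to
\[ \bigl(c_1(X_1,J_1)^2-3\sigma(X_1)-2\chi(X_1)\bigr)-\bigl(c_1(X_2,J_2)^2-3\sigma(X_2)-2\chi(X_2)\bigr)=c_1(\mathfrak{s})^2-3\sigma(Z)-2\chi(Z)+4\chi(X_2), \]
not to the closed invariant $c_1(\mathfrak{s})^2-3\sigma(Z)-2\chi(Z)$. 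Well-definedness is therefore \emph{not} equivalent to the vanishing of the closed obstruction, contrary to your reduction, and your own example demonstrates this: $B^4$ and the Euler-number-$(-1)$ disk bundle both give $d_3=-\frac{1}{2}$, yet $\frac{c_1(\mathfrak{s})^2-2\chi-3\sigma}{4}(\mathbb{CP}^2)=\frac{1-9}{4}=-2$, which is exactly $-\chi(X_2)$. What actually has to be proven is the relative index identity $c_1(\mathfrak{s})^2-2\chi(Z)-3\sigma(Z)=-4\chi(X_2)$ in the glued situation, and this is what Gompf's proof delivers: he works with almost complex structures defined away from finitely many points, whose local $\pi_3(S^2)$-indices sum to $\frac{c_1^2-2\chi-3\sigma}{4}$ on a closed $4$-manifold, and pushes the comparison of two fillings into such an index count; no handle decomposition enters.

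Your fallback plan has a second genuine gap, one of scope. Theorem~\ref{thm:d3-invariant} quantifies over \emph{arbitrary} almost complex fillings $(X,J)$, whereas induction over handle presentations as in Theorem~\ref{thm:steinfill} reaches only $4$-manifolds admitting Stein structures, together with their blow-ups: such manifolds have no $3$- or $4$-handles and satisfy strong constraints (e.g.\ $b_2^+=0$ when the boundary is a lens space, by Theorem~\ref{thm:lisca-fillings}), while a general almost complex filling need not -- for instance, boundary-summing any filling with a punctured $K3$ surface yields an almost complex filling, of a suitable plane field on the same $3$-manifold, with $b_2^+\geq 3$, which is not a blown-up Stein handlebody, and the theorem must hold for it. Finally, the pivotal matching claim -- that attaching a Weinstein $2$-handle changes $c_1^2-3\sigma-2\chi$ by exactly four times the change in the intrinsic obstruction-theoretic boundary invariant -- is the entire analytic content of the statement, and you flag it yourself as unresolved. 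As written, the proposal is a program that correctly isolates where the subtlety lives, but it is not a proof; to complete it along Gompf's lines you should replace the handle induction by the failure-point obstruction theory described above.
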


All of the three-manifolds we will be concerned with in this paper will be rational homology spheres, so for any oriented plane field $\xi$ that we will consider, $c_1(\xi)$ will be torsion.   
\begin{example}
\label{ex:d3-contractible}
If $(Y,\xi)$ is the boundary of a contractible Stein manifold $X$, then $d_3(\xi) = -\frac{1}{2}$.  Examples include the tight contact structure on $S^3$, which is filled by $B^4$.  
\end{example}

Now, if $(Y_i,\xi_i)$ bounds an almost complex manifold $(X_i,J_i)$ for $i=1,2$, then we can glue a Weinstein $1$-handle to $X_1 \sqcup X_2$ to exhibit the boundary sum $X_1 \natural X_2$ as an almost complex manifold with boundary $(Y_1 \# Y_2, \xi_1 \# \xi_2)$, and so
\begin{equation}\label{eqn:d3-sum}
d_3(\xi_1 \# \xi_2) = d_3(\xi_1) + d_3(\xi_2) + \frac{1}{2}. 
\end{equation}
Combining this fact with Example \ref{ex:d3-contractible}, we see that if $(Y_2,\xi_2)$ is the boundary of a contractible Stein manifold then 
\begin{equation}\label{eqn:d3-contractible}
d_3(\xi_1\#\xi_2)=d_3(\xi_1).
\end{equation}

\subsection{Reducible contact manifolds}

As mentioned in the introduction, our main input will be the following characterization of Stein fillings of non-prime contact three-manifolds, which will enable us to prove Proposition~\ref{prop:decomp}.

\begin{theorem}[Eliashberg \cite{eliashberg-filling, cieliebak-eliashberg}]\label{thm:steinboundary}
Suppose that $(X,J)$ is a Stein filling of a non-prime contact $3$-manifold $(Y_1 \# Y_2, \xi_1 \# \xi_2)$.  Then $(X,J)$ decomposes as a boundary sum $(X_1,J_1) \natural (X_2,J_2)$, where $(X_i,J_i)$ is a Stein filling of $(Y_i,\xi_i)$.\end{theorem}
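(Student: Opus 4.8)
The plan is to follow Eliashberg's method of filling by holomorphic disks: the goal is to produce inside $X$ a properly embedded separating $3$-ball $B$ whose boundary is the connect-sum sphere, and then to cut $X$ along $B$. First I would pin down the geometry of the separating sphere. The contact connected sum $(Y_1 \# Y_2, \xi_1 \# \xi_2)$ is built by deleting a standard Darboux ball from each $(Y_i,\xi_i)$ and gluing along the resulting boundary $2$-spheres; this yields a convex sphere $S \subset Y_1 \# Y_2$ with connected dividing set and a standard tight neighborhood $S \times (-\epsilon,\epsilon)$, and removing $S$ separates $Y_1 \# Y_2$ into punctured copies $Y_1'$ and $Y_2'$ of the two summands. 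Everything reduces to extending this separation from the boundary into the filling.

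Next I would pass to the symplectic completion. As $(X,J)$ is Stein it is an exact (Liouville) filling, so I would attach the positive cylindrical end $(Y_1\#Y_2)\times[0,\infty)$ with its symplectization form and choose an almost complex structure $\hat{J}$ that is cylindrical on the end, tamed by the symplectic form, and adapted to the standard model in a neighborhood of $S$, where $S$ serves as a totally real boundary condition. In that model neighborhood the characteristic foliation of $S$ furnishes a canonical $1$-parameter family of $\hat{J}$-holomorphic disks, attached to $S$ along its leaves, whose boundaries sweep out $S$ minus its two poles; these are the seeds of the family I want to continue through all of $X$.

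The analytic core is then to study the moduli space $\mathcal{M}$ of finite-energy $\hat{J}$-holomorphic disks with boundary on $S$, normalized to agree with the standard family in the model. The relevant Fredholm index is arranged so that $\mathcal{M}$ is a smooth $1$-manifold, and I would argue that $\mathcal{M}$ is compact and that its members are embedded, pairwise disjoint, and sweep out a properly embedded $3$-ball $B \subset X$ with $\partial B = S$. Exactness bounds the energy of the disks, positivity of intersections keeps them embedded and mutually disjoint, and tightness of the fillable contact structure (Eliashberg--Gromov) together with the low energy forbids the disk- and sphere-bubbling and boundary degenerations that would otherwise break the family. This compactness-and-no-bubbling analysis is the \textbf{main obstacle}; it is precisely the technical content established in \cite{eliashberg-filling, cieliebak-eliashberg}.

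Finally I would cut $X$ along $B$. This separates $X$ into two pieces $X_1$ and $X_2$ meeting along the $3$-ball face $B$, so that $X = X_1 \natural X_2$ is literally their boundary sum along $B$. Because $B$ is foliated by $\hat{J}$-holomorphic disks it is a standard, contact-type (convex) hypersurface, so after rounding the corners along $S$ each piece carries the restricted structure $J_i := \hat{J}|_{X_i}$ as a Stein/Weinstein structure. On the boundary, $\partial X_i$ is the punctured summand $Y_i'$ capped off by the copy of $B$, hence $\partial X_i = Y_i$ with induced contact structure $\xi_i$; thus $(X_i,J_i)$ is a Stein filling of $(Y_i,\xi_i)$, completing the decomposition.
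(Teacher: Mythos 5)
This statement is not proved in the paper at all: it is quoted as Eliashberg's theorem, with the entire proof deferred to \cite{eliashberg-filling, cieliebak-eliashberg}, and your outline faithfully reconstructs the filling-by-holomorphic-discs strategy of exactly those sources (Bishop families emanating from the poles of the convex separating sphere, sweeping out a $3$-ball in the completed filling, then cutting along it), while correctly identifying the compactness/no-bubbling analysis as the technical heart and attributing it to the same references. So your proposal takes essentially the same approach as the paper's (cited) proof, and at the level of detail the paper itself operates at, there is nothing to compare beyond noting that your deferral of the hard analysis matches the paper's own citation.
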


\begin{proof}[Proof of Proposition~\ref{prop:decomp}]
By Theorem~\ref{thm:reducible-background}, a reducible surgery on a non-trivial knot is necessarily integral and has a non-trivial lens space summand.  Let $X$ be the 2-handlebody obtained by attaching an $n$-framed 2-handle to the four-ball along $K$.  Observe that $X$ is simply-connected and has intersection form $\langle n \rangle$.  By Theorem~\ref{thm:steinfill}, if $n \leq \maxtb(K) - 1$, then $X$ admits a Stein structure $J$.  Now, Theorem~\ref{thm:steinboundary} implies that if $(X,J)$ is a Stein filling of $(S^3_n(K),\xi) = (L(p,q),\xi_1) \# (Y,\xi_2)$, for $Y \neq S^3$, then $X$ decomposes as a boundary sum, say $X = (W_1,J_1) \natural (W_2,J_2)$, where $(W_1,J_1)$ is a Stein filling of $(L(p,q),\xi_1)$ and $(W_2,J_2)$ is a Stein filling of $(Y,\xi_2)$.  

It is clear that $W_1$ and $W_2$ are simply-connected.  Since $\pi_1(W_1) = 0$ and $H_1(\partial W_1) \neq 0$, we must have $H_2(W_1) \neq 0$.  Then $H_2(W_1)$ is a summand of $H_2(W_1) \oplus H_2(W_2) \cong H_2(X) \cong \Z$, so $H_2(W_1)$ carries $H_2(X)$.  Thus $W_1$ has intersection form $\langle n \rangle$ and $H_2(W_2) = 0$.  Consequently, we have $H_1(\partial W_1) = \Z/|n|\Z$, and so $|n| = |p|$.  Since $\pi_1(W_2) = H_2(W_2) = 0$, and $W_2$ has no 3- or 4-handles by Theorem~\ref{thm:steinfill}, we see that $W_2$ is contractible and thus $H_1(Y) = H_1(\partial W_2) = 0$.  

In summary, we have $S^3_n(K) = L(|n|,q) \# Y$, where $Y$ is an integer homology sphere, $(W_1,J_1)$ provides a simply-connected Stein filling of $L(|n|,q)$ with intersection form $\langle n \rangle$, and $(W_2,J_2)$ provides a contractible Stein filling of $Y$.  If $S^3_n(K)$ has at least three nontrivial connected summands then all but one of them are lens spaces by Theorem~\ref{thm:reducible-background}, and since $Y$ is a homology sphere we conclude that it must be irreducible.  Finally, if $n > 0$ then $b^+_2(W_1) > 0$, contradicting Theorem~\ref{thm:lisca-fillings}, so it follows that $n < 0$.
\end{proof}

\subsection{Tight contact structures on lens spaces}\label{subsec:tight-lens}

We recall the classification of tight contact structures on the lens space $L(p,q)$, due to Giroux and Honda.  We use the convention here and from now on that $L(p,q)$ denotes $-\frac{p}{q}$-surgery on the unknot, and that $1 \leq q < p$.

\begin{theorem}[\cite{giroux-lens,honda}]
\label{thm:lens-space-classification}
If $-\frac{p}{q}$ has continued fraction
\[ [a_1,a_2,\dots,a_n] := a_1 - \frac{1}{a_2 - \frac{1}{\dots - \frac{1}{a_n}}}, \]
where $a_i \leq -2$ for all $i$, then Legendrian surgery on a chain of topological unknots of length $n$ in which the $i$th unknot has Thurston-Bennequin number $a_i+1$ and rotation number
\[ r_i \in \{a_i+2, a_i+4, a_i+6,\dots, |a_i|-2\} \]
produces a tight contact structure on $L(p,q)$.  This construction gives a bijection between the set of such tuples $(r_1,\dots,r_n)$, which has $\prod_i (|a_i|-1)$ elements, and the set of tight contact structures on $L(p,q)$ up to isotopy.
\end{theorem}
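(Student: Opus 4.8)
The plan is to establish the asserted bijection in two independent directions: an existence-and-counting direction showing that each admissible tuple $(r_1,\dots,r_n)$ produces a tight contact structure, and a uniqueness direction showing that these account for all tight structures and are pairwise non-isotopic. The existence direction is the routine one. Since $-\frac{p}{q}$ has continued fraction $[a_1,\dots,a_n]$ with every $a_i\le -2$, the linear chain of $n$ unknots in which consecutive members form a Hopf link, with surgery framings $a_1,\dots,a_n$, is a Kirby diagram for $-\frac{p}{q}$-surgery on the unknot, i.e.\ for $L(p,q)$; this is the standard dictionary between negative continued fractions and linear plumbings. Realizing the $i$-th unknot as a Legendrian knot with $tb=a_i+1$ makes its framing exactly $tb-1$, so by the handlebody characterization of Stein manifolds (Theorem~\ref{thm:steinfill}) the associated $4$-manifold is a Stein filling of the contact structure so constructed on $L(p,q)$, and by Eliashberg--Gromov such a filling forces tightness. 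A direct count shows there are precisely $\prod_i(|a_i|-1)$ admissible tuples.

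The uniqueness direction is the main obstacle and is exactly where Giroux's and Honda's convex surface theory enters. The strategy is to present $L(p,q)$ as two solid tori glued along a torus $T$, make $T$ convex, and normalize its dividing set to two parallel curves of a fixed slope. Cutting along $T$ reduces the classification to counting tight contact structures on a solid torus $S^1\times D^2$ with prescribed convex boundary, and this count is controlled by the Farey tessellation: the expansion $[a_1,\dots,a_n]$ records a minimal edge-path in the Farey graph from the meridian slope to the boundary slope, and each of its $n$ blocks contributes a factor of $|a_i|-1$. The key technical input is the bypass-attachment lemma, which shows that every tight structure on the solid torus is obtained from a standard one by a sequence of bypasses indexed by the edges of this path, and that bypasses within a single continued-fraction block may be reordered without producing new structures. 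Carrying this analysis out yields the upper bound $\prod_i(|a_i|-1)$ and simultaneously shows that each tight structure is isotopic to one coming from the Legendrian construction.

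Finally, I would deduce that the map from tuples to isotopy classes is a bijection by matching the two bounds. The convex-surface normal form from the previous step distinguishes the constructed structures directly through their (relative) dividing-set data, so distinct tuples yield non-isotopic contact structures; since there are $\prod_i(|a_i|-1)$ of them and this equals the upper bound, the map is forced to be a bijection. As a cross-check, the rotation numbers $(r_1,\dots,r_n)$ determine $c_1$ of the Stein filling, hence its $\mathrm{Spin}^c$ structure and, via \eqref{eqn:d3-invariant}, the invariant $d_3$ of Theorem~\ref{thm:d3-invariant}; on the rational homology sphere $L(p,q)$ these homotopy-theoretic invariants already separate many of the tuples, consistent with the finer convex-surface count. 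I expect essentially all of the genuine difficulty to sit in the second paragraph: the construction and the combinatorial count are bookkeeping with continued fractions, whereas controlling dividing sets and bypasses on the solid torus --- especially the non-creation of extra structures under reordering --- carries the real contact-geometric content.
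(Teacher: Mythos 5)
The paper does not prove this statement: it is quoted as background, with the proof deferred entirely to the cited work of Giroux and of Honda, so there is no internal argument to compare yours against. Measured instead against the actual proofs in the literature, your outline is a faithful reconstruction of the standard (Honda-style) argument, and it is organized correctly: the existence half via the linear-plumbing Kirby diagram realized by Legendrian unknots with $tb=a_i+1$, tightness from Stein fillability via Eliashberg--Gromov, and the count $\prod_i(|a_i|-1)$ of admissible tuples; the uniqueness half via a genus-one Heegaard splitting, convex surfaces, and the Farey-tessellation/bypass analysis on solid tori. You also correctly locate where all the genuine difficulty lives. But be clear that, as written, this is a proof \emph{sketch} rather than a proof: the ``key technical input'' of your second paragraph --- that every tight structure on the solid torus with fixed convex boundary data factors into bypass attachments along the minimal Farey path, and that reorderings of bypasses within a continued-fraction block do not produce new isotopy classes --- is precisely the content of Honda's classification and occupies the bulk of his paper; naming the lemma is not the same as proving it.

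One substantive caution on your third paragraph. The claim that distinct tuples are ``distinguished directly through their (relative) dividing-set data'' needs an argument: a priori two different bypass sequences could yield isotopic structures, and this is exactly what must be excluded for the lower bound. The two standard fixes are (i) Giroux/Honda's computation of relative Euler classes from the normalized dividing sets, showing these invariants separate the $\prod_i(|a_i|-1)$ structures, or (ii) the theorem of Lisca--Mati\'c that Stein fillings with distinct first Chern classes induce non-isotopic contact structures, applied to the fillings $(X,J_{\rv})$ whose $c_1$ is determined by the rotation numbers. Your cross-check via $d_3$ cannot close this gap on its own: as the paper itself notes in Remark~\ref{rem:xi-conjugate}, conjugate tuples $\pm\rv$ have equal $d_3$, so homotopy-theoretic invariants of the plane field of that kind separate only about half the tuples. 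With either (i) or (ii) supplied, the matching of upper and lower bounds completes the bijection exactly as you say.
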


The Legendrian surgery construction of Theorem~\ref{thm:lens-space-classification} also produces a Stein filling $(X,J_{\rv})$ of each $(L(p,q),\xi_{\rv})$, where $\xi_{\rv}$ is the contact structure determined by the ordered set of rotation numbers
\[ \rv = \langle r_1,r_2,\dots,r_n \rangle \]
once we orient each unknot in the chain so that every pair of adjacent unknots has linking number $1$; we make this choice of orientation to simplify the linking matrix, and consequently the matrix presentation of the intersection form for $X$.  Then $\sigma(X) = -n$ since $X$ is necessarily negative definite, and $\chi(X) = n+1$, so
\begin{equation}\label{eq:d_3-lens-gompf}
d_3(\xi_{\rv}) = \frac{c_1(X,J_{\rv})^2 + n - 2}{4}. 
\end{equation}
According to Gompf \cite{gompf}, the Chern class in this formula is Poincar\'{e} dual to $\sum_{i=1}^n r_i [D_i] \in H_2(X,\partial X)$, where each disk $D_i$ is the cocore of the $2$-handle attached to $\partial B^4 = S^3$ along the $i$th unknot.  We will use this description later to compute $c_1(X,J_{\rv})^2$.

\begin{remark}
\label{rem:xi-conjugate}
The tight contact structures on $L(p,q)$ come in conjugate pairs $\xi = \xi_{\rv}$ and $\bar{\xi} = \xi_{-\rv}$, which are isomorphic as plane fields but with opposite orientations.  The discussion above implies that the corresponding almost complex structures satisfy $c_1(X,J_{\rv}) = -c_1(X,J_{-\rv})$, and hence that $d_3(\xi) = d_3(\bar{\xi})$.  Conjugation acts as an involution on the set of tight contact structures on $L(p,q)$, with at most one fixed point ($\rv=\langle 0,0,\dots,0 \rangle$), which satisfies $d_3(\xi)=\frac{n-2}{4}$ and which only exists if all of the $a_i$ are even.
\end{remark}

\begin{remark}
\label{rem:xi-can}
There is a canonical contact structure $\xican$ on $L(p,q)$, defined as follows: the standard contact structure $\xi_\mathrm{std}$ on $S^3$ is $\ZZ/p\ZZ$--equivariant under the action used to define $L(p,q)$, and $\xican$ is defined as the quotient of $\xi_\mathrm{std}$ under this action.  We know that $\xican$ is the contact structure $\xi_{\langle |a_1|-2,|a_2|-2,\dots,|a_n|-2\rangle}$, in which each $r_i$ is as large as possible \cite[Proposition 3.2]{ozbagci-horizontal} (see also \cite[Section 7]{bhupal-ozbagci}).
\end{remark}

We can use Theorem \ref{thm:lens-space-classification} to bound the number of tight contact structures on $L(p,q)$ as follows.

\begin{proposition}
\label{prop:count-xi}
Take relatively prime integers $p$ and $q$, $p > q \geq 1$, and write $-\frac{p}{q} = [a_1,\dots,a_n]$ with each $a_i \leq -2$.  If $m = \min_i |a_i|$, then $L(p,q)$ has at most 
\[ \frac{m-1}{m}(p-(n-1)(m-1)^{n-1}) \]
tight contact structures up to isotopy, with equality if and only if either $n \leq 2$ or $p=q+1$.
\end{proposition}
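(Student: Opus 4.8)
The plan is to translate the count into an inequality about continued‑fraction continuants and then prove that inequality by induction on $n$. By Theorem~\ref{thm:lens-space-classification}, the number of tight contact structures on $L(p,q)$ is exactly $\prod_{i=1}^n(|a_i|-1)$, so writing $b_i=|a_i|\ge 2$ it suffices to bound $N:=\prod_{i=1}^n(b_i-1)$. The relation $-\frac pq=[a_1,\dots,a_n]$ with $a_i=-b_i$ is equivalent to $\frac pq$ having Hirzebruch–Jung expansion $[[b_1,\dots,b_n]]$, whose numerator is the continuant $p=K(b_1,\dots,b_n)$ defined with the conventions $K(\,)=1$, $K(b_1)=b_1$, and $K(b_1,\dots,b_k)=b_kK(b_1,\dots,b_{k-1})-K(b_1,\dots,b_{k-2})$; equivalently $p=\det$ of the tridiagonal matrix with the $b_i$ on the diagonal and $1$'s off the diagonal. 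A direct rearrangement shows that the desired bound $N\le\frac{m-1}{m}\big(p-(n-1)(m-1)^{n-1}\big)$ is equivalent to the continuant inequality
\[ p\ \ge\ \tfrac{m}{m-1}\,N+(n-1)(m-1)^{n-1}, \]
which is what I would prove, together with the stated equality cases.

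Two preliminary facts drive the argument. First, writing $r_k=K(b_1,\dots,b_k)/K(b_1,\dots,b_{k-1})$, the recursion gives $r_k=b_k-1/r_{k-1}$, and an easy induction yields $r_k\ge b_k-1\ge m-1$; hence $p=\prod_k r_k\ge N$ for every admissible sequence. Second, since $b_j$ occurs only on the diagonal, both $p$ (as a determinant) and $N$ are affine in each $b_j$, with
\[ \frac{\partial}{\partial b_j}\Big(p-\tfrac{m}{m-1}N\Big)=K(b_1,\dots,b_{j-1})\,K(b_{j+1},\dots,b_n)-\tfrac{m}{m-1}\!\!\prod_{i\ne j}(b_i-1). \]
The main step is to show this slope is nonnegative whenever $b_j>m$: the global minimum $m$ then lies in one of the two flanking blocks, so applying the inductive form of the continuant inequality to that block supplies a factor $\frac{m}{m-1}$, while the weak bound $K\ge\prod(b_i-1)$ handles the other block, and their product dominates $\frac{m}{m-1}\prod_{i\ne j}(b_i-1)$. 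Consequently $p-\frac{m}{m-1}N$ is nondecreasing in each non‑minimal $b_j$, so lowering every non‑minimal coordinate down to $m$ does not increase it, reducing the inequality to the case $b_1=\dots=b_n=m$. I expect this reduction—keeping careful track of which block contains the minimum, and of the fact that $m$ is preserved throughout the process—to be the principal obstacle.

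It then remains to prove the base inequality $K(\underbrace{m,\dots,m}_{n})\ge(m+n-1)(m-1)^{n-1}$, since at the all‑equal value the quantity $p-\frac{m}{m-1}N$ equals $K(m,\dots,m)-m(m-1)^{n-1}$. Setting $c_k=K(m,\dots,m)$ with $k$ entries and $\delta_k=c_k-c_{k-1}$, the recursion gives $\delta_k-(m-1)\delta_{k-1}=(m-2)c_{k-2}\ge 0$, so $\delta_k\ge(m-1)\delta_{k-1}$ and hence $\delta_k\ge(m-1)^k$ from $\delta_1=m-1$; feeding this back into $c_n=(m-1)c_{n-1}+\delta_n$ and inducting gives $c_n\ge(m+n-1)(m-1)^{n-1}$, which is precisely the all‑equal case of the continuant inequality.

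Finally, for the equality statement I would trace strictness through the two steps. For $n\le 2$ the continuant inequality is an identity, so equality always holds. For $n\ge 3$ the $\delta_k$ recursion is strict once $m\ge 3$, so in that range equality forces $m=2$; and if $m=2$ but some $b_j\ge 3$, then ordering the reduction so that a single entry $3$ is lowered to $2$ last, that final decrement has slope $K(2,\dots,2)\,K(2,\dots,2)-2=j(n-j+1)-2\ge n-2>0$, again forcing strict inequality. Hence for $n\ge 3$ equality holds exactly when all $b_i=2$, i.e.\ when $p=q+1$, which matches the claimed characterization.
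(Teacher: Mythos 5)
Your route is genuinely different from the paper's: the paper inducts on $n$ by peeling off $a_1$, using $|\operatorname{Tight}(L(p,q))|=(|a_1|-1)\,|\operatorname{Tight}(L(q,r))|$ together with $p=|a_1|q-r$ and the auxiliary estimate $p-q\geq (m-1)^n$ (reversing the continued fraction when the minimum occurs only at $a_1$), whereas you exploit the multilinearity of the continuant to reduce monotonically to the constant sequence $(m,\dots,m)$. Most of your argument checks out: the translation to $p\geq \frac{m}{m-1}N+(n-1)(m-1)^{n-1}$, the slope formula $\partial p/\partial b_j=K(b_1,\dots,b_{j-1})K(b_{j+1},\dots,b_n)$, the flanking-block step (the block containing an entry equal to $m$ has minimum exactly $m$, so the inductive hypothesis applies to it, while $K\geq\prod(b_i-1)$ handles the other block), and the equality bookkeeping for $m=2$ are all correct.

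However, the base case contains a genuine error. The identity ``$c_n=(m-1)c_{n-1}+\delta_n$'' is false: since $\delta_n=c_n-c_{n-1}$ by definition, the correct identity is $c_n=c_{n-1}+\delta_n$ (equivalently $c_n=mc_{n-1}-c_{n-2}$). With the false identity your induction closes, but with the correct one the bounds $\delta_n\geq(m-1)^n$ and $c_{n-1}\geq(m+n-2)(m-1)^{n-2}$ give only $c_n\geq(m+n-2)(m-1)^{n-2}+(m-1)^n$, which falls strictly short of the target $(m+n-1)(m-1)^{n-1}$ once $m\geq 3$ and $n\geq 3$: for $m=n=3$ this yields $8+8=16<20$, while in fact $c_3=21$. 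The culprit is that discarding the term $(m-2)c_{k-2}$ in $\delta_k=(m-1)\delta_{k-1}+(m-2)c_{k-2}$ throws away exactly the contribution responsible for the linear-in-$n$ correction $(n-1)(m-1)^{n-1}$ (and your strictness claim for $m\geq 3$ rests on that same discarded term). The gap is repairable inside your framework: using $c_{k-2}\geq(m-1)^{k-2}$, a simultaneous induction gives $\delta_k\geq(m-1)^k+(k-1)(m-2)(m-1)^{k-2}$, and then $c_n=1+\sum_{k=1}^n\delta_k$ telescopes to exactly $(m+n-1)(m-1)^{n-1}$, with strict inequality when $m\geq 3$ and $n\geq 3$ because $c_j>(m-1)^j$ for $j\geq 1$. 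With that patch, your reduction-plus-base-case argument, including the equality characterization ($n\leq 2$ or $p=q+1$), goes through.
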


\begin{proof}
We remark that if $n \geq 2$ and $-\frac{s}{r} = [a_2,\dots,a_n]$, then 
\begin{equation}\label{eq:numerator-recursion}
-\frac{p}{q} = -\frac{|a_1|s-r}{s},
\end{equation} hence $q=s$ (so in fact $-\frac{q}{r} = [a_2,\dots,a_n]$) and $p=|a_1|q-r$.  We then note that
\[ p - q = (|a_1|-1)q - r \geq (|a_1|-1)(q-r) \geq (m-1)(q-r), \]
with equality only if $|a_1|=m=2$.  Applying this repeatedly gives $p - q \geq (m-1)^n$ with equality if and only if either $|a_i|=m=2$ for all $i$, in which case $p=q+1$, or $n=1$. 

We now prove the proposition by induction on $n$: certainly when $n=1$ we must have $-\frac{p}{q} = [-p] = -\frac{p}{1}$, so $m=p$, and $L(p,1)$ has exactly $p-1 = \frac{m-1}{m}\cdot p$ contact structures by Theorem~\ref{thm:lens-space-classification}. Suppose that $n \geq 2$ and that $a_i=-m$ for some $i>1$.   Then
\begin{align*}
|\operatorname{Tight}(L(p,q))| &= (|a_1|-1) \cdot |\operatorname{Tight}(L(q,r))| \\
&\leq \frac{(|a_1|-1)\cdot (m-1)(q-(n-2)(m-1)^{n-2})}{m} \\
&= \frac{m-1}{m}\left((|a_1|q-q) - (|a_1|-1)(n-2)(m-1)^{n-2}\right) \\
&\leq \frac{m-1}{m}\left(p - (q-r) - (n-2)(m-1)^{n-1}\right) \\
&\leq \frac{m-1}{m}\left(p-(n-1)(m-1)^{n-1}\right),
\end{align*}
where we use the facts that $p=|a_1|q-r$, $|a_1| \geq m$, $n\geq 2$, and $q-r \geq (m-1)^{n-1}$ as shown above.  If we have equality at each step then $q-r = (m-1)^{n-1}$, hence either $n=2$ or $q=r+1$; in the latter case we have $m=2$, and assuming $n>2$ we must have $|a_1|-1=m-1$ as well, so $a_i=-2$ for all $i$, and thus $p=q+1$.  Conversely, if $n=2$ then it is easy to see that equality is preserved, and likewise if $p=q+1$ since this implies $a_i = -2$ for all $i$.

If on the other hand $n \geq 2$ but only $a_1$ is equal to $-m$, then we apply the same argument to $L(p,q')$ where $-\frac{p}{q'} = [a_n,\dots,a_1]$ and observe that this is homeomorphic to $L(p,q)$, since they are presented by surgery on the same chain of unknots viewed from two different perspectives.  This completes the induction.
\end{proof}


\section{Reducible Legendrian surgeries for $\maxtb(K) \geq 0$}\label{sec:main}

\subsection{A proof of Theorem~\ref{thm:main}}
\label{ssec:proof-main}

Proposition~\ref{prop:decomp} guarantees that associated to a reducible Legendrian surgery is a certain Stein filling of a lens space, and consequently a tight contact structure on this lens space.  We will prove Theorem~\ref{thm:main} by showing that reducible Legendrian surgeries on knots with $\maxtb \geq 0$ produce too many tight contact structures in this fashion, appealing to Giroux and Honda's classification of tight contact structures on $L(p,q)$ (Theorem~\ref{thm:lens-space-classification}).  We recall our conventions that $L(p,q)$ is $-\frac{p}{q}$-surgery on the unknot and that $1 \leq q < p$.

\begin{proposition}
\label{prop:reducible-d3-value}
Let $K$ be a knot, and suppose that $S^3_n(K)$ is reducible for some $n \leq \maxtb(K)-1$; write $n=-p$ for some $p \geq 2$.  If $S^3_n(K) = L(p,q) \# Y$, then Legendrian surgery on any representative of $K$ with $tb = 1-p$ and rotation number $r$ induces a tight contact structure $\xi$ on $L(p,q)$ with $d_3(\xi) = -\frac{1}{4p}(r^2+p)$.
\end{proposition}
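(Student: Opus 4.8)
The plan is to realize the tight contact structure $\xi$ on $L(p,q)$ as a summand of the boundary contact structure of an explicit Stein filling, and then to extract $d_3(\xi)$ from Gompf's formula (Theorem~\ref{thm:d3-invariant}). Since $n=-p \leq \maxtb(K)-1$, a maximal-$tb$ representative of $K$ can be stabilized down to one with $tb=1-p$, and attaching a $2$-handle to $B^4$ along it with framing $tb-1 = n$ produces, by Theorem~\ref{thm:steinfill}, a Stein filling $(X,J)$ of $(S^3_n(K),\xi\#\xi_Y)$. Applying Proposition~\ref{prop:decomp} (via Theorem~\ref{thm:steinboundary}), the reducible manifold is $L(p,q)\#Y$ with $Y$ an integer homology sphere, and the filling decomposes as $X = W_1 \natural W_2$, where $W_1$ is a Stein filling of $(L(p,q),\xi)$ and $W_2$ is a contractible Stein filling of $(Y,\xi_Y)$. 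In particular $\xi$ is tight since it is Stein fillable, and the contact structure induced on $\partial X = S^3_n(K)$ is $\xi\#\xi_Y$.

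Next I would reduce the computation of $d_3(\xi)$ to a computation on $X$. Because $W_2$ is contractible, Example~\ref{ex:d3-contractible} gives $d_3(\xi_Y)=-\tfrac12$, so equation~\eqref{eqn:d3-contractible} yields $d_3(\xi) = d_3(\xi\#\xi_Y)$. It therefore suffices to evaluate the right-hand side via Theorem~\ref{thm:d3-invariant} applied to $(X,J)$. The topological inputs are immediate: $X$ is a single $2$-handle attached to $B^4$, so $\chi(X)=2$, and since $n<0$ its intersection form is $\langle n\rangle = \langle -p\rangle$, which is negative definite, giving $\sigma(X)=-1$.

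The remaining ingredient, and the only delicate step, is $c_1(X,J)^2$. Here $\partial X$ is merely a rational homology sphere, so $c_1(X,J)\in H^2(X;\Z)$ admits only a rational lift to $H^2(X,\partial X;\Q)$ and the square must be computed with the $\Q$-valued intersection form. Following Gompf's description (the one used around \eqref{eq:d_3-lens-gompf}), $c_1(X,J)$ is Poincar\'e--Lefschetz dual to $r\,[D]\in H_2(X,\partial X)$, where $D$ is the cocore of the $2$-handle and $r$ is the rotation number. Writing $\Sigma$ for the generator of $H_2(X;\Z)$ with $\Sigma\cdot\Sigma = n = -p$, the framing relation gives $j_*[\Sigma] = -p\,[D]$ in $H_2(X,\partial X)$, so the rational lift of $r[D]$ is $-\tfrac{r}{p}[\Sigma]$, whence
\[ c_1(X,J)^2 = \left(-\tfrac{r}{p}\right)^{2}(\Sigma\cdot\Sigma) = \frac{r^2}{p^2}\cdot(-p) = -\frac{r^2}{p}. \]
Substituting into \eqref{eqn:d3-invariant} and using $d_3(\xi)=d_3(\xi\#\xi_Y)$ then gives
\[ d_3(\xi) = \frac{-r^2/p - 3(-1) - 2\cdot 2}{4} = \frac{-r^2/p - 1}{4} = -\frac{1}{4p}\left(r^2+p\right), \]
as claimed. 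The main obstacle is the correct bookkeeping for $c_1^2$ over $\Q$: one must confirm that Gompf's duality convention and the framing relation $j_*[\Sigma]=n[D]$ combine with the right signs. I would sanity-check this against the special case $q=1$, $n=1$ (a single chain), where the computation must reproduce formula~\eqref{eq:d_3-lens-gompf} for $L(p,1)$.
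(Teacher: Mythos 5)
Your proof is correct and takes essentially the same approach as the paper's: produce the Stein trace of the Legendrian surgery, split it via Theorem~\ref{thm:steinboundary} and Proposition~\ref{prop:decomp}, use equation~\eqref{eqn:d3-contractible} to identify $d_3(\xi)$ with the $d_3$ invariant of the full boundary, and compute $c_1(X,J)^2 = -r^2/p$ from Gompf's description of $c_1$ as dual to $r[D]$ together with the framing relation $[\Sigma] \mapsto n[D]$. The only cosmetic difference is that you pass to the rational lift $-\frac{r}{p}[\Sigma]$ directly, whereas the paper clears denominators and computes integrally via $p^2c^2 = \left(-r[\Sigma]\right)^2 = -r^2p$; these are the same calculation.
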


\begin{proof}
By Theorem~\ref{thm:steinfill}, the reducible Legendrian surgery gives us a Stein filling $(X,J)$ of a reducible contact manifold
\[ (S^3_n(K),\xi_K) = (L(p,q), \xi) \# (Y,\xi'), \]
where $(Y,\xi')$ bounds a contractible Stein manifold by Proposition~\ref{prop:decomp}.  Then equation \eqref{eqn:d3-contractible} says that $d_3(\xi) = d_3(\xi_K)$, so it remains to compute $d_3(\xi_K)$.  By Proposition~\ref{prop:decomp}, $\sigma(X)=-1$ (since $n<0$) and $\chi(X)=2$.  Thus $d_3(\xi_K) = \frac{1}{4}(c_1(X,J)^2 - 1)$.

In order to compute $c_1(X,J)^2$, we observe that $c = PD(c_1(X,J))$ is the class $r[D] \in H_2(X,\partial X)$ \cite{gompf}, where $D$ is the cocore of the $2$-handle attached along $K$.  Then $H_2(X)=\ZZ$ is generated by a surface $\Sigma$ of self-intersection $n$, obtained by capping off a Seifert surface for $K$ with the core of the $2$-handle, and the map $H_2(X)\to H_2(X,\partial X)$ sends $[\Sigma] \mapsto n[D]$.  In particular, it sends $-r[\Sigma]$ to $-rn[D] = pc$, and so
\[ p^2c^2 = (-r[\Sigma])^2 = r^2n = -r^2p, \]
or $c^2 = -\frac{r^2}{p}$.   We conclude that $d_3(\xi) = d_3(\xi_K) = \frac{1}{4}\left(-\frac{r^2}{p}-1\right)$, as desired.
\end{proof}

At this point we can give a simple proof of Theorem~\ref{thm:main}, which says that if $\maxtb(K)\geq 0$ then $n$-surgery on $K$ is irreducible for all $n < \maxtb(K)$.

\begin{proof}[Proof of Theorem~\ref{thm:main}]
Suppose that $S^3_n(K)$ is reducible for some $n < \maxtb(K)$.  We know by Proposition \ref{prop:decomp} that $n\leq -2$ and the reducible manifold has a summand of the form $L(p,q)$ with $p = -n$.  Since $K$ has a Legendrian representative with $tb = \maxtb(K) \geq 0$, and $tb+r$ is odd, after possibly reversing the orientation of $K$, it has a representative with $tb=0$ and $r = r_0 \geq 1$.  We can stabilize this representative $p-1$ times with different choices of signs to get representatives with $tb=1-p$ and
\[ r \in \{r_0 - p + 1, r_0 - p + 3, r_0 - p + 5, \dots, r_0 + p - 1\}, \]
and by reversing orientation we also get one with $tb=1-p$ and $r=-r_0-p+1$.  Thus the Legendrian representatives of $K$ with $tb=1-p$ collectively admit at least $p+1$ different rotation numbers, hence at least $\left\lceil \frac{p+1}{2} \right\rceil$ values of $r^2$.

For each value of $r$ as above, Proposition \ref{prop:reducible-d3-value} says that $L(p,q)$ admits a tight contact structure $\xi$ with $d_3(\xi) = -\frac{r^2+p}{4p}$.  This value of $d_3(\xi)$ is uniquely determined by $r^2$, so the set of rational numbers
\begin{equation}
\label{eq:tight-d3-set}
\left\{ d_3(\xi) \mid \xi \in \operatorname{Tight}(L(p,q)) \right\}
\end{equation}
has at least $\left\lceil \frac{p+1}{2} \right\rceil$ elements.

Now we know from Proposition~\ref{prop:count-xi} that $L(p,q)$ has at most $p-1$ tight contact structures.  Moreover, by Remark~\ref{rem:xi-conjugate} all but at most one of them come in conjugate pairs.  Observe that conjugate contact structures have the same $d_3$ invariant, and so the set \eqref{eq:tight-d3-set} has at most $\left\lceil \frac{p-1}{2} \right\rceil$ elements.  We conclude that
\[ \left\lceil \frac{p+1}{2} \right\rceil \leq \left\lceil \frac{p-1}{2} \right\rceil, \]
which is absurd.
\end{proof}


\subsection{Knots with $\maxtb = 2g-1$}
\label{sec:positive}

In this subsection we discuss the question of which nontrivial knots can have $\maxtb(K) = 2g(K)-1$, where $g(K)$ is the Seifert genus.  We have already shown that reducible surgeries on such knots must have slope at least $2g(K)-1$.  Recall from Theorem~\ref{thm:reducible-background} that if $K$ is also hyperbolic, then Matignon-Sayari showed that reducible surgeries on $K$ have slope at most $2g(K)-1$, so then $n$-surgery on $K$ cannot be reducible unless $n=2g(K)-1$.

\begin{proposition}
\label{prop:max-tb-positive-knot}
If $K$ is a positive knot, then $\maxtb(K) = 2g(K)-1$.
\end{proposition}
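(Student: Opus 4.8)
The plan is to prove the two inequalities $\maxtb(K) \leq 2g(K)-1$ and $\maxtb(K) \geq 2g(K)-1$ separately. The upper bound holds for every knot and requires no positivity: the Bennequin inequality \cite{bennequin} gives $tb(L) + |r(L)| \leq 2g(K)-1$ for every Legendrian representative $L$, and taking $L$ with $tb(L) = \maxtb(K)$ yields $\maxtb(K) \leq 2g(K)-1$. So the real content is the matching lower bound, which is where the hypothesis that $K$ is positive enters.

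For the lower bound I would begin with a positive diagram $D$ of $K$, with $c$ crossings (all positive) and $s$ Seifert circles. The key input is the standard construction of a Legendrian front from a diagram by way of its Seifert circles (see \cite{etnyre-legendrian}): rounding each Seifert circle into a front oval contributes two cusps, and each crossing is realized as a transverse double point, so the resulting Legendrian representative has $c_+ + c_- = 2s$ cusps and writhe equal to $w(D) = c$ (since all crossings are positive). Hence it has
\[ tb = w(D) - \tfrac{1}{2}(c_+ + c_-) = c - s, \]
and therefore $\maxtb(K) \geq c - s$.

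It remains to identify $c-s$ with $2g(K)-1$. Seifert's algorithm applied to the connected diagram $D$ produces a connected Seifert surface $F$ with a single boundary component and $\chi(F) = s - c$, so $g(F) = \tfrac{1}{2}(c-s+1)$, i.e.\ $c - s = 2g(F) - 1$. Since any Seifert surface has genus at least $g(K)$, we obtain
\[ \maxtb(K) \geq c - s = 2g(F) - 1 \geq 2g(K) - 1. \]
Combining this with the upper bound forces $\maxtb(K) = 2g(K)-1$; as a byproduct the two inequalities pinch together to give $g(F) = g(K)$, recovering the fact that Seifert's algorithm is genus-minimizing on a positive diagram.

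I expect the main obstacle to be the second paragraph: verifying cleanly that the front produced from the Seifert circles has exactly $2s$ cusps and writhe exactly $w(D)$, so that its Thurston--Bennequin number is precisely $c-s$ rather than merely bounded by it. The genus computation and the Bennequin bound are routine once this construction is in place, and one should sanity-check the final formula on examples such as the right-handed trefoil ($c=3$, $s=2$, giving $\maxtb = 1 = 2g-1$) and positive torus knots ($\maxtb(T_{p,q}) = pq-p-q$) before committing to the write-up.
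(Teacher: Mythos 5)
Your upper bound via Bennequin and your Euler characteristic computation ($\chi = s-c$, so $c-s = 2g(F)-1$) are fine, but the front construction in your second paragraph has a genuine gap, and it is exactly where you predicted trouble. In a front projection the sign of a crossing is not free: the strand of lesser slope is always in front, and a short computation shows that a front crossing between two strands whose orientations have the \emph{same} sign of $dx$ is necessarily positive, while a crossing between strands moving in opposite $x$-directions is necessarily negative. So to realize all $c$ positive crossings of $D$ with only the $2s$ cusps of the ovals, every crossing must join two coherently oriented arcs stacked in the same $x$-direction, which forces the Seifert circles to be concentrically nested with crossings only between adjacent circles --- that is, it forces $D$ to be in braid position. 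Positive diagrams in braid position are exactly positive braid closures, and those are fibered; so for a positive knot that is not fibered (e.g.\ the twist knot $5_2$, or the pretzel knots $P(p,q,r)$ with $p,q,r$ odd and positive, whose Seifert graphs contain cycles), \emph{no} positive diagram admits your construction. Trying to repair this by first braiding the diagram (Vogel's algorithm) preserves the writhe and the number of Seifert circles but introduces negative crossings, and each negative crossing between coherently oriented strands costs an extra zigzag (two cusps) in the front, yielding $tb = c - s - k < c-s$. Nor can you route around the problem by passing through transverse representatives: the naive braiding argument does prove $\maxsl(K) = 2g(K)-1$ for positive knots, but $\maxsl = 2g-1$ does not imply $\maxtb = 2g-1$ --- the paper itself exhibits fibered strongly quasipositive knots, such as the $(3,2)$-cable of the trefoil, with $\maxsl(K) = 2g(K)-1$ but $\maxtb(K) < 2g(K)-1$.

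This is why the paper's proof takes a different and less elementary route: by Hayden--Sabloff, a positive knot admits an exact Lagrangian filling; by a theorem of Chantraine, Lagrangian fillability forces $\maxtb(K) = 2g_s(K)-1$; and by Rasmussen's $s$-invariant, $g_s(K) = g(K)$ for positive knots. The Lagrangian filling construction is precisely the nontrivial substitute for the front you tried to build directly from the Seifert circles. If you want to salvage your approach, you would need a construction of a $tb = 2g-1$ front from an arbitrary positive diagram (not just a positive braid diagram), which is substantially harder than the ``verification'' you anticipated; your argument as written proves the proposition only for positive braid closures.
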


\begin{proof}
Hayden--Sabloff \cite{hayden-sabloff} proved that if $K$ is positive then it admits a Lagrangian filling, hence by a theorem of Chantraine \cite{chantraine-concordance} it satisfies $\maxtb(K) = 2g_s(K)-1$, where $g_s(K)$ is the slice genus of $K$, and Rasmussen \cite{rasmussen-s} proved that $g_s(K)=g(K)$ for positive knots.
\end{proof}

It follows from this and item~\eqref{i:hyperbolic} of Theorem~\ref{thm:reducible-background} that if $n$-surgery on a positive knot $K$ is reducible, then either $K$ is a cable and $n$ is the cabling slope, or $K$ is hyperbolic and $n=2g(K)-1$; thus we have proved Theorem~\ref{thm:pos}.  (If $K$ is also hyperbolic, the claim that there are no essential punctured projective planes in its complement follows exactly as in \cite[Corollary 1.5]{hom-lidman-zufelt}.)  In the case $g(K)=2$, we can use Heegaard Floer homology to eliminate this last possibility as well.

\begin{theorem}
Positive knots of genus at most 2 satisfy the cabling conjecture.
\end{theorem}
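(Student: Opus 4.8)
The plan is to combine Theorem~\ref{thm:pos} with the $d$-invariant surgery formula of Ozsv\'ath--Szab\'o (and Ni--Wu) to rule out the one surgery slope that Theorem~\ref{thm:pos} leaves open. Let $K$ be a non-trivial positive knot with $g(K)\le 2$ and suppose $S^3_n(K)$ is reducible. Torus knots are cables of the unknot, so if $K$ is a cable the cabling conjecture holds by Theorem~\ref{thm:reducible-background}; thus I may assume $K$ is not a cable, hence hyperbolic, and Theorem~\ref{thm:pos} forces $n=2g(K)-1$. If $g(K)=1$ then $n=1$, which is impossible for a reducible surgery by Theorem~\ref{thm:reducible-background}\eqref{i:lens-summand}. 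So the entire problem reduces to showing that for a hyperbolic non-cable positive knot $K$ of genus $2$, the surgery $S^3_3(K)$ is irreducible. Assume for contradiction it is reducible; by Theorem~\ref{thm:reducible-background} we may write $S^3_3(K)=L(3,q)\#Y$, and since $|H_1(S^3_3(K))|=3$ is prime, $Y$ is an integer homology sphere.

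First I would extract the $d$-invariants. The correction terms are additive under connected sum, so the three values $d(S^3_3(K),i)$ equal $d(L(3,q),\cdot)+d(Y)$, where $d(Y)\in 2\ZZ$ because $Y$ is an integer homology sphere. On the other hand the integer surgery formula gives $d(S^3_3(K),i)=d(S^3_3(U),i)-2V_{\min(i,3-i)}$, where $V_0\ge V_1\ge V_2=0$ are the knot's local $h$-invariants with $V_{j-1}-V_j\in\{0,1\}$. A direct computation gives $\{d(S^3_3(U),i)\}=\{\tfrac12,-\tfrac16,-\tfrac16\}$, so the multiset of correction terms of $S^3_3(K)$ is $\{\tfrac12-2V_0,\,-\tfrac16-2V_1,\,-\tfrac16-2V_1\}$. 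The only lens spaces with $|H_1|=3$ are $\pm S^3_3(U)$, whose $d$-invariant sets are $\{\tfrac12,-\tfrac16,-\tfrac16\}$ and $\{-\tfrac12,\tfrac16,\tfrac16\}$. Matching the multiset above to $\{d(L(3,q),\cdot)\}+d(Y)$ modulo $1$ rules out the orientation $-S^3_3(U)$ (its two residues $\tfrac16$ cannot match the two residues $-\tfrac16$), and in the surviving case it forces $V_0=V_1$ and $d(Y)=-2V_0$; in particular $L(3,q)\cong S^3_3(U)$ as oriented manifolds, so $S^3_3(K)=S^3_3(U)\#Y$.

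Next I would use positivity to pin the invariants down completely. Since $K$ is positive, $\tau(K)=g(K)=2$, and as $\tau(K)>0$ we have $V_0>0$ (equivalently $\nu^+(K)>0$). Combined with $V_1\le V_2+1=1$ and $V_0=V_1$, this forces $V_0=V_1=1$ and $d(Y)=-2$. Thus $K$ has exactly the local $h$-invariants of $T_{2,5}$, and $S^3_3(K)=S^3_3(U)\#Y$ with $Y$ an integer homology sphere satisfying $d(Y)=-2$.

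Finally I would aim to contradict the existence of such a $K$. Because $3=2g(K)-1$ this is a large surgery, so $\rank\hfhat(S^3_3(K))=3+2(r_0+2r_1)$ for integers $r_s=\tfrac12(\rank H_*(\hat A_s)-1)\ge 0$, while additivity of $\hfhat$ under connected sum gives $\rank\hfhat(S^3_3(K))=3\,\rank\hfhat(Y)$. When $r_0=r_1=0$ the knot $K$ is an L-space knot of genus $2$; such knots are fibered with $\Delta_K(t)=t^2-t+1-t^{-1}+t^{-2}$, and I would invoke the classification of genus-$2$ L-space knots to conclude that $K$ is a torus knot, contradicting that $K$ is hyperbolic and non-cabled. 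The main obstacle is precisely this last step: eliminating the general (non-minimal) case, where the Floer data is consistent with $S^3_3(U)\#Y$. Here I expect to combine the surgery formula for the Casson--Walker invariant as in Corollary~\ref{cor:tb-negative-1-arf} --- which, since the lens-space contributions cancel, yields $\tfrac{\Delta''_K(1)}{2}=3\lambda(Y)\in 6\ZZ$ (using that $d(Y)=-2$ is even, so $\lambda(Y)$ is even) --- with the strong constraints that positivity places on the Alexander polynomial of a genus-$2$ knot, to show that no positive genus-$2$ Alexander polynomial is compatible with both $V_0=V_1=1$ and this divisibility. Carrying out this finite but delicate case analysis, together with supplying the genus-$2$ L-space knot classification, is the crux of the argument.
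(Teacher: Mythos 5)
Your reduction to the single slope $n=3$ for a hyperbolic genus-2 positive knot is correct (and your direct dismissal of genus 1 via $n=2g-1=1$ versus $|n|\geq 2$ is a clean alternative to the paper's citation), and your $d$-invariant analysis is also correct as far as it goes: matching the multiset $\{\tfrac12-2V_0,-\tfrac16-2V_1,-\tfrac16-2V_1\}$ against $d(L(3,q),\cdot)+d(Y)$ mod $1$ does force the summand to be $S^3_3(U)$ with $V_0=V_1=1$ and $d(Y)=-2$ (the paper never even pins down $q$). But this is where your argument stops being a proof: by your own admission the contradiction in the general case is not carried out, and the one concrete tool you propose to close it is broken. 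You claim $\frac{\Delta''_K(1)}{2}=3\lambda(Y)\in 6\ZZ$ ``using that $d(Y)=-2$ is even, so $\lambda(Y)$ is even.'' The $d$-invariant of \emph{every} integer homology sphere is even, and it does not control the parity of $\lambda$, which equals the Rokhlin invariant: $\Sigma(2,3,7)$ has $d=0$ but $\lambda$ odd. In Corollary~\ref{cor:tb-negative-1-arf} the evenness of $\lambda(Y)$ comes from $Y$ bounding a contractible manifold, which is supplied by Proposition~\ref{prop:decomp} only when $n<\maxtb(K)$; here $n=3=2g(K)-1=\maxtb(K)$ exactly, which is why that machinery (and Theorem~\ref{thm:main}) cannot reach this slope, and why no parity constraint on $\lambda(Y)$ is available. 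So you only get $\frac{\Delta''_K(1)}{2}\in 3\ZZ$, and even granting more, the proposed ``finite but delicate case analysis'' over genus-2 positive Alexander polynomials is neither finite on its face nor sketched: $V_0=V_1=1$ plus a divisibility on $a_2(K)$ simply does not contradict $S^3_3(K)=S^3_3(U)\#Y$ at the level of the invariants you are using. Indeed, your own matching shows the $d$-invariants are \emph{consistent} with such a splitting, so any contradiction must come from strictly finer data.

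That finer data is exactly what the paper's proof supplies, using positivity in ways your outline never does beyond $\tau=g$: positive genus-2 knots are quasi-alternating, hence $HFK$-thin, and have $\sigma=-4$ (after disposing of pretzels, for which the cabling conjecture is known). Thinness lets one compute $\hfp(S^3_3(K))$ completely in each $\spc$ structure, and non-fiberedness --- ruled in by Cromwell's crossing bound for fibered homogeneous knots plus a finite check, which also disposes of your L-space subcase without needing the genus-2 L-space knot classification (a result not available when this paper was written, though since proved by Baldwin--Sivek) --- forces $Y$ to be a non-L-space, so the reduced summands $\ZZ^{b_i}$ are nonzero. The contradiction is then that $\hfp(S^3_3(K),0)$ and $\hfp(S^3_3(K),1)$ are not isomorphic as \emph{relatively} graded $\ZZ[U]$-modules (the reduced part sits one grading below the tower bottom in one $\spc$ structure and level with it in the other), whereas the K\"unneth formula for $S^3_3(U)\#Y$ would force both to agree with $\hfp(Y)$. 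Your proposal would need an input of comparable strength --- some computation of the full relatively graded Floer homology, not just correction terms and Casson--Walker --- to be completable.
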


\begin{proof}
The cabling conjecture is true for genus 1 knots by \cite{boyer-zhang} (see also \cite{matignon-sayari, hom-lidman-zufelt}).  If $n$-surgery on the genus 2 positive knot $K$ is a counterexample then $K$ must be hyperbolic by Theorem~\ref{thm:reducible-background} (in particular, $K$ is prime) and $n=2g(K)-1 = 3$ by Theorem~\ref{thm:pos}.  As a positive knot of genus 2, $K$ is quasi-alternating \cite{jong-kishimoto}, hence it has thin knot Floer homology \cite{manolescu-ozsvath}.  The signature of $K$ is at most $-4$, since positive knots satisfy $\sigma(K) \leq -4$ unless they are pretzel knots \cite[Corollary 1.3]{przytycki-taniyama} and the cabling conjecture is known for pretzel knots \cite{luft-zhang} (in fact, for all Montesinos knots).  Since $\frac{|\sigma(K)|}{2}$ is a lower bound for the slice genus of $K$, and hence for $g(K)=2$, we have $\sigma(K) = -4$.

We claim that $K$ cannot be fibered.  Indeed, Cromwell \cite[Corollary 5.1]{cromwell} showed that fibered homogeneous knots have crossing number at most $4g(K)$, and since positive knots are homogeneous we need only check the knots with at most 8 crossings in KnotInfo \cite{cha-livingston} to verify that the $(2,5)$-torus knot is the only prime, fibered, positive knot of genus $2$, and it is not hyperbolic.  Since L-space knots are fibered \cite{ni-fibered}, the reducible $3$-surgery on $K$ cannot be an L-space.  Its lens space summand has order dividing 3, so it must be $L(3,q)$ for some $q$, and if we write $S^3_3(K) = L(3,q) \# Y$ for some homology sphere $Y$, then it follows from the K\"unneth formula for $\hfhat$ \cite{ozsvath-szabo-properties} that $Y$ is not an L-space.

Since $K$ is $HFK$-thin, the computation of $\hfp(S^3_3(K))$, the plus-flavor of Heegaard Floer homology, was carried out in the proof of \cite[Theorem 1.4]{ozsvath-szabo-alternating}; for the claim that the surgery coefficient $n=2g(K)-1$ is ``sufficiently large,'' see \cite[Section 4]{ozsvath-szabo-knots}, in particular Corollary~4.2 and Remark~4.3.  The result (up to a grading shift in each $\spc$ structure) depends only on the signature $\sigma(K)=-4$ and some integers $b_i$ determined by the Alexander polynomial of $K$ as follows:
\begin{align*}
\hfp(S^3_3(K), 0) &\cong \mathcal{T}^+_{-5/2} \oplus \ZZ^{b_0}_{(-7/2)} \\
\hfp(S^3_3(K), 1) &\cong \mathcal{T}^+_{-2} \oplus \ZZ^{b_1}_{(-2)}
\end{align*}
and $\hfp(S^3_3(K),2) \cong \hfp(S^3(K),1)$, where the numbers $0,1,2$ denote the different $\spc$ structures on $S^3_3(K)$ and the subscripts on the right denote the grading of either the lowest element of the tower $\mathcal{T}^+ = \ZZ[U,U^{-1}]/U \cdot \ZZ[U]$ or the $\ZZ^{b_i}$ summand.  The K\"{u}nneth formula for Heegaard Floer homology implies that each $\hfp(S^3_3(K),i)$ should be isomorphic to $\hfp(Y)$ as a relatively graded $\ZZ[U]$-module (with an absolute shift determined by the correction terms of $L(3,q)$), and in particular we must have $b_i \neq 0$ since $Y$ is not an L-space.  Thus the $\ZZ^{b_i}$ summands are nontrivial.  However, we see that $HF^+(S^3_3(K),0)$ and $HF^+(S^3_3(K),1)$ are not isomorphic as relatively-graded groups, by comparing the gradings of the $\ZZ^{b_i}$ summand to the gradings of the tower.  Thus, the corresponding $\hfp(S^3_3(K),i)$ cannot both be isomorphic to $\hfp(Y)$ and we conclude that $S^3_3(K)$ is not reducible after all.
\end{proof}

In general the condition $\maxtb = 2g-1$ can be fairly restrictive, as shown by the following.

\begin{proposition}
If $K$ is fibered and $\maxtb(K)=2g(K)-1$, then $K$ is strongly quasipositive.
\end{proposition}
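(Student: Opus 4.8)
The plan is to reduce the statement to the single numerical consequence $\tau(K)=g(K)$ for the Ozsv\'ath--Szab\'o concordance invariant, and then to import Hedden's characterization of fibered strongly quasipositive knots. In this way the only work we must do ourselves is an elementary inequality chase, while the substantive content is quoted from the literature.

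First I would record that the hypothesis $\maxtb(K)=2g(K)-1$ forces $\tau(K)=g(K)$. Choose a Legendrian representative $L$ of $K$ with $tb(L)=\maxtb(K)=2g(K)-1$. The $\tau$-strengthened slice--Bennequin inequality \cite{plamenevskaya-slice-bennequin} gives
\[ 2g(K)-1 = tb(L) \leq tb(L)+|r(L)| \leq 2\tau(K)-1, \]
so $\tau(K)\geq g(K)$; combined with the general inequalities $\tau(K)\leq g_s(K)\leq g(K)$ this yields $\tau(K)=g(K)$ (and incidentally that $r(L)=0$ and $g_s(K)=g(K)$, which we will not need).

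With $\tau(K)=g(K)$ established and $K$ fibered, I would then apply Hedden's theorem \cite{hedden-positivity}: a fibered knot is strongly quasipositive if and only if the contact structure $\xi_K$ supported by the open book of its fibration is the tight structure $\xi_{\mathrm{std}}$, which in turn holds if and only if $\tau(K)=g(K)$. This immediately gives the conclusion. The entire argument on our side is the inequality chase, so the ``hard part'' is not a computation but the correct invocation of this deep input, and in particular being careful that it is $\tau=g$ (rather than merely $g_s=g$) that feeds Hedden's criterion; this is consistent with the paper's own use of \cite{hedden-positivity} to deduce that L-space knots are strongly quasipositive. As an alternative route avoiding $\tau$, one could instead note that $\maxtb(K)=2g(K)-1$ implies $\maxsl(K)=2g(K)-1$ (the positive transverse pushoff of the $r=0$, $tb$-maximizing representative realizes the Bennequin bound) and invoke the Etnyre--Van Horn-Morris criterion \cite{etnyre-vhm} that a fibered knot meeting the Bennequin bound supports $\xi_{\mathrm{std}}$, again feeding into Hedden's characterization.
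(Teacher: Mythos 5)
Your proof is correct and follows essentially the same route as the paper: both arguments combine Plamenevskaya's bound $tb(L)+|r(L)| \leq 2\tau(K)-1$ with the Livingston--Hedden characterization of fibered strongly quasipositive knots via $\tau(K)=g(K)$, with yours running the implication directly where the paper phrases it contrapositively. The extra observations (that $r(L)=0$, $g_s(K)=g(K)$, and the alternative route through $\maxsl$ and \cite{etnyre-vhm}) are sound but not needed.
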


\begin{proof}
Livingston \cite{livingston} and Hedden \cite{hedden-positivity} showed that for fibered knots we have $\tau(K) = g(K)$ if and only if $K$ is strongly quasipositive, where $\tau$ is the Ozsv{\'a}th--Szab{\'o} concordance invariant, which always satisfies $|\tau(K)| \leq g(K)$ \cite{ozsvath-szabo-four-ball}.  On the other hand, Plamenevskaya \cite{plamenevskaya-slice-bennequin} proved that $tb(K)+|r(K)| \leq 2\tau(K)-1$ for any Legendrian representative of $K$.  In particular, if $K$ is fibered and not strongly quasipositive then $\maxtb(K) \leq 2\tau(K)-1 < 2g(K)-1$.
\end{proof}

It is not true that all fibered, strongly quasipositive knots satisfy $\maxtb(K)=2g(K)-1$.  One example is the $(3,2)$-cable of the right-handed trefoil $T$: letting $K = C_{3,2}(T)$, Etnyre and Honda \cite{etnyre-honda-cabling} showed that $\maxtb(K) = 6$ (which also follows from Corollary~\ref{cor:maxtbcables}) but $\maxsl(K) = 7 = 2g(K)-1$, and since $K$ is fibered the latter implies by \cite{hedden-positivity} that it is strongly quasipositive.  Etnyre, LaFountain, and Tosun \cite{etnyre-lafountain-tosun} provided many other examples as cables $C_{r,s}(T_{p,q})$ of positive torus knots, but in all such cases we have $\maxtb(K) < 2g(K)-1$ only if $\frac{s}{r} < 2g(T_{p,q})-1$, in which case $K$ is not an L-space knot \cite{hom-cabling}.

For a hyperbolic example, let $K$ be the closure of the strongly quasipositive 3-braid
\begin{align}
\label{eq:hyperbolic-braid}
\beta = \sigma_1^2 \sigma_2^3 (\sigma_1 \cdot \sigma_1 \sigma_2 \sigma_1^{-1} \cdot \sigma_2)^3,
\end{align}
cf.\ \cite[Remark~5.1]{stoimenow}.  Since $K$ is the closure of a 3-braid and its Alexander polynomial
\[ \Delta_K(t) = t^{-6} - 2t^{-4} + 3t^{-3} - 2t^{-2} + 1 - 2t^2 + 3t^3 - 2t^4 + t^6  \]
is monic, we know that $K$ is fibered \cite[Corollary~4.4]{stoimenow} with Seifert genus $g(K)= 6$.
\begin{lemma}
The knot $K$ defined as the closure of \eqref{eq:hyperbolic-braid} is hyperbolic.
\end{lemma}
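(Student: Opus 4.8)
The statement that a specific knot given by an explicit braid word is hyperbolic is exactly the sort of thing one establishes by feeding the braid into \texttt{SnapPy}. First I would take the $3$-braid $\beta = \sigma_1^2 \sigma_2^3 (\sigma_1 \cdot \sigma_1 \sigma_2 \sigma_1^{-1} \cdot \sigma_2)^3$, form its closure, and build the corresponding link complement in \texttt{SnapPy}. One then asks the software to find and verify a complete hyperbolic structure on the complement $S^3 \setminus K$. In practice this means computing an approximate hyperbolic structure by solving Thurston's gluing equations on an ideal triangulation, and then confirming rigorously that the complement admits such a structure.

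\emph{Promoting the numerical computation to a proof} is the real content, and this is the step I expect to be the main obstacle. A raw \texttt{SnapPy} calculation produces only a floating-point solution to the gluing equations, which by itself is not a proof. To upgrade it to a theorem I would invoke \texttt{SnapPy}'s \texttt{verify} module (building on the interval-arithmetic methods of \texttt{HIKMOT} and the work of Hoffman--Ichihara--Kashiwagi--Masai--Oshika--Takasawa), which uses interval Newton methods to produce a rigorous, computer-verified proof that the approximate solution lies near a genuine solution with all tetrahedra positively oriented. By Mostow--Prasad rigidity and Thurston's geometrization for knot complements, a finite-volume complete hyperbolic structure on $S^3 \setminus K$ certifies that $K$ is a hyperbolic knot. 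An alternative or corroborating route is simply to compute the hyperbolic volume and show it is nonzero, together with checking that the complement is not Seifert fibered and has no essential tori; but the interval-arithmetic verification of a complete hyperbolic structure is cleaner and directly conclusive.

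\emph{The write-up} would therefore be short: I would state that the complement of $K$ was triangulated and verified to be hyperbolic using \texttt{SnapPy}, cite the relevant software and verification references, and record the computed volume as a sanity check. No hand calculation is needed beyond confirming that the braid closure we input really is the knot $K$ under discussion. The only genuine subtlety is ensuring the verification is rigorous rather than merely numerical, which is handled entirely by the interval-arithmetic certification rather than by any argument we would produce by hand.
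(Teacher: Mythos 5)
Your proof is correct in outline, but it takes a genuinely different route from the paper. The paper avoids computation entirely: it argues by hand that $K$ is hyperbolic by ruling out the other two cases of Thurston's trichotomy. The torus-knot case is excluded because $\Delta_K(t)$ is not the Alexander polynomial of any torus knot, and the satellite case is excluded via the satellite formula $\Delta_K(t) = \Delta_C(t^w)\Delta_P(t)$: since $K$ is fibered, both companion and pattern are fibered with $w \neq 0$ (Hirasawa--Murasugi--Silver), and irreducibility of $\Delta_K(t)$ forces $\Delta_P(t)=1$ and $\Delta_K(t)=\Delta_C(t^w)$, whence $w = \pm 1$ by inspection of the exponents; a fibered pattern with trivial Alexander polynomial and winding number $1$ is isotopic to the core of the solid torus, so the satellite is trivial. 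Your \texttt{SnapPy}-with-verified-interval-arithmetic approach is also a legitimate proof --- indeed, with a certified complete hyperbolic structure you do not even need geometrization, since exhibiting the metric directly establishes hyperbolicity (geometrization is only needed for the converse direction, which your argument never uses) --- and it has the advantage of being turnkey and of producing the volume as a byproduct. What the paper's argument buys in exchange is independence from software correctness and from faithful transcription of the braid word, plus a reusable technique: the same Alexander-polynomial-irreducibility trick rules out satellites for any fibered knot with irreducible monic $\Delta_K$, whereas your certificate applies only to the one knot fed to the machine. One small caution about your alternative route: computing a nonzero volume from an unverified numerical solution proves nothing by itself, so the interval-arithmetic certification is not merely the cleaner option but the only rigorous one among the computational variants you mention.
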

\begin{proof}
It suffices to check that $K$ is not a satellite, since $\Delta_K(t)$ is not the Alexander polynomial of a torus knot.  If $K$ is a satellite with companion $C$ and pattern $P$, and $P$ has winding number $w$ in the solid torus, then $\Delta_K(t) = \Delta_C(t^w)\Delta_P(t)$.  Since $K$ is fibered, both $C$ and $P$ are fibered and $w\neq 0$ \cite[Theorem~1]{hirasawa-murasugi-silver}, and in particular $\Delta_C(t^w)$ is not constant since it has degree $w\cdot g(C)$.  Since $\Delta_K(t)$ is irreducible, it follows that $\Delta_P(t)=1$ and $\Delta_K(t) = \Delta_C(t^w)$, which by inspection implies $w=\pm 1$.  Since $P$ is fibered with trivial Alexander polynomial, it is unknotted in $S^3$; but since it also has winding number 1 it must then be isotopic to the core of the solid torus \cite[Corollary~1]{hirasawa-murasugi-silver}, and so $K$ cannot be a nontrivial satellite.
\end{proof}
We have the bound $\maxtb(K) \leq -\operatorname{max-deg}_a F_K(a,z)-1 = 10$ where $F_K$ is the Kauffman polynomial of $K$ \cite{rudolph-congruence}, and so $\maxtb(K) < 2g(K)-1$.  We note that $K$ is not an L-space knot since the coefficients of $\Delta_K(t)$ are not all $\pm 1$ \cite{ozsvath-szabo-lens}, so Conjecture~\ref{conj:maxtblspace} remains intact.

As evidence for Conjecture~\ref{conj:maxtblspace}, we show that it is satisfied by all knots which are known to admit positive lens space surgeries, i.e.\ the twelve families of Berge knots \cite{berge-conjecture}.  (Berge knots are strongly invertible by a result of Osborne \cite{osborne}, cf.\ \cite{watson}, so they are already known to satisfy the cabling conjecture \cite{eudave-munoz}.)

\begin{proposition}
\label{prop:berge-tb}
If $K$ is a Berge knot, reflected if necessary so that it has a positive lens space surgery, then $\maxtb(K) = 2g(K)-1$.
\end{proposition}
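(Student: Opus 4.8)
The plan is to combine the lower bound $\maxtb(K) \geq 0$ (which holds because Berge knots are strongly quasipositive, being L-space knots, by Rudolph's inequality \cite{rudolph-sliceness}) with the Bennequin-type upper bound $\maxtb(K) \leq 2g(K)-1$, and then to \emph{force equality} by exploiting the existence of the lens space surgery. The key structural input is that a Berge knot $K$ admits a positive lens space surgery, say $S^3_N(K) = L(p,q)$ with $N = p > 0$. Since this is in particular a reducible-or-lens-space surgery with a single lens space summand and no homology-sphere factor, the machinery of Section~\ref{sec:main} applies directly: if we could perform this as a \emph{Legendrian} surgery, Proposition~\ref{prop:reducible-d3-value} and the counting argument of Theorem~\ref{thm:main} would constrain things. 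The obstruction is that Theorem~\ref{thm:main} rules out reducible surgeries with slope $n < \maxtb(K)$, whereas here the surgery slope is $N = p > 0 > \maxtb(K)-1$ in general, so the surgery is \emph{not} Legendrian and Theorem~\ref{thm:main} does not directly apply.

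\medskip

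\emph{First} I would establish the genus bound explicitly. Since $K$ is an L-space knot it is fibered and strongly quasipositive, so $\maxsl(K) = 2g(K)-1$ \cite{etnyre-vhm}, and hence $\maxtb(K) \leq \maxsl(K) + 1 - 1$ reduces via the slice-Bennequin inequality $tb(K) + |r(K)| \leq 2g_s(K) - 1 = 2g(K)-1$ (using $g_s = g$ for these knots) to $\maxtb(K) \leq 2g(K)-1$. \emph{Next}, to pin down equality, the natural route is the relationship between the lens space surgery slope and the genus: for an L-space knot, a classical result (Rasmussen, Ozsv\'ath--Szab\'o) forces the positive L-space surgery slope to satisfy $N \geq 2g(K)-1$, and in fact the minimal L-space slope is exactly $2g(K)-1$. \emph{Then} I would argue that if $\maxtb(K) < 2g(K)-1$, the integer slope $n = 2g(K)-1$ still lies in the Legendrian range (i.e.\ $n \leq \maxtb(K)-1$ fails), so I instead want to run the $d_3$-counting argument on the lens space $L(p,q)$ directly at the \emph{actual} surgery slope and derive a contradiction with the classification of tight contact structures.

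\medskip

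\emph{The heart of the argument} should be a $d_3$-invariant computation entirely parallel to Proposition~\ref{prop:reducible-d3-value}, but adapted to the specific Berge-knot lens space surgery. Concretely, for each Legendrian representative of $K$ with maximal $tb$ and its stabilizations, attaching the corresponding $2$-handle to $B^4$ produces a Stein filling of a tight contact structure on $L(p,q)$ whose $d_3$ invariant is determined by the rotation number $r$ via a formula of the shape $d_3 = -\tfrac{1}{4p}(r^2 + \text{const})$. If $\maxtb(K) = 2g(K)-1$ is \emph{not} achieved, then the available rotation numbers at the top $tb$ level would produce more distinct $d_3$ values than $L(p,q)$ has conjugacy classes of tight contact structures (bounded by Proposition~\ref{prop:count-xi}), exactly as in the proof of Theorem~\ref{thm:main} --- yielding the inequality $\lceil\frac{p+1}{2}\rceil \leq \lceil\frac{p-1}{2}\rceil$, which is absurd.

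\medskip

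\emph{The main obstacle} I anticipate is that the lens space surgery here has \emph{positive} slope, so the Stein-filling argument of Section~\ref{sec:background} cannot be invoked as a black box (Proposition~\ref{prop:decomp} concluded $n < 0$ via Lisca's theorem that Stein fillings of lens spaces are negative-definite). One must therefore either pass to the mirror (replacing $K$ by $\overline{K}$, turning the positive lens space surgery into a negative one on a knot with controlled $\maxtb$) and carefully track how $\maxtb$, the genus, and the rotation numbers transform, or else go through the twelve Berge families case-by-case, using that Berge knots are doubly primitive and hence have explicit genus and $tb$ data. I expect the cleanest route is the explicit one: verify via the classification of Berge knots (all of which are either torus knots, cables, or arise from specific surgery descriptions) that each family is realized by a Legendrian representative attaining $tb = 2g-1$, reducing the proposition to a finite check together with the already-established bound $\maxtb(K) \leq 2g(K)-1$.
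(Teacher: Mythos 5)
There is a genuine gap, and it sits exactly where you located ``the heart of the argument.'' A lens space is prime, so $S^3_N(K) = L(p,q)$ is \emph{not} a reducible manifold, and none of the reducible-surgery machinery --- Proposition~\ref{prop:decomp}, Proposition~\ref{prop:reducible-d3-value}, or the counting scheme of Theorem~\ref{thm:main} --- says anything about it. Moreover, in the very situation you want to rule out ($\maxtb(K) < 2g(K)-1$), the slope satisfies $N \geq 2g(K)-1 > \maxtb(K)-1$, so the trace is not presented as a Stein domain, and no Stein filling of $L(p,q)$ with $b_2^+ > 0$ can exist anyway by Theorem~\ref{thm:lisca-fillings}. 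Passing to the mirror does not rescue the plan: running the $d_3$-count on $\overline{K}$ at slope $-N$ is precisely the Etnyre--Honda technique mentioned before Corollary~\ref{cor:maxtbcables}, and what it yields is an \emph{upper} bound on $\maxtb(\overline{K})$; it produces no lower bound on $\maxtb(K)$, which is what equality requires (your Bennequin-type step correctly supplies only the upper bound $\maxtb(K) \leq 2g(K)-1$). No count of tight contact structures on $L(p,q)$ can certify that the maximum of $tb$ is \emph{attained}; one must prove $\maxtb(K) \geq 2g(K)-1$ directly from a positivity or filling property of $K$ itself.

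Your fallback --- going through the twelve Berge families --- is in fact the paper's route, but as sketched it has no content. The families are infinite, so there is no ``finite check,'' and most Berge knots are hyperbolic, so the claim that they are ``torus knots, cables, or specific surgery descriptions'' with explicit $tb$ data is not usable as stated. What actually closes the argument in the paper: families I--VI are the Berge--Gabai knots \cite{berge-solid-torus, gabai-solid-torus}, which after the stated reflection are torus knots or 1-bridge braids and hence braid positive; families VII--VIII are braid positive by Baker \cite[Appendix~B]{baker-thesis}; in all these cases Proposition~\ref{prop:max-tb-positive-knot} --- resting on Lagrangian fillability of positive knots \cite{hayden-sabloff}, Chantraine's equality $\maxtb = 2g_s - 1$ for fillable knots \cite{chantraine-concordance}, and Rasmussen's $g_s = g$ for positive knots \cite{rasmussen-s} --- gives $\maxtb(K) = 2g(K)-1$. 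The sporadic families IX--XII are divide knots \cite{yamada-sporadic}, so Ishikawa's theorem \cite{ishikawa} gives $\maxtb(K) = 2g_s(K)-1$, and $g_s(K) = g(K)$ because $K$ is an L-space knot \cite{ozsvath-szabo-lens}. Without these structural inputs (braid positivity for eight families, divide-knot structure for the other four), your outline does not constitute a proof.
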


\begin{proof}
Families I--VI are the Berge-Gabai knots \cite{berge-solid-torus, gabai-solid-torus}, which are knots in $S^1 \times D^2$ with nontrivial $S^1 \times D^2$ surgeries, and these are known (after possibly reflecting as mentioned above) to be braid positive since they are either torus knots or 1-bridge braids.  Families VII and VIII are knots on the fiber surface of a trefoil or figure eight, respectively, and Baker \cite[Appendix B]{baker-thesis} showed that they are braid positive as well.  Thus in each of these cases the result follows from Proposition~\ref{prop:max-tb-positive-knot}.  The remaining ``sporadic'' knots, families IX--XII, have $\maxtb=2g-1$ because they are all divide knots \cite{yamada-sporadic}, hence they satisfy $\maxtb(K) = 2g_s(K)-1$ \cite{ishikawa}, and as L-space knots they have $g_s(K)=g(K)$ \cite{ozsvath-szabo-lens}, and the result follows.
\end{proof}

We compile further evidence for Conjecture~\ref{conj:maxtblspace} by reducing it to the case of non-cabled knots as follows.  We recall that L-space knots are fibered \cite{ni-fibered}.  Moreover, torus knots are L-space knots if and only if they are positive, in which case they satisfy $\maxtb(K)=2g(K)-1$ by Proposition~\ref{prop:max-tb-positive-knot}.  Finally, the cable $C_{p,q}(K)$ of some nontrivial $K$ is an L-space knot if and only if $K$ is an L-space knot and $\frac{q}{p} \geq 2g(K)-1$ \cite{hedden-cabling2, hom-cabling}, in which case we can apply the following.

\begin{proposition}
\label{prop:fibered-2g-1-cables}
If $K$ is fibered and nontrivial and $\maxtb(K)=2g(K)-1$, then $\maxtb(C_{p,q}(K)) = 2g(C_{p,q}(K)) - 1$ whenever $\frac{q}{p} \geq 2g(K)-1$.
\end{proposition}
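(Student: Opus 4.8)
The plan is to sandwich $\maxtb(C_{p,q}(K))$ between the Bennequin upper bound and the lower bound furnished by the explicit cable front construction of Corollary~\ref{cor:maxtbcables}, and then to observe that under the stated hypotheses these two bounds coincide. Write $K'=C_{p,q}(K)$. Throughout I will use that $K$ nontrivial forces $g(K)\geq 1$, so $\maxtb(K)=2g(K)-1\geq 1\geq 0$, and that the hypothesis $\frac{q}{p}\geq 2g(K)-1$ is exactly the inequality $q\geq p\cdot\maxtb(K)$. In particular $q\geq p\geq 2>0$, so the standing assumptions $p\geq 2$, $\gcd(p,q)=1$, $q\neq -1$ of Corollary~\ref{cor:maxtbcables} are all in force.

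First I would record the genus of the cable. Since $K$ is fibered the cable $K'$ is fibered as well (for $q\neq 0$), and its fiber realizes the standard Seifert genus formula
\[ g(K') = p\,g(K) + \frac{(p-1)(q-1)}{2}, \]
so that
\[ 2g(K')-1 = 2p\,g(K) + (p-1)(q-1) - 1 = 2p\,g(K) + pq - p - q. \]
Bennequin's inequality then immediately supplies the upper bound $\maxtb(K')\leq 2g(K')-1$.

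The heart of the argument is the algebraic identity that the lower bound of Corollary~\ref{cor:maxtbcables} equals precisely $2g(K')-1$. Substituting $\maxtb(K)=2g(K)-1$, i.e.\ $2p\,g(K)=p\cdot\maxtb(K)+p$, into the display above yields
\[ 2g(K')-1 = \bigl(p\cdot\maxtb(K)+p\bigr) + pq - p - q = pq - \bigl(q - p\cdot\maxtb(K)\bigr). \]
Because $q\geq p\cdot\maxtb(K)$, Corollary~\ref{cor:maxtbcables} applies in the relevant regime: when $q>p\cdot\maxtb(K)$ its second bullet gives $\maxtb(K')\geq pq-(q-p\cdot\maxtb(K))=2g(K')-1$, while in the boundary case $q=p\cdot\maxtb(K)$ the construction directly produces a front with $tb=p^2\cdot\maxtb(K)=pq=2g(K')-1$, since no $\frac{1}{p}$-twists need be inserted. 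Combining either lower bound with the Bennequin upper bound forces $\maxtb(K')=2g(K')-1$.

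I expect the only genuinely delicate point to be confirming that the hypotheses place us in the correct regime of Corollary~\ref{cor:maxtbcables}. It is essential both that $\maxtb(K)$ be \emph{exactly} $2g(K)-1$, so that $2g(K')-1$ collapses to the construction's lower bound, and that $q\geq p\cdot\maxtb(K)$, so that the sharp bullet of the corollary is the one in play. Indeed, in the complementary range $q<p\cdot\maxtb(K)$ one computes $2g(K')-1 = pq+\bigl(p\cdot\maxtb(K)-q\bigr) > pq = \maxtb(K')$, so the conclusion genuinely fails there; this confirms that the threshold $\frac{q}{p}\geq 2g(K)-1$ is necessary and not merely convenient. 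The one small bookkeeping step is the separate treatment of the boundary case $q=p\cdot\maxtb(K)$ carried out above, which is needed only because Corollary~\ref{cor:maxtbcables} is stated with strict inequalities.
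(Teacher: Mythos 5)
Your proof is correct and takes essentially the same route as the paper's: the fibered-cable genus formula, the identity $2g(C_{p,q}(K))-1 = pq - (q - p\cdot\maxtb(K))$ under the hypothesis $\maxtb(K)=2g(K)-1$, the lower bound from Corollary~\ref{cor:maxtbcables}, and Bennequin's inequality as the matching upper bound. The only difference is your separate treatment of the boundary case $q = p\cdot\maxtb(K)$, which the paper dispenses with in one line by noting that $\gcd(p,q)=1$ and $p \geq 2$ force the strict inequality $q > p\cdot\maxtb(K)$, so that case is vacuous and the second bullet of Corollary~\ref{cor:maxtbcables} always applies.
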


\begin{proof}
Because $C_{p,q}(K) = C_{-p,-q}(K)$, we assume without loss of generality that $p, q > 0$; then the inequality $\frac{q}{p} \geq 2g(K)-1 = \maxtb(K)$ implies $q > p\cdot\maxtb(K)$, since $\gcd(p,q)=1$.  Corollary~\ref{cor:maxtbcables} then says that $\maxtb(K') \geq pq - (q-p\cdot \maxtb(K))$, where $K' = C_{p,q}(K)$.

Since $K$ is fibered, its cable $K'$ is as well and their Seifert genera are known to be related by $g(K') = p\cdot g(K) + \frac{(p-1)(q-1)}{2}$, which is equivalent to
\[ 2g(K')-1 = pq - (q - p(2g(K)-1)) = pq - (q-p\cdot\maxtb(K)) \leq \maxtb(K'). \]
But Bennequin's inequality implies that $\maxtb(K') \leq 2g(K')-1$, so the two must be equal.
\end{proof}

\section{Minimal $d_3$ invariants of tight contact structures}
\label{sec:min-d3}

Although the proof of Theorem~\ref{thm:main} in Section~\ref{ssec:proof-main} is short and simple, it does not seem easily adaptable to the case $\maxtb(K) < 0$.  In this section we will undertake a more careful study of the values of $d_3$ invariants of tight contact structures on $L(p,q)$, which will be used in Section~\ref{sec:negative-tb} to prove Theorem~\ref{thm:tb-negative-thm}.  The main results of this section are Proposition~\ref{prop:d3-xican-minimal}, which asserts that out of all tight contact structures on $L(p,q)$, the ones which minimize $d_3(\xi)$ are precisely $\xican$ and its conjugate; and Proposition~\ref{prop:f-recurrence-eqn}, which presents a recurrence relation for $d_3(\xican)$.  As a quick application, we conclude this section with a second proof of Theorem~\ref{thm:main}.

\subsection{Tight contact structures on $L(p,q)$ with minimal $d_3$ invariants}
\label{ssec:minimal-d3}

Following Section~\ref{subsec:tight-lens}, each tight contact structure $\xi_{\rv}$ on $L(p,q)$ is equipped with a Stein filling $(X,J_{\rv})$.  We will determine $d_3(\xi_{\rv})$ by using \eqref{eq:d_3-lens-gompf}, which means that we must compute $c_1(X,J_{\rv})^2$.  This requires a slight generalization of the argument of Proposition~\ref{prop:reducible-d3-value} as follows (see e.g. \"{O}zba\u{g}c\i--Stipsicz \cite{ozbagci-stipsicz}).

We note that $H_2(X) = \ZZ^n$ is generated by classes $[\Sigma_1],\dots,[\Sigma_n]$, where each $\Sigma_i$ is generated by taking a Seifert surface for the $i$th unknot in the chain and capping it off with the core of the corresponding $2$-handle.  In this basis the intersection form on $X$ is given by the linking matrix
\[ M_{p/q} = \left(\begin{array}{cccccc}
a_1 & 1 & 0 & \cdots & 0 & 0 \\
1 & a_2 & 1 & \cdots & 0 & 0 \\
0 & 1 & a_3 & \cdots & 0 & 0 \\
\vdots & \vdots & \vdots & \ddots & \vdots & \vdots \\
0 & 0 & 0 & \cdots & a_{n-1} & 1 \\
0 & 0 & 0 & \cdots & 1 & a_n
\end{array}\right). \]
Since $M_{p/q}$ is negative definite and also presents $H_1(\partial X) = H_1(L(p,q)) = \ZZ/p\ZZ$, we know that $\det(M_{p/q}) = (-1)^n p$.  In general, if $M$ is a $k\times k$ tridiagonal matrix of this form with entries $b_1,\dots,b_k$ along the diagonal, we will write
\[ d(b_1,\dots,b_k) = |\det(M)|. \]
For example, $d(a_1,\dots,a_n) = p$.

The group $H_2(X,\partial X)$ is generated by homology classes $[D_1],\dots,[D_n]$ of disks, where each $D_i$ is the cocore of the $i$th handle, and according to Gompf \cite{gompf} we have
\[ c_1(X,J_{\rv}) = PD\left(\sum_{i=1}^n r_i [D_i]\right). \]
If $c = PD(c_1(X,J_{\rv})) \in H_2(X,\partial X)$, then $pc$ is represented by a surface whose boundary is nullhomologous in $H_1(\partial X) = \ZZ/p\ZZ$, so $pc$ lifts to a class $C \in H_2(X)$.  If $C$ is represented by a vector $\sv$ in the given basis of $H_2(X)$ then we have $M_{p/q}\sv = p\rv$, and so
\[ p^2 c^2 = C^2 = \sv^T M_{p/q} \sv = (p\cdot M_{p/q}^{-1}\rv)^T M_{p/q} (p\cdot M_{p/q}^{-1} \rv), \]
hence $c^2 = \rv^T \cdot \left(M^{-1}_{p/q}\right) \cdot \rv$.

Let $A_{p/q} = -p \cdot M_{p/q}^{-1}$.  Using equation \eqref{eq:d_3-lens-gompf} and the above computation of $c_1(X,J_{\rv})^2$, we can determine the $d_3$ invariant of the contact structure $\xi_{\rv}$ on $L(p,q)$ by the formula
\begin{equation}
\label{eq:d3-formula-apq}
d_3(\xi_{\rv}) = \frac{-\frac{1}{p} (\rv^T A_{p/q} \rv) + n-2}{4}.
\end{equation}
We note that $A_{p/q}$ is positive definite, since $M_{p/q}^{-1}$ is negative definite.  Moreover, the entries of $A_{p/q}$ are all integers because $\det(M_{p/q}) = \pm p$, and so these entries are simply the cofactors of the integer matrix $M_{p/q}$ up to sign.  In fact, it is not so hard to explicitly determine the entries of $A_{p/q}$, which when combined with \eqref{eq:d3-formula-apq} will be key for our computation of $d_3$ invariants.

\begin{proposition}
\label{prop:a-pq-entries}
If $-\frac{p}{q} = [a_1,\dots,a_n]$, then $A_{p/q}$ has the form
\[
\left(\begin{array}{ccccc}
d()d(a_2,\dots,a_n) & d()d(a_3,\dots,a_n) & d()d(a_4,\dots,a_n) & \dots & d()d() \\
d()d(a_3,\dots,a_n) & d(a_1)d(a_3,\dots,a_n) & d(a_1)d(a_4,\dots,a_n) & \dots & d(a_1)d() \\
d()d(a_4,\dots,a_n) & d(a_1)d(a_4,\dots,a_n) & d(a_1,a_2)d(a_4,\dots,a_n) & \dots & d(a_1,a_2)d() \\
\vdots & \vdots & \vdots & \ddots & \vdots \\
d()d() & d(a_1)d() & d(a_1,a_2)d() & \dots & d(a_1,\dots,a_{n-1})d()
\end{array}\right), \]
where we define $d()=1$.  In other words, $A_{p/q}$ is symmetric, and for all $i\leq j$ the $(i,j)$th entry of $A_{p/q}$ is the positive integer $d(a_1,\dots,a_{i-1})d(a_{j+1},\dots,a_n)$.
\end{proposition}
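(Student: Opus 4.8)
The plan is to recover the entries of $A_{p/q} = -p\,M_{p/q}^{-1}$ from the cofactors of the tridiagonal matrix $M_{p/q}$ and to exploit the tridiagonal structure to factor each relevant minor. Since $M_{p/q}$ is symmetric, so is $M_{p/q}^{-1}$, and hence so is $A_{p/q}$; thus it suffices to treat the entries with $i \le j$. Writing $M_{p/q}^{-1} = \det(M_{p/q})^{-1}\operatorname{adj}(M_{p/q})$ and recalling that $\det(M_{p/q}) = (-1)^n p$, Cramer's rule gives $(A_{p/q})_{ij} = -p\,(M_{p/q}^{-1})_{ij} = (-1)^{i+j+n+1}\mu_{ji}$, where $\mu_{ji}$ denotes the minor obtained by deleting row $j$ and column $i$. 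So the computation reduces to evaluating $\mu_{ji}$ for $i \le j$.

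The key step is a block-and-peel argument showing that $\mu_{ji} = \det M(a_1,\dots,a_{i-1})\cdot\det M(a_{j+1},\dots,a_n)$, where $M(b_1,\dots,b_k)$ denotes the $k\times k$ tridiagonal matrix with diagonal $b_1,\dots,b_k$ and off-diagonal entries $1$ (the empty matrix having determinant $1$). First, because column $i$ has been deleted and $M_{p/q}$ is tridiagonal, every row indexed by $\{1,\dots,i-1\}$ has all of its surviving nonzero entries in columns $\{1,\dots,i-1\}$; hence the minor is block lower triangular with leading block $M(a_1,\dots,a_{i-1})$, and $\mu_{ji}$ equals $\det M(a_1,\dots,a_{i-1})$ times the determinant of the complementary block on rows $\{i,\dots,n\}\setminus\{j\}$ and columns $\{i+1,\dots,n\}$. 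In that complementary block, row $i$ has a single surviving nonzero entry, namely the value $1$ in column $i+1$, which sits in the upper-left corner; so Laplace expansion along it contributes a factor $+1$ and deletes row $i$ and column $i+1$. Iterating this expansion along rows $i, i+1,\dots,j-1$ peels away the middle chain, each step contributing $+1$, until only the trailing block $M(a_{j+1},\dots,a_n)$ on rows and columns $\{j+1,\dots,n\}$ remains.

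Finally I would handle the signs. Each leading or trailing principal submatrix of $M_{p/q}$ is itself negative definite, so $\det M(a_1,\dots,a_{i-1}) = (-1)^{i-1} d(a_1,\dots,a_{i-1})$ and $\det M(a_{j+1},\dots,a_n) = (-1)^{n-j} d(a_{j+1},\dots,a_n)$. Substituting these into $(A_{p/q})_{ij} = (-1)^{i+j+n+1}\mu_{ji}$ and collecting the total sign $(-1)^{(i+j+n+1)+(i-1)+(n-j)} = (-1)^{2i+2n} = +1$ yields $(A_{p/q})_{ij} = d(a_1,\dots,a_{i-1})\,d(a_{j+1},\dots,a_n)$, the claimed positive integer, and symmetry of $A_{p/q}$ extends the formula to $i > j$.

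I expect the main obstacle to be the bookkeeping in the peeling step: verifying that throughout the iterated Laplace expansion each successive middle row really does have exactly one surviving nonzero entry, and that this entry always lands in the first available column so that the cofactor sign is $+1$, together with the parallel sign tracking coming from negative definiteness. None of this is deep, but it is exactly where an off-by-one in the indices would propagate into a wrong sign; a useful consistency check is that the diagonal entries $(A_{p/q})_{ii} = d(a_1,\dots,a_{i-1})\,d(a_{i+1},\dots,a_n)$ must come out positive, as required by the positive definiteness of $A_{p/q}$.
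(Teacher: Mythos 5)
Your proposal is correct and takes essentially the same route as the paper: both compute the entries of $A_{p/q} = -p\,M_{p/q}^{-1}$ from the cofactors of the tridiagonal matrix via the adjugate formula together with $\det(M_{p/q}) = (-1)^n p$. The only difference is that the paper asserts the minor factorization as ``straightforward (though extremely tedious) to check,'' whereas your block-lower-triangular decomposition plus iterated Laplace expansion (each peeled row contributing $+1$), with the signs $(-1)^{i-1}$ and $(-1)^{n-j}$ extracted from negative definiteness of the principal submatrices, actually carries out that verification cleanly and with the correct total sign $(-1)^{(i+j+n+1)+(i-1)+(n-j)}=+1$.
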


\begin{proof}
For $i\leq j$, it is straightforward (though extremely tedious) to check that the cofactor of $M_{p/q}$ corresponding to position $(i,j)$ is
\[ C_{ij} = C_{ji} = (-1)^{i-1}d(a_1,\dots,a_{i-1}) \cdot (-1)^{n-j}d(a_{j+1},\dots,a_n). \]
(In other words, $C_{ij}$ is the product of the determinants of the tridiagonal matrices with $a_1,\dots,a_{i-1}$ and $a_{j+1},\dots,a_n$ on their diagonals and entries above and below the diagonal equal to $1$.)  The corresponding entry of $M^{-1}_{p/q}$ is therefore
\[ c_{ij} = (-1)^{i+j} \cdot \frac{(-1)^{i-1+n-j}|C_{ij}|}{\det(M_{p/q})} = \frac{(-1)^{n+1}|C_{ij}|}{(-1)^n p} = -\frac{d(a_1,\dots,a_{i-1})d(a_{j+1},\dots,a_n)}{p}, \]
so the entries of $A_{p/q} = -pM^{-1}_{p/q}$ are exactly as claimed.
\end{proof}

In particular, all of the entries $c'_{ij}$ of $A_{p/q}$ are positive integers, and so the corresponding value of $\rv^T A_{p/q} \rv$ is
\[ \sum_{i,j=1}^n c'_{ij}r_i r_j \leq \sum_{i,j=1}^n c'_{ij} |r_i r_j| \leq \sum_{i,j=1}^n c'_{ij}(|a_i|-2)(|a_j|-2) \]
with equality if and only if $r_i r_j = (|a_i|-2)(|a_j|-2)$ for all $i,j$.  We conclude that $\rv^T A_{p/q} \rv$ is maximized (and hence $d_3(\xi_{\rv})$ is minimized, as seen in equation \eqref{eq:d3-formula-apq}) exactly when all products $r_i r_j$ are nonnegative and as large as possible, i.e.\ when
\[ \rv = \langle r_1, r_2, \dots, r_n\rangle  = \pm \langle |a_1|-2, |a_2|-2, \dots, |a_n|-2\rangle. \]
These correspond to the canonical tight contact structure of Remark~\ref{rem:xi-can} and its conjugate, so we have shown the following.

\begin{proposition}
\label{prop:d3-xican-minimal}
If $\xi$ is a tight contact structure on $L(p,q)$, then $d_3(\xi) \geq d_3(\xican)$ with equality if and only if $\xi$ is either $\xican$ or its conjugate.
\end{proposition}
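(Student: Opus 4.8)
The plan is to reduce the statement to a maximization of the quadratic form associated with $A_{p/q}$ over the finite set of admissible rotation vectors. By equation~\eqref{eq:d3-formula-apq}, for $\xi = \xi_{\rv}$ we have
\[ d_3(\xi_{\rv}) = \frac{-\frac{1}{p}(\rv^T A_{p/q}\rv) + n - 2}{4}, \]
so, since $p > 0$, the quantity $d_3(\xi_{\rv})$ is an affine, strictly decreasing function of $Q(\rv) := \rv^T A_{p/q}\rv$. Hence minimizing $d_3$ over the tight contact structures on $L(p,q)$ is equivalent to maximizing $Q(\rv)$ over all admissible $\rv = \langle r_1,\dots,r_n\rangle$, where by Theorem~\ref{thm:lens-space-classification} admissibility means $r_i \in \{a_i+2, a_i+4, \dots, |a_i|-2\}$, and in particular $|r_i| \le |a_i|-2$ for each $i$.

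The key input is Proposition~\ref{prop:a-pq-entries}, which tells us that every entry $c'_{ij}$ of $A_{p/q}$ is a positive integer. Writing $Q(\rv) = \sum_{i,j} c'_{ij} r_i r_j$, positivity of the coefficients gives the two-step bound
\[ Q(\rv) = \sum_{i,j} c'_{ij} r_i r_j \;\le\; \sum_{i,j} c'_{ij} |r_i||r_j| \;\le\; \sum_{i,j} c'_{ij}(|a_i|-2)(|a_j|-2), \]
the first inequality because $r_i r_j \le |r_i||r_j|$ and the second because $|r_i| \le |a_i|-2$. The right-hand side is exactly $Q(\rv_0)$ for $\rv_0 = \langle |a_1|-2,\dots,|a_n|-2\rangle$, so the upper bound is attained. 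This shows $d_3(\xi) \ge d_3(\xi_{\rv_0})$, and by Remark~\ref{rem:xi-can} we have $\xi_{\rv_0} = \xican$.

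For the equality clause I would analyze when both inequalities are sharp. Since each $c'_{ij} > 0$, equality in the first step forces $r_i r_j \ge 0$ for all $i,j$, i.e.\ all nonzero $r_i$ share a common sign; equality in the second step, taking $i = j$, forces $r_i^2 = (|a_i|-2)^2$, hence $|r_i| = |a_i|-2$ for every $i$. Together these say precisely that $\rv = \pm\langle |a_1|-2,\dots,|a_n|-2\rangle$, which by Remarks~\ref{rem:xi-conjugate} and~\ref{rem:xi-can} correspond to $\xican$ and its conjugate.

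The only place real work is hidden is the strict positivity of all entries of $A_{p/q}$: positive definiteness alone would guarantee merely positive diagonal entries, whereas the argument crucially needs the off-diagonal coefficients to be positive so that the termwise estimate $r_i r_j \le |r_i||r_j|$ can be summed against them. That positivity is exactly the content of Proposition~\ref{prop:a-pq-entries}, established earlier via the explicit cofactor computation, and with it in hand the remainder is a routine quadratic-form estimate.
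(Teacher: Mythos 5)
Your proposal is correct and takes essentially the same route as the paper: both arguments use equation~\eqref{eq:d3-formula-apq} to translate minimizing $d_3$ into maximizing $\rv^T A_{p/q}\rv$, invoke the positivity of all entries of $A_{p/q}$ from Proposition~\ref{prop:a-pq-entries} to justify the termwise two-step bound, and extract the equality case in the same way. Your explicit analysis of when each inequality is sharp (sign coherence from the first step, $|r_i|=|a_i|-2$ from the diagonal terms in the second) is just a slightly more detailed write-up of the paper's one-line equality criterion.
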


\begin{remark}\label{rmk:large-d3-not-reducible}
Proposition~\ref{prop:reducible-d3-value} says that any tight contact structure $\xi$ on $L(p,q)$ arising from a reducible Legendrian surgery has $d_3(\xi) = -\frac{r^2+p}{4p} \leq -\frac{1}{4}$.  It thus follows from Proposition~\ref{prop:d3-xican-minimal} that if $d_3(\xican) > -\frac{1}{4}$, then $L(p,q)$ cannot be a summand of a reducible Legendrian surgery.
\end{remark}

\subsection{A recurrence relation for the minimal $d_3$ invariant}
\label{ssec:recurrence}

Let $-\frac{p}{q} = [a_1,\dots,a_n]$ with $a_i\leq -2$ for all $i$, and define $f(p/q) = \rv^T A_{p/q} \rv$, where $\rv = \langle |a_1|-2,\dots,|a_n|-2 \rangle$ and $A_{p/q}$ is the matrix defined in the previous subsection whose entries are described by Proposition~\ref{prop:a-pq-entries}.  We note that $f(p/q)$ is integer-valued and that
\begin{equation}
\label{eq:d3-formula-fpq}
d_3(\xican) = \frac{-\frac{1}{p}f(\frac{p}{q}) + n-2}{4}
\end{equation}
according to equation \eqref{eq:d3-formula-apq}.  In this subsection we will determine a recurrence relation for $f(p/q)$ (Proposition~\ref{prop:f-recurrence-eqn}).  We begin with two examples.  

\begin{example}
\label{ex:f-n-1}
If $n=1$ then $\frac{p}{q}$ is an integer and $a_1 = -p$.  We compute $A_{p/1} = \left(1\right)$, and so $f(p/1) = (p-2)^2$.  Since $p\geq 2$, we note for later that $\frac{1}{p}f(\frac{p}{1}) \leq p-2$ with equality if and only if $p=2$.
\end{example}

\begin{example}
\label{ex:f-n-2}
If $n=2$ then $-\frac{p}{q} = a_1 - \frac{1}{a_2} = \frac{a_1a_2-1}{a_2}$, so $p=a_1a_2-1$ and $q=|a_2|$.  We compute
\begin{align*}
f(p/q) &=
\left(\begin{array}{cc} |a_1|-2 & |a_2|-2 \end{array}\right)
\left(\begin{array}{cc} |a_2| & 1 \\ 1 & |a_1| \end{array}\right)
\left(\begin{array}{c} |a_1|-2 \\ |a_2|-2 \end{array}\right) \\
&= |a_2|(|a_1|-2)^2 + 2(|a_1|-2)(|a_2|-2) + |a_1|(|a_2|-2)^2
\end{align*}
and so multiplying by $|a_2|$ produces
\[ |a_2|f(p/q) = \left[ |a_2|(|a_1|-2) + (|a_2|-2) \right]^2 + (|a_1||a_2|-1)(|a_2|-2)^2, \]
or $f(p/q) = \frac{1}{q}\left((p-q-1)^2 + p(q-2)^2\right)$.  Equivalently, $\frac{1}{p}f(\frac{p}{q}) = \frac{(p-q-1)^2}{pq} + \frac{1}{q}f(\frac{q}{1})$.
\end{example}

Now suppose that $n\geq 3$, i.e.\ that the continued fraction of $-\frac{p}{q}$ has length at least $3$.  Then if we let $r_i = |a_i|-2$ for all $i$, we have by Proposition~\ref{prop:a-pq-entries} that $f(p/q)$ is equal to
\begin{equation}\label{eq:f-matrix-formula} 
\left(\begin{array}{c} r_1 \\ r_2 \\ \vdots \\ r_n \end{array}\right)^T
\left(\begin{array}{cccc}
d()d(a_2,\dots,a_n) & d()d(a_3,\dots,a_n) & \dots & d()d() \\
d()d(a_3,\dots,a_n) & d(a_1)d(a_3,\dots,a_n) & \dots & d(a_1)d() \\
\vdots & \vdots & \ddots & \vdots \\
d()d() & d(a_1)d() & \dots & d(a_1,\dots,a_{n-1})d()
\end{array}\right)
\left(\begin{array}{c} r_1 \\ r_2 \\ \vdots \\ r_n \end{array}\right)
\end{equation}
where we recall that $d()=1$ by definition.  When we expand this product into $n^2$ terms, we can separate them into two groups: the $2n-1$ terms coming from the top row and left column of $A_{p/q}$, whose sum we denote by $S_1$, and the $(n-1)^2$ terms from the bottom right $(n-1)\times (n-1)$ submatrix, which sum to $S_2$.  The terms in the first group sum to
\begin{equation}\label{eq:S1-def}
 S_1 = r_1 \left(r_1 d(a_2,\dots,a_n) + 2\sum_{i=2}^n r_i d(a_{i+1},\dots,a_n)\right), 
\end{equation}
which we can simplify using the following lemma.

\begin{lemma}
\label{lem:d-recursion}
For $k \geq 2$ and integers $b_1,\dots,b_k \leq -2$ we have 
\[ d(b_1,\dots,b_k) = |b_1|d(b_2,\dots,b_k) - d(b_3,\dots,b_k). \]
\end{lemma}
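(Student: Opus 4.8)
The plan is to prove the lemma by a Laplace (cofactor) expansion of the determinant, combined with a short sign-bookkeeping argument based on the negative-definiteness of these tridiagonal matrices. Throughout, recall that $d(b_1,\dots,b_k) = |\det(M)|$, where $M$ is the $k\times k$ symmetric tridiagonal matrix with $b_1,\dots,b_k$ down the diagonal, $1$'s on the off-diagonals, and all $b_i \leq -2$.

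First I would set up the signed recurrence. Write $\Delta_k = \det(M)$, and let $\Delta_{k-1}$ and $\Delta_{k-2}$ denote the determinants of the tridiagonal matrices with diagonals $b_2,\dots,b_k$ and $b_3,\dots,b_k$ respectively. Expanding $\det(M)$ along the first row gives $\Delta_k = b_1 \Delta_{k-1} - \det(M_{12})$, where $M_{12}$ is the minor obtained by deleting row $1$ and column $2$. The first column of $M_{12}$ has only one nonzero entry, namely the $(2,1)$-entry of $M$, which equals $1$; expanding $\det(M_{12})$ along that column therefore yields exactly $\Delta_{k-2}$. This produces the unsigned three-term recurrence $\Delta_k = b_1\Delta_{k-1} - \Delta_{k-2}$.

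Next I would convert this into the stated identity by tracking signs. As used already in Section~\ref{subsec:tight-lens} to conclude that $M_{p/q}$ is negative definite, any such tridiagonal matrix with all diagonal entries $\leq -2$ is negative definite (this also follows directly by induction from the recurrence), so its $j\times j$ determinant has sign $(-1)^j$. Hence $d(b_1,\dots,b_k) = (-1)^k\Delta_k$, $d(b_2,\dots,b_k) = (-1)^{k-1}\Delta_{k-1}$, and $d(b_3,\dots,b_k) = (-1)^{k-2}\Delta_{k-2}$. Substituting the recurrence and using that $b_1 < 0$, so that $(-1)^k b_1 \Delta_{k-1} = -b_1 \cdot (-1)^{k-1}\Delta_{k-1} = |b_1|\,d(b_2,\dots,b_k)$, while $(-1)^k\Delta_{k-2} = (-1)^{k-2}\Delta_{k-2} = d(b_3,\dots,b_k)$, gives precisely $d(b_1,\dots,b_k) = |b_1|\,d(b_2,\dots,b_k) - d(b_3,\dots,b_k)$. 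The base case $k=2$ is consistent with the convention $d()=1$, since $d(b_1,b_2) = |b_1 b_2 - 1| = |b_1||b_2| - 1$ because $b_1 b_2 \geq 4$.

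The computation is entirely routine; the only point requiring care is the sign analysis, namely ensuring that the $(-1)^j$ factors combine correctly when passing from the signed determinants $\Delta_j$ to the positive quantities $d(\cdot)$, and that the empty-sequence convention $d()=1$ makes the $k=2$ base case come out right. I do not anticipate any genuine obstacle.
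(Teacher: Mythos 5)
Your proof is correct, but it takes a genuinely different route from the paper's. You derive the signed three-term recurrence $\Delta_k = b_1\Delta_{k-1} - \Delta_{k-2}$ by Laplace expansion along the first row (your computation of the minor $M_{12}$, collapsing to $\Delta_{k-2}$ via its single nonzero first-column entry, is right), and then pass to the unsigned quantities $d(\cdot)$ using the sign alternation $\Delta_j = (-1)^j|\Delta_j|$ coming from negative definiteness. The paper instead gets the lemma in two lines from machinery already in place: writing $-\frac{p_i}{q_i} = [b_i,\dots,b_k]$ and using the identification $p_i = d(b_i,\dots,b_k)$ (established in Section~\ref{ssec:minimal-d3} from the fact that the linking matrix is negative definite and presents $H_1(L(p,q)) \cong \ZZ/p\ZZ$), the numerator recursion \eqref{eq:numerator-recursion} gives $p_1 = |b_1|p_2 - q_2$ together with $q_1=p_2$ and $q_2 = p_3$, which is exactly the statement. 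Your argument is self-contained linear algebra and does not depend on the continued-fraction/determinant dictionary, so it would stand even without the topological input; the paper's proof is shorter given its prior setup and keeps the arithmetic inside the continued-fraction formalism used throughout Section 4. One small caution: your parenthetical claim that the sign alternation ``also follows directly by induction from the recurrence'' is underspecified as stated, since in the recurrence the terms $b_1\Delta_{k-1}$ and $-\Delta_{k-2}$ carry opposite signs, so positivity of the previous two unsigned determinants alone does not close the induction; one needs a strengthened hypothesis such as monotonicity, $(-1)^j\Delta_j \geq (-1)^{j-1}\Delta_{j-1}$, which does go through using $|b_j|\geq 2$. Since your primary justification---negative definiteness of tridiagonal matrices with diagonal entries $\leq -2$, which the paper itself invokes for $M_{p/q}$---is sound, this is a presentational point rather than a gap.
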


\begin{proof}
Write $-\frac{p_i}{q_i} = [b_i,\dots,b_k]$ for $i=1,2,3$, so that $p_i = d(b_i,\dots,b_k)$.  Then we know from \eqref{eq:numerator-recursion} that
\[ -\frac{p_1}{q_1} = -\frac{|b_1|p_2 - q_2}{p_2} \]
so that $p_1 = |b_1|p_2 - q_2$ and $q_1 = p_2$, and similarly $q_2 = p_3$.  We conclude that $p_1 = |b_1|p_2-p_3$, as desired.
\end{proof}

Let $p_i = d(a_i,\dots,a_n)$ for $1 \leq i \leq n+1$, with $p_{n+1}=1$ and $p_n = |a_n|$, and recall that $n\geq 3$.  By Lemma~\ref{lem:d-recursion},  since $r_i = |a_i|-2$,  we have $r_{i-1}p_{i} = p_{i-1} - 2p_{i} + p_{i+1}$ for $2 \leq i \leq n-1$ and $r_{n}d() = r_n = |a_n|-2$.  Applying this to \eqref{eq:S1-def}, we obtain a telescoping sum
\begin{align*}
\frac{S_1}{r_1} &= (p_1-2p_2+p_3) + 2\left(\sum_{i=2}^{n-1} p_i-2p_{i+1}+p_{i+2}\right) + 2(p_n-2)(1) \\
&= (p_1 - 2p_2 + p_3) + 2(p_2 - p_3 - 1) \\
&= p_1 - p_3 - 2.
\end{align*}
In particular 
\begin{equation}\label{eq:S1}
S_1 = (|a_1|-2)(p_1-p_3-2) 
\end{equation}
for $n \geq 3$.

To evaluate the sum $S_2$ of the terms coming from the bottom right $(n-1) \times (n-1)$ submatrix of $A_{p/q}$ in \eqref{eq:f-matrix-formula}, we use Lemma~\ref{lem:d-recursion} to replace each factor of the form $d(a_1,\dots,a_{i-1})$ with $i\geq 3$.  Each term in the bottom right submatrix of $A_{p/q}$ can be rewritten as
\[ d(a_1,\dots,a_{i-1})d(a_{j+1},\dots,a_n) = \left(|a_1|d(a_2,\dots,a_{i-1}) - d(a_3,\dots,a_{i-1})\right)d(a_{j+1},\dots,a_n) \]
if $i \geq 3$, and otherwise if $i=2$ then 
\[ d(a_1,\dots,a_{i-1})d(a_{j+1},\dots,a_n) = |a_1|d()d(a_{j+1},\dots,a_n). \]
The contribution to $S_2$ from the $(i,j)$th or $(j,i)$th entry of $A_{p/q}$ for $2 \leq i \leq j \leq n$ is therefore one of
\begin{alignat*}{3}
&|a_1|\cdot \left(r_ir_j d(a_2,\dots,a_{i-1})d(a_{j+1},\dots,a_n)\right) &&- \left(r_ir_j d(a_3,\dots,a_{i-1})d(a_{j+1},\dots,a_n)\right), &\ \ \ &i\geq 4;\\
&|a_1|\cdot \left(r_ir_j d(a_2,\dots,a_{i-1})d(a_{j+1},\dots,a_n)\right) &&- \left(r_ir_j d()d(a_{j+1},\dots,a_n)\right), &\ \ \ &i=3;\\
&|a_1|\cdot \left(r_ir_j d()d(a_{j+1},\dots,a_n)\right), && &\ \ \ &i=2.
\end{alignat*}
Let $-\frac{p_2}{q_2} = [a_2,\dots,a_n]$ and $-\frac{p_3}{q_3} = [a_3,\dots,a_n]$.  Summing the above expressions over all $2 \leq i,j \leq n$, the terms with a factor of $|a_1|$ are precisely those appearing in $|a_1|f(p_2/q_2)$ while those without are precisely the terms in $f(p_3/q_3)$, so we conclude that
\begin{equation}\label{eq:S2}
S_2 = |a_1|f(p_2/q_2) - f(p_3/q_3).
\end{equation}

\begin{lemma}
\label{lem:f-difference-eqn}
Let $n\geq 3$, and write $-\frac{p_i}{q_i}=[a_i,\dots,a_n]$ for $1 \leq i \leq n$ for some sequence $\{a_i\}$.  Then
\[ q_1 f(p_1/q_1) - (p_1-q_1-1)^2 - p_1 f(p_2/q_2) = q_2f(p_2/q_2) - (p_2-q_2-1)^2 - p_2 f(p_3/q_3). \]
\end{lemma}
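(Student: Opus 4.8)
The plan is to read the claimed equation as the assertion that the quantity
\[ g(p_i/q_i) := q_i\, f(p_i/q_i) - (p_i - q_i - 1)^2 - p_i\, f(p_{i+1}/q_{i+1}) \]
takes the same value for $i=1$ and $i=2$, and to verify this by substituting the decomposition of $f(p_1/q_1)$ already obtained in this subsection. Since the hypothesis is $n\geq 3$, equations \eqref{eq:S1} and \eqref{eq:S2} apply and give $f(p_1/q_1) = S_1 + S_2 = (|a_1|-2)(p_1-p_3-2) + |a_1|\,f(p_2/q_2) - f(p_3/q_3)$, where $p_i = d(a_i,\dots,a_n)$ and the tails $[a_2,\dots,a_n]$, $[a_3,\dots,a_n]$ are genuine continued fractions.

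First I would record the continued-fraction relations needed. From the numerator recursion \eqref{eq:numerator-recursion} together with Lemma~\ref{lem:d-recursion} one reads off $q_1 = p_2$, $q_2 = p_3$, and $p_1 = |a_1|p_2 - p_3$; write $\alpha = |a_1|$ for brevity. Substituting the expression for $f(p_1/q_1)$ into the left-hand side of the lemma and using $q_1 = p_2$ and $p_1 = \alpha p_2 - p_3$, the term $\alpha p_2 f(p_2/q_2)$ coming from $q_1 f(p_1/q_1)$ cancels against the $-p_1 f(p_2/q_2)$ term, and the remaining $f(p_2/q_2)$ and $f(p_3/q_3)$ contributions match those on the right-hand side exactly. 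After this cancellation the identity collapses to the purely numerical claim
\[ p_2(\alpha-2)(p_1-p_3-2) = (p_1-p_2-1)^2 - (p_2-p_3-1)^2. \]

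The key step is to verify this last identity, which I would do by factoring the right-hand side as a difference of squares,
\[ (p_1-p_2-1)^2 - (p_2-p_3-1)^2 = (p_1-p_3-2)(p_1-2p_2+p_3), \]
and then observing that the defining relation $p_1 = \alpha p_2 - p_3$ is precisely the statement $p_1 - 2p_2 + p_3 = (\alpha-2)p_2$; this makes the right-hand side equal to $p_2(\alpha-2)(p_1-p_3-2)$, matching the left. I expect the only real obstacle to be bookkeeping—keeping the indices $p_i,q_i$ aligned and remembering that the $S_1,S_2$ decomposition is exactly what requires $n\geq 3$—rather than anything conceptual, since once the $f$-terms cancel the remaining identity is an immediate consequence of the difference-of-squares factorization and the recursion for $p_1$.
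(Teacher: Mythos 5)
Your proposal is correct and follows essentially the same route as the paper: both start from the decomposition $f(p_1/q_1)=S_1+S_2$ of equations \eqref{eq:S1} and \eqref{eq:S2}, use the relations $q_1=p_2$, $q_2=p_3$, $p_1=|a_1|p_2-p_3$ from \eqref{eq:numerator-recursion}, and reduce the claim to the difference-of-squares identity $(p_1-2p_2+p_3)(p_1-p_3-2)=(p_1-p_2-1)^2-(p_2-p_3-1)^2$. The only cosmetic difference is order of operations (the paper multiplies through by $p_2$ and rearranges, while you substitute into the target identity and isolate the numerical claim), and your phrase ``cancels against'' should more precisely say that $|a_1|p_2\,f(p_2/q_2)-p_1f(p_2/q_2)$ leaves the residual term $p_3\,f(p_2/q_2)=q_2\,f(p_2/q_2)$ appearing on the right-hand side, which is what your next clause in fact asserts.
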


\begin{proof}
By \eqref{eq:S1} and \eqref{eq:S2}, we have 
\[ f(p_1/q_1) = S_1 + S_2 =  (|a_1|-2)(p_1-p_3-2) + |a_1|f(p_2/q_2) - f(p_3/q_3). \]
Now we use the identity $p_1 = |a_1|p_2 - p_3$ of \eqref{eq:numerator-recursion}, or equivalently $|a_1| = \frac{p_1+p_3}{p_2}$, and multiply both sides by $p_2$ to get
\[ p_2 f(p_1/q_1) = (p_1-2p_2+p_3)(p_1-p_3-2) + (p_1+p_3)f(p_2/q_2) - p_2f(p_3/q_3). \]
The first term on the right side equals the difference of squares $(p_1-p_2-1)^2 - (p_2-p_3-1)^2$, so after replacing it with this difference and using the identities $q_1 = p_2$ and $q_2 = p_3$, we have
\[ q_1 f(p_1/q_1) = (p_1-q_1-1)^2 - (p_2-q_2-1)^2 + (p_1+q_2)f(p_2/q_2) - p_2 f(p_3/q_3). \]
Rearranging both sides completes the proof.
\end{proof}

With this, we are ready to establish the desired recurrence relation for $f(p/q)$.  
\begin{proposition}
\label{prop:f-recurrence-eqn}
We have $f(p/q)=(p-2)^2$ if $\frac{p}{q}$ is an integer, i.e.\ if $q=1$, and otherwise
\[ \frac{1}{p}f\left(\frac{p}{q}\right) = \frac{(p-q-1)^2}{pq} + \frac{1}{q}f\left(\frac{q}{\lceil\frac{p}{q}\rceil q-p}\right). \]
\end{proposition}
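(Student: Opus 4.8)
The plan is to prove the statement by induction on the length $n$ of the continued fraction expansion $-\frac{p}{q}=[a_1,\dots,a_n]$, with Lemma~\ref{lem:f-difference-eqn} serving as the inductive engine. The first task is to rewrite the proposed recurrence in the notation of that lemma. Setting $-\frac{p_i}{q_i}=[a_i,\dots,a_n]$, so that $(p_1,q_1)=(p,q)$, I would identify the argument $\frac{q}{\lceil p/q\rceil q-p}$ on the right-hand side with the tail $\frac{p_2}{q_2}$. By \eqref{eq:numerator-recursion} we have $q_1=p_2$ and $p_1=|a_1|p_2-p_3$, i.e.\ $p=|a_1|q-r$ where $-\frac{q}{r}=[a_2,\dots,a_n]$; since $1\le r<q$ and, for $n\ge 2$, $\frac{p}{q}$ is not an integer, this forces $|a_1|=\lceil p/q\rceil$ and hence $r=\lceil p/q\rceil q-p=q_2$. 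Thus $f\!\left(\frac{q}{\lceil p/q\rceil q-p}\right)=f(p_2/q_2)$, and after clearing denominators the claimed recurrence is exactly the assertion that
\[ g(p/q):=q_1 f(p_1/q_1)-(p_1-q_1-1)^2-p_1 f(p_2/q_2) \]
vanishes.

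For the base cases, when $q=1$ (equivalently $n=1$) the value $f(p/1)=(p-2)^2$ is computed directly in Example~\ref{ex:f-n-1}, which is the first formula. When $n=2$, Example~\ref{ex:f-n-2} computes $f(p/q)$ explicitly and records the identity $\frac{1}{p}f(\frac{p}{q})=\frac{(p-q-1)^2}{pq}+\frac{1}{q}f(\frac{q}{1})$, which is precisely $g(p/q)=0$; here the tail $[a_2]$ has $r=1$, consistent with the identification above.

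For the inductive step I would suppose $n\ge 3$ and assume the recurrence for all continued fractions of length $n-1$. Applying Lemma~\ref{lem:f-difference-eqn} to the sequence $\{a_i\}$ gives
\[ g(p_1/q_1)=q_2 f(p_2/q_2)-(p_2-q_2-1)^2-p_2 f(p_3/q_3)=g(p_2/q_2). \]
Since $-\frac{p_2}{q_2}=[a_2,\dots,a_n]$ has length $n-1$, the inductive hypothesis gives $g(p_2/q_2)=0$, hence $g(p_1/q_1)=0$, which is the desired recurrence. Equivalently, one may observe that Lemma~\ref{lem:f-difference-eqn}, applied to each truncation $[a_k,\dots,a_n]$ of length at least $3$, shows that $g(p_k/q_k)$ is independent of $k$, while the length-$2$ endpoint $g(p_{n-1}/q_{n-1})$ vanishes by Example~\ref{ex:f-n-2}.

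The computational heart of the argument — the telescoping and the difference-of-squares manipulation — is already packaged into Lemma~\ref{lem:f-difference-eqn} and the two examples, so the remaining work is light. The only real care needed is the bookkeeping that identifies $\frac{q}{\lceil p/q\rceil q-p}$ with $\frac{p_2}{q_2}$ and checks that the $n=2$ instance of the recurrence matches Example~\ref{ex:f-n-2}; I expect this to be the mildest obstacle, amounting simply to unwinding the continued-fraction conventions so that the two sides of the stated formula line up.
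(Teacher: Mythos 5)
Your proposal is correct and follows essentially the same route as the paper: the paper applies Lemma~\ref{lem:f-difference-eqn} iteratively $n-2$ times to reduce the quantity $\delta = q_1 f(p_1/q_1)-(p_1-q_1-1)^2-p_1f(p_2/q_2)$ to the length-$2$ case, where Example~\ref{ex:f-n-2} shows it vanishes, and your induction on $n$ (together with your explicitly noted telescoping reformulation) is just a repackaging of that same argument, including the identical identification of $\frac{q}{\lceil p/q\rceil q-p}$ with $\frac{p_2}{q_2}$ via \eqref{eq:numerator-recursion}.
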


\begin{proof}
In the case $\frac{p}{q} \in \ZZ$, this is Example~\ref{ex:f-n-1}, so assume that $\frac{p}{q} \not\in \ZZ$; then we can write $-\frac{p}{q}=[a_1,\dots,a_n]$, $n\geq 2$, and define $-\frac{p_i}{q_i} = [a_i,\dots,a_n]$ for all $i$.  (Note that $\frac{p}{q}=\frac{p_1}{q_1}$.)  Since $a_1 = \lfloor -\frac{p}{q} \rfloor$ we have $|a_1| = \lceil\frac{p}{q}\rceil$, and then $\frac{p}{q} = -(a_1 - \frac{1}{-p_2/q_2}) = \frac{|a_1|p_2-q_2}{p_2}$ implies that $\frac{p_2}{q_2} = \frac{q}{\lceil\frac{p}{q}\rceil q-p}$.

By applying Lemma~\ref{lem:f-difference-eqn} a total of $n-2$ times, we conclude that the quantity $\delta = q_1f(p_1/q_1)-(p_1-q_1-1)^2-p_1f(p_2/q_2)$ satisfies 
\[ \delta = q_{n-1}f(p_{n-1}/q_{n-1}) - (p_{n-1}-q_{n-1}-1)^2 - p_{n-1} f(p_n/q_n). \]
But we can evaluate these terms directly since $-\frac{p_{n-1}}{q_{n-1}}$ has a continued fraction of length $2$: we know from Example~\ref{ex:f-n-2} and because $-\frac{p_n}{q_n} = -\frac{q_{n-1}}{1}$ that 
\[ \frac{\delta}{p_{n-1}q_{n-1}} = \frac{1}{p_{n-1}}f\left(\frac{p_{n-1}}{q_{n-1}}\right) - \left[\frac{(p_{n-1}-q_{n-1}-1)^2}{p_{n-1}q_{n-1}} + \frac{1}{q_{n-1}}f\left(\frac{q_{n-1}}{1}\right)\right] = 0, \]
hence $\delta=0$.  Thus, $q_1f(p_1/q_1) = (p_1-q_1-1)^2 + p_1f(p_2/q_2)$, and dividing both sides by $p_1q_1$ produces the desired recurrence.  
\end{proof}

\subsection{A second proof of Theorem~\ref{thm:main}}

We recall from Proposition~\ref{prop:decomp} that any reducible surgery on $K$ of slope less than $\maxtb(K)$ has slope less than $-1$.

\begin{proposition}
\label{prop:reducible-d3-bound}
Let $K$ be a knot with $\maxtb(K) \geq 0$.  Suppose that $S^3_{-p}(K)$ is reducible for some $p \geq 2$, and write $S^3_{-p}(K) = L(p,q) \# Y$.  Then there is a tight contact structure $\xi$ on $L(p,q)$ induced by a Legendrian surgery on $K$ which satisfies
\begin{equation}\label{eqn:d3-lens-bad}
d_3(\xi) \leq -\frac{p+1}{4}. 
\end{equation}
\end{proposition}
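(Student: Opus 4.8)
The plan is to reduce the desired inequality to a statement about rotation numbers and then produce a suitable Legendrian representative by stabilization, following the same strategy as the first proof of Theorem~\ref{thm:main} but in a simpler form. Since $p \geq 2$ and $\maxtb(K) \geq 0$, we have $n = -p \leq -2 \leq \maxtb(K) - 1$, so Proposition~\ref{prop:reducible-d3-value} applies: any Legendrian representative of $K$ with $tb = 1 - p$ and rotation number $r$ induces a tight contact structure $\xi$ on $L(p,q)$ with $d_3(\xi) = -\frac{r^2 + p}{4p}$. A one-line computation shows that the target bound $d_3(\xi) \leq -\frac{p+1}{4}$ is equivalent to $r^2 \geq p^2$, i.e.\ to $|r| \geq p$. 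Thus it suffices to exhibit a single Legendrian representative of $K$ with $tb = 1 - p$ and $|r| \geq p$, and feed it into Proposition~\ref{prop:reducible-d3-value}.

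To produce such a representative, I would first stabilize a $tb$-maximizing representative down to $tb = 0$, which is possible since $0 \leq \maxtb(K)$. Because $tb + r$ is odd for any Legendrian knot in $S^3$, this representative has odd rotation number, and in particular $r \neq 0$; after reversing the orientation of $K$ if necessary we may assume its rotation number is $r_0 \geq 1$. I would then apply $p - 1$ positive stabilizations, each of which decreases $tb$ by $1$ and increases $r$ by $1$, to obtain a representative with $tb = 1 - p$ and rotation number $r = r_0 + (p - 1) \geq p$. Substituting back gives
\[ d_3(\xi) = -\frac{(r_0 + p - 1)^2 + p}{4p} \leq -\frac{p^2 + p}{4p} = -\frac{p+1}{4}, \]
as desired.

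I expect no serious obstacle here; the argument is essentially bookkeeping with $tb$, $r$, and the stabilization formulas. The one point requiring genuine care is guaranteeing that the rotation number at $tb = 0$ is nonzero, which is exactly what the parity constraint $tb + r \equiv 1 \pmod 2$ supplies—without it, one could not rule out a representative with $r_0 = 0$, and symmetric stabilization might then fail to push $|r|$ up to $p$. It is worth noting that, in contrast to the first proof of Theorem~\ref{thm:main}, we do not need to enumerate all rotation numbers attainable at $tb = 1 - p$ nor invoke the Giroux–Honda classification; we only need the single extremal representative maximizing $|r|$, so this is a strict simplification of that earlier counting argument.
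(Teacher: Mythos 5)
Your proof is correct and takes essentially the same route as the paper: the paper's proof likewise takes a representative with $tb=0$ and $r \geq 1$ (using the odd parity of $tb+r$ and an orientation reversal, exactly as you do), positively stabilizes it $p-1$ times to get $tb=1-p$ and $r \geq p$, and feeds this into Proposition~\ref{prop:reducible-d3-value} to compute $d_3(\xi) \leq -\frac{p^2+p}{4p} = -\frac{p+1}{4}$. Your preliminary reduction of the inequality to $|r| \geq p$ is merely an explicit rephrasing of that final computation, so there is no substantive difference.
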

\begin{proof}
We can take a Legendrian representative of $K$ with $tb = 0$ and ensure $r \geq 1$ by orienting $K$ appropriately.  Then if we positively stabilize this knot $p-1$ times, we will get a Legendrian representative with $tb = 1-p$ and $r \geq p$, and according to Proposition \ref{prop:reducible-d3-value} the contact structure $\xi$ on $L(p,q)$ induced by surgery on this representative satisfies
\[ d_3(\xi) = -\frac{r^2+p}{4p} \leq -\frac{p^2+p}{4p} = -\frac{p+1}{4}. \qedhere \]
\end{proof}

\begin{theorem}
\label{thm:d3-bound-lens-spaces}
Let $\xi$ be a tight contact structure on $L(p,q)$, $p>q\geq 1$.  If $-\frac{p}{q}$ has continued fraction
\[ [a_1,a_2,\dots,a_n] := a_1 - \frac{1}{a_2 - \frac{1}{\dots - \frac{1}{a_n}}}, \]
where $a_i \leq -2$ for all $i$, then $d_3(\xi) \geq \frac{-p+2n-1}{4}$.
\end{theorem}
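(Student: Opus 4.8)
The plan is to reduce the statement to the single contact structure $\xican$ and then prove the resulting scalar inequality by induction on the length $n$ of the continued fraction. By Proposition~\ref{prop:d3-xican-minimal}, every tight contact structure $\xi$ on $L(p,q)$ satisfies $d_3(\xi) \geq d_3(\xican)$, so it suffices to establish $d_3(\xican) \geq \frac{-p+2n-1}{4}$. Using the formula \eqref{eq:d3-formula-fpq}, namely $d_3(\xican) = \frac{-\frac{1}{p}f(p/q) + n - 2}{4}$, a one-line rearrangement shows this desired bound is equivalent to the clean inequality
\[ \frac{1}{p}f\!\left(\frac{p}{q}\right) \leq p - n - 1. \]
Thus the entire theorem reduces to proving this last inequality, which I would attack by induction on $n$.

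For the base case $n = 1$ we have $q = 1$, and Example~\ref{ex:f-n-1} already records that $\frac{1}{p}f(p/1) = \frac{(p-2)^2}{p} \leq p - 2 = p - n - 1$, with equality exactly when $p = 2$. For the inductive step, suppose $n \geq 2$ and write $-\frac{p}{q} = [a_1, \ldots, a_n]$. Then $-\frac{q}{q'} = [a_2, \ldots, a_n]$ has continued-fraction length $n - 1$ and satisfies $q > q' \geq 1$, where $q' = \lceil \frac{p}{q}\rceil q - p$. The recurrence of Proposition~\ref{prop:f-recurrence-eqn} gives
\[ \frac{1}{p}f\!\left(\frac{p}{q}\right) = \frac{(p-q-1)^2}{pq} + \frac{1}{q}f\!\left(\frac{q}{q'}\right), \]
and the inductive hypothesis applied to $L(q,q')$ yields $\frac{1}{q}f(q/q') \leq q - (n-1) - 1 = q - n$. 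Substituting, the target inequality $\frac{1}{p}f(p/q) \leq p - n - 1$ collapses to the purely elementary claim
\[ \frac{(p-q-1)^2}{pq} \leq p - q - 1. \]

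This final inequality is immediate: since $p > q \geq 1$ we have $p - q - 1 \geq 0$, so if it equals $0$ both sides vanish, and otherwise we cancel one factor of $p-q-1$ to leave $p - q - 1 \leq pq$, which holds because $pq - (p-q-1) = p(q-1) + (q+1) \geq 0$. This completes the induction and hence the proof.

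I expect the only delicate point to be the bookkeeping in the inductive step, namely verifying that the reduced lens space $L(q,q')$ genuinely has continued-fraction length $n-1$ with $q > q' \geq 1$, so that the inductive hypothesis legitimately applies; this follows from the numerator recursion \eqref{eq:numerator-recursion}, exactly as used in the proof of Proposition~\ref{prop:f-recurrence-eqn}. No step is computationally serious, so the main work is simply assembling the reduction to $\xican$, the translation via \eqref{eq:d3-formula-fpq}, and the recurrence into a clean induction.
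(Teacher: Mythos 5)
Your proof is correct and follows essentially the same route as the paper's: reduce to $\xican$ via Proposition~\ref{prop:d3-xican-minimal}, translate through \eqref{eq:d3-formula-fpq} into the inequality $\frac{1}{p}f(\frac{p}{q}) \leq p-n-1$, and induct on $n$ using the recurrence of Proposition~\ref{prop:f-recurrence-eqn} with base case Example~\ref{ex:f-n-1}. The paper handles the same elementary step $\frac{(p-q-1)^2}{pq} \leq p-q-1$ by noting $0 \leq \frac{p-q-1}{pq} < 1$, which is equivalent to your cancellation argument.
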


\begin{proof}
We recall from equation \eqref{eq:d3-formula-apq} that $d_3(\xican) = \frac{1}{4}\big(-\frac{1}{p}f(\frac{p}{q})+n-2\big)$, so $d_3(\xican) \geq \frac{-p+2n-1}{4}$ if and only if $\frac{1}{p}f(\frac{p}{q}) \leq p-n-1$.  Moreover, it suffices to prove this last inequality, since Proposition~\ref{prop:d3-xican-minimal} says that $d_3(\xi) \geq d_3(\xican)$ for all tight contact structures $\xi$ on $L(p,q)$.  In Example~\ref{ex:f-n-1} we saw that this is true when $n=1$, so we will assume $n\geq 2$ and proceed by induction on $n$.

The recurrence of Proposition~\ref{prop:f-recurrence-eqn} says that
\begin{equation}\label{eq:f-p-q-recurrence} \frac{1}{p}f\left(\frac{p}{q}\right) = \frac{(p-q-1)^2}{pq} + \frac{1}{q}f\left(\frac{q}{\lceil\frac{p}{q}\rceil q-p}\right), 
\end{equation}
and we know that $p \geq q+1$ and so $0 \leq \frac{p-q-1}{pq} < \frac{p}{pq} \leq 1$.  Hence
\[ \frac{(p-q-1)^2}{pq} = (p-q-1)\cdot \frac{p-q-1}{pq} \leq p-q-1. \]
Now $-\frac{q}{\lceil\frac{p}{q}\rceil q-p}$ has a continued fraction of length $n-1$, so by assumption $\frac{1}{q}f\big(\frac{q}{\lceil\frac{p}{q}\rceil q-p}\big) \leq q-n$ and it follows from \eqref{eq:f-p-q-recurrence} that $\frac{1}{p}f\left(\frac{p}{q}\right) \leq (p-q-1) + (q-n) = p - n - 1$, as desired.
\end{proof}

In particular, if $\maxtb(K) \geq 0$ and $S^3_{-p}(K)$ is reducible for some $-p \leq \maxtb(K)-1$ then the induced tight contact structure $\xi$ on the $L(p,q)$ summand must satisfy $d_3(\xi) \geq -\frac{p-1}{4}$ by Theorem~\ref{thm:d3-bound-lens-spaces}, since $n\geq 1$.  Thus we have $d_3(\xi) >  -\frac{p+1}{4}$, but this contradicts Proposition~\ref{prop:reducible-d3-bound}.  We conclude once again that if $\maxtb(K)\geq 0$, then any reducible surgery on $K$ must have slope $\maxtb(K)$ or greater, which completes the second proof of Theorem~\ref{thm:main}.  

\begin{remark}
The condition $\maxtb(K) \geq 0$ in Proposition~\ref{prop:reducible-d3-bound} can be relaxed slightly as follows: if $K$ has a Legendrian representative with $tb(K)+|r(K)| \geq 1$ (or equivalently $\geq 0$, since the left side is always odd), then the conclusion still holds for all slopes $-p \leq tb(K)-1$ even if $tb(K) < \maxtb(K)$ or $\maxtb(K) < 0$.  Thus we can actually rule out sufficiently negative reducible surgeries on any knot $K$ with $\maxsl(K) \geq 1$, making this proof of Theorem~\ref{thm:main} slightly stronger than the proof given in Section~\ref{ssec:proof-main}.
\end{remark}


\section{Reducible Legendrian surgeries for $\maxtb < 0$}\label{sec:negative-tb}

In this section we will prove Theorem~\ref{thm:tb-negative-thm}, which we restate here for convenience.

\begin{theorem}
\label{thm:large-negative-surgeries}
Suppose that $\maxtb(K) = -\tau$ for some $\tau > 0$, and define $t=\tau$ if $\tau$ is odd and $t=\tau-1$ if $\tau$ is even.  Suppose also that $-p$-surgery on $K$ is reducible for some $p>\tau$.
\begin{itemize}
\item If $p > 2t-3$, then $S^3_{-p}(K) = S^3_{-p}(U) \# Y$.
\item If $p=2t-3$ and $S^3_{-p}(K)$ does not have the form $S^3_{-p}(U) \# Y$, then we must have $(\tau,p)=(6,7)$ and $S^3_{-p}(K) \cong S^3_{-7}(T_{2,-3}) \# Y$, with trace diffeomorphic to a boundary sum $X \natural Z$, where $X$ is the trace of $-7$-surgery on $T_{2,-3}$.
\item  Moreover, whenever the reducible surgery has the form $S^3_{-p}(K) = S^3_{-p}(U) \# Y$, the trace of the surgery is diffeomorphic to $D_{-p} \natural Z$, where $D_{-p}$ is the disk bundle over $S^2$ with Euler number $-p$.
\end{itemize}
In each case, $Y$ is an irreducible homology sphere bounding the contractible Stein manifold $Z$.
\end{theorem}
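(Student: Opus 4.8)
The plan is to start from the decomposition in Proposition~\ref{prop:decomp}: since $-p<\maxtb(K)$, the reducible surgery may be written as $S^3_{-p}(K)=L(p,q)\#Y$, where $Y$ is an irreducible homology sphere bounding a contractible Stein manifold $Z$, and $L(p,q)$ carries a simply connected Stein filling $W_1$ with $b_2(W_1)=1$ and intersection form $\langle-p\rangle$. By Theorem~\ref{thm:steinboundary} the trace of the surgery, which is itself a Stein filling of $S^3_{-p}(K)$, is the boundary sum $W=W_1\natural Z$. Writing $-\frac pq=[a_1,\dots,a_n]$, everything comes down to showing that $n=1$, i.e.\ $L(p,q)=L(p,1)=S^3_{-p}(U)$, except in one sporadic situation; the claims about the trace will then follow by identifying $W_1$.

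First I would pin down the $d_3$ invariants that the surgery forces onto the lens space summand. By Proposition~\ref{prop:reducible-d3-value}, any Legendrian representative of $K$ with $tb=1-p$ and rotation number $r$ induces a tight contact structure on $L(p,q)$ with $d_3=-\frac{r^2+p}{4p}$, so \emph{every} such value must occur among the $d_3$ invariants of $\operatorname{Tight}(L(p,q))$. The admissible rotation numbers are controlled by the parity relation $tb+r\equiv1\pmod 2$: at the top of the mountain range $tb=-\tau$ forces $r_0\equiv\tau+1\pmod2$, and stabilizing $p-\tau-1$ times I can always realize a representative with $|r|$ as large as $p-t-1$ (this is exactly where $t$, the largest odd integer $\le\tau$, enters, absorbing the case distinction on the parity of $\tau$). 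Hence the most negative forced value is at most $-\frac{(p-t-1)^2+p}{4p}$, and comparing it with the minimum of the spectrum using Proposition~\ref{prop:d3-xican-minimal} gives the necessary inequality $d_3(\xican)\le-\frac{(p-t-1)^2+p}{4p}$, equivalently $f(p/q)\ge(p-t-1)^2+(n-1)p$ via \eqref{eq:d3-formula-fpq}.

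Next I would analyze when this necessary condition can persist for $n\ge2$, using the recurrence of Proposition~\ref{prop:f-recurrence-eqn} (and the explicit entries of $A_{p/q}$ from Proposition~\ref{prop:a-pq-entries}) together with the upper bound $\frac1p f(p/q)\le p-n-1$ from the proof of Theorem~\ref{thm:d3-bound-lens-spaces} and the count in Proposition~\ref{prop:count-xi}. The hypothesis $p>2t-3$ is designed to make $n\ge2$ incompatible with $f(p/q)\ge(p-t-1)^2+(n-1)p$ in all but boundary configurations, which occur exactly at $p=2t-3$. The step I expect to be the main obstacle is that this extremal comparison is sharp but not strict—for lens spaces such as $L(9,2)$ it holds with equality—so the single most negative forced value does not by itself give a contradiction. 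What rescues the argument is that the \emph{entire} set $\{-\frac{r^2+p}{4p}\}$ of forced values must lie inside the $d_3$-spectrum of $L(p,q)$; these values are precisely the spectrum of $L(p,1)$, and for $q\ne1$ some non-extremal forced value lands strictly between two consecutive values of $\operatorname{Tight}(L(p,q))$, contradicting the fact, from Proposition~\ref{prop:reducible-d3-value}, that each such value is realized by a tight contact structure on $L(p,q)$. Making this mismatch precise through the recurrence is the crux, and the only configuration in which the exact values and the counts remain simultaneously compatible is $(\tau,p)=(6,7)$ with $L(7,4)=S^3_{-7}(T_{2,-3})$, whose $\xican$ has $d_3=-\frac27$, matching the forced value at $r=\pm1$.

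Finally I would identify $W_1$ in order to read off the diffeomorphism type of the trace. By Theorem~\ref{thm:steinfill}, $W_1$ is obtained from $B^4$ by attaching a single $2$-handle (the $1$-handles cancel since $W_1$ is simply connected), so it is the trace of $-p$-surgery on a knot $\kappa\subset S^3$ with $S^3_{-p}(\kappa)=L(p,q)$. Because $b_2^+(W_1)=0$ by Theorem~\ref{thm:lisca-fillings}, the classification of fillings of lens spaces then identifies $W_1$ with the disk bundle $D_{-p}$ over $S^2$ when $L(p,q)=L(p,1)$, and with the trace $X$ of $-7$-surgery on $T_{2,-3}$ in the sporadic case. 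Since $W=W_1\natural Z$, the trace is $D_{-p}\natural Z$ or $X\natural Z$ accordingly, and the remaining assertions—that $Y$ is an irreducible homology sphere bounding the contractible Stein manifold $Z$—are exactly the output of Proposition~\ref{prop:decomp}.
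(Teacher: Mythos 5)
Your framework --- Proposition~\ref{prop:decomp} and Theorem~\ref{thm:steinboundary} to split the trace, Proposition~\ref{prop:reducible-d3-value} to force the values $-\frac{r^2+p}{4p}$ into the $d_3$-spectrum of $L(p,q)$, and stabilization to reach $|r|=p-t-1$ --- matches the paper's, but the mechanism you propose for what you correctly identify as the crux provably cannot close the argument. You hope that for $q\neq 1$ some forced value misses the spectrum of $\operatorname{Tight}(L(p,q))$, with $(\tau,p)=(6,7)$ as the lone survivor. But Example~\ref{ex:L-4k+3-4} shows that on $L(4k+3,4)$ \emph{every} odd $r$ with $|r|\leq\frac{p-5}{2}$ is a rotation number, so when $p=2t-3$ the entire forced set of $d_3$ values lies in the spectrum for every odd $t\geq 5$, not just for $\tau\in\{5,6\}$: no $d_3$ comparison can distinguish $(\tau,p)=(6,7)$ from $(\tau,p)=(8,11)$ or $(10,15)$. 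The paper eliminates these cases by an entirely different ingredient, symplectic homology: Lemma~\ref{lem:fillings-L-p-4} (Lisca's classification of fillings plus Bhupal--Ono) identifies the $L(p,4)$-filling summand $W$ with the trace of Legendrian surgery on $T_{2,-\frac{p-1}{2}}$, whose DGA carries an ungraded augmentation coming from a normal ruling, so $\sh(W)\cong\lhho(T)\neq 0$ and hence $\sh(X)\cong\sh(W)\times\sh(Z)\neq 0$; yet for $\tau\neq 6$ the inducing representative with $(tb,r)=(1-p,\frac{p-5}{2})$ is a stabilization, forcing $\sh(X)=0$. Nothing in your proposal substitutes for this step. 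The same deficiency recurs in the $L(p,2)$ and $L(8,3)$ subcases: there too the forced $d_3$ values are compatible with the spectrum (for $L(7,2)$ the values at $r=\pm 1$ are realized), and the paper rules these spaces out via classifications of Stein fillings (Lisca, Kaloti), using that no filling has intersection form $\langle -p\rangle$ as required by Proposition~\ref{prop:decomp} --- see Proposition~\ref{prop:possible-tb-Lp2} and the end of the proof of Proposition~\ref{prop:p-q-3-n-2} --- not via $d_3$ at all.

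Even in the regime $p>2t-3$ your quantitative plan is too coarse. Your necessary condition is $f(p/q)\geq (p-t-1)^2+(n-1)p$, and the only upper bound you invoke is $\frac{1}{p}f(p/q)\leq p-n-1$ from the proof of Theorem~\ref{thm:d3-bound-lens-spaces}; these two are compatible exactly when $(t+1)^2\leq 2p(t+1-n)$, which for $n=2$ holds for all $p>2t-3$ once $t\geq 3$, so no contradiction results and $n\geq 2$ is not excluded by this route. The paper instead needs sharp, case-by-case bounds on the rotation numbers: the exact $n=2$ formula of Example~\ref{ex:f-n-2} with a minimization over $q$ and a number-theoretic analysis at $q=3$, giving $|r|\leq\frac{p-5}{2}$ (Proposition~\ref{prop:p-q-3-n-2}); the estimate $|r|\leq\frac{p-6}{2}$ when $a_i\leq -3$ and $n\geq 3$ (Corollary~\ref{cor:p-bound-ai-large}); and a count of tight structures when some $a_i=-2$ (Proposition~\ref{prop:p-bound-ai-2}). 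Finally, your identification of $W_1$ rests on the assertion that the $1$-handles ``cancel'' so that $W_1$ is the trace of surgery on some knot $\kappa$; simple connectivity does not justify this, and it is also unnecessary: the paper identifies $W_1$ with $D_{-p}$ directly from the McDuff--Hind--Plamenevskaya--Van Horn-Morris classification of fillings of $L(p,1)$, with the intersection form $\langle-p\rangle$ excluding blow-ups and the $L(4,1)$ rational homology ball.
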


\begin{proof}
Since we assume $p > \tau$, we can apply Proposition~\ref{prop:decomp} to the Legendrian reducible surgery on $K$.  We note that $L(p,1) = S^3_{-p}(U)$, and so we begin by ruling out all $L(p,q)$ as possible summands where $q > 1$, when $p > 2t - 3$.  Note that since we require $q < p$, the condition $q > 1$ is equivalent to the continued fraction $-\frac{p}{q} = [a_1,\dots,a_n]$ having length $n \geq 2$.  

First, suppose there is an $i$ such that $a_i = -2$.  If $n \geq 3$ then we apply Proposition~\ref{prop:p-bound-ai-2} below to see that $p \leq 2t-4$.  If instead we have $n=2$, then Proposition~\ref{prop:possible-tb-Lp2} says that the lens space summand is $L(7,2) \cong L(7,4)$ and $\tau$ is either $5$ or $6$, hence $p=7=2t-3$.  We return to this case shortly.  

In the remaining case we have $a_i \leq -3$ for all $i$, and this will require a closer examination of the $d_3$ invariants of tight contact structures on $L(p,q)$, but we will eventually prove in Proposition~\ref{prop:p-q-3-n-2} and Corollary~\ref{cor:p-bound-ai-large} (corresponding to $n=2$ and $n \geq 3$ respectively) that $p \leq 2t-3$ as desired, with equality only if $n=2$ and $L(p,q) \cong L(p,4)$.  If equality occurs, then we must also have $p\equiv 3 \pmod{4}$ since $t$ is odd and $p=2t-3$.  Thus if $q>1$ we conclude that $p \leq 2t-3$, with equality only if $L(p,q) \cong L(4k+3,4)$ for some $k \geq 1$.

Now suppose that $p=2t-3$ and $L(p,q) \cong L(p,4)$.  In order to achieve $p = 2t - 3$, by Propositions~\ref{prop:possible-tb-Lp2} and \ref{prop:p-q-3-n-2}, there must be a Legendrian representative of $K$ with $(tb,r)=(1-p, \frac{p-5}{2})$ which induces the contact structure $\xican$ or its conjugate on $L(p,4)$ as explained in Example~\ref{ex:L-4k+3-4} below; and if $\tau \neq 6$ then this representative is stabilized.  Performing Legendrian surgery on this representative, the trace is a Stein manifold $(X,J) \cong (W,J_W) \natural (Z,J_Z)$, where $(W,J_W)$ is a Stein filling of $(L(p,4),\xican)$ up to conjugation, by Proposition~\ref{prop:decomp}.  Lemma~\ref{lem:fillings-L-p-4} then asserts that $W$ is symplectic deformation equivalent to the trace of Legendrian surgery on the $(2,-\frac{p-1}{2})$-torus knot with the same values of $tb$ and $r$.  Thus the symplectic homology $\sh(W)$ is equal to that of the trace of this Legendrian surgery, and we will use this fact to get a contradiction for most values of $\tau$.\footnote{We make use of several results from \cite{bee}, which does not claim to provide complete proofs in full generality.  However, since we are not interested in the actual value of $\sh(X)$ but only whether it vanishes or not, the available details will suffice for our purposes.}

Let $(\mathcal{A}_p,\partial_p)$ denote the Legendrian contact homology DGA over $\Q$ of the $(2,-\frac{p-1}{2})$-torus knot $T$ with $(tb,r) = (1-p,\frac{p-5}{2})$.  It is easy to see that a front diagram of this knot admits an ungraded normal ruling and hence an ungraded augmentation \cite{fuchs,leverson}, which is an ungraded DGA morphism $f:(\mathcal{A}_p,\partial_p) \to (\Q, 0)$; this implies that $1 \not\in \operatorname{im}(\partial_p)$, and so the homology group $\lhho(T)$ of \cite[Section 4.5]{bee} is nonzero.  Then Bourgeois--Ekholm--Eliashberg \cite[Corollary 5.7]{bee} proved that $\sh(W) \cong \lhho(T)$, which is nonzero, and we have an isomorphism of rings $\sh(X) \cong \sh(W) \times \sh(Z) \neq 0$ as well by \cite[Theorem 1.11]{cieliebak} and \cite[Theorem 2.20]{mclean}.  But if $\tau \neq 6$, then as mentioned above, $X$ is the trace of surgery on a stabilized Legendrian knot, hence $\sh(X)=0$ by \cite[Section~7.2]{bee}.  We conclude that if $p=2t-3$ then we must have $(\tau,p)=(6,7)$, and $W$ is the trace of $-7$-surgery on the left handed trefoil.

Finally, assuming that $S^3_{-p}(K) = L(p,1) \# Y$, we let $(W,J_W)$ denote the induced Stein filling of $L(p,1)$.  The symplectic fillings of $L(p,1)$ have been completely classified, by McDuff \cite{mcduff} and Hind \cite{hind} in the universally tight case and by Plamenevskaya--van Horn-Morris \cite{plamenevskaya-vhm} in the virtually overtwisted case: up to blowing up, the fillings are all deformation equivalent to either the fillings described by Theorem~\ref{thm:lens-space-classification} (i.e., attaching a Weinstein $2$-handle to $B^4$ along a topological unknot with $tb=1-p$, the result of which is diffeomorphic to $D_{-p}$), or a rational homology ball bounded by $L(4,1)$.  Note that $W$ has intersection form $\langle -p \rangle$ and so it cannot be diffeomorphic to a blow-up, since an exceptional sphere would have self-intersection $-1$, or to a rational homology ball.  Thus $(W,J_W)$ must be deformation equivalent to $D_{-p}$ with a corresponding Stein structure coming from Theorem~\ref{thm:lens-space-classification}.  
\end{proof}

The rest of the current section is devoted to establishing the results claimed in the proof of Theorem~\ref{thm:large-negative-surgeries}.  We will study Stein fillings of $(L(p,4),\xican)$ momentarily, and then in the following subsections we will show that the various lens spaces $L(p,q)$, $q>1$, cannot be summands of reducible surgeries on $K$ whose slopes are sufficiently negative with respect to $\maxtb(K)$.  We divide this into two cases, each of which requires a different strategy, based on the continued fraction $-\frac{p}{q} = [a_1,\dots,a_n]$: in Section~\ref{ssec:lens-ai-2} we study the lens spaces for which $\max_i a_i = -2$, and in Section~\ref{ssec:lens-ai-3} we deal with the lens spaces such that $a_i \leq -3$ for all $i$.
 
\begin{lemma}
\label{lem:fillings-L-p-4}
If $p=4k+3$ for some $k \geq 1$, then any Stein filling $W$ of $(L(p,4), \xican)$ with $b_2(W)=1$ and intersection form $\langle -p\rangle$ is symplectic deformation equivalent to the trace of Legendrian surgery on the Legendrian $(2,-\frac{p-1}{2})$-torus knot with $tb=1-p$ and $r=\frac{p-5}{2}$.
\end{lemma}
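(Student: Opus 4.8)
The plan is to realize $W$ as the trace of a single Weinstein $2$-handle attached along a Legendrian representative of the torus knot $T_{2,-(2k+1)}$, and then to use the rigidity of Legendrian torus knots to pin down the attaching data. First I would record the relevant combinatorics. Since $-\frac{p}{4} = [-(k+1),-4]$ has length $n=2$ (note $k\geq 1$ is exactly what makes $a_1=-(k+1)\leq -2$), Theorem~\ref{thm:lens-space-classification} presents $\xican$ as the tight structure with maximal rotation numbers $r_1=k-1$, $r_2=2$. Because the intersection form $\langle -p\rangle$ represents no class of square $-1$, the filling $W$ is minimal.

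Next I would identify the smooth type of $W$. By Moser's computation of lens space surgeries on torus knots, $S^3_{-p}(T_{2,-(2k+1)})$ is the lens space $L(p,(2k+1)^2)$, and since $4(k+1)\equiv 1 \pmod p$ we have $(2k+1)^2 \equiv k+1$ and $L(p,k+1)\cong L(p,4)$; this generalizes the identity $S^3_{-7}(T_{2,-3})\cong L(7,4)$ of the preceding remark. Thus the trace $X_0$ of this surgery is a minimal Stein filling of $(L(p,4),\xican)$ with $b_2=1$ and intersection form $\langle -p\rangle$. Invoking Lisca's classification of the symplectic fillings of $(L(p,q),\xican)$, I would check that the filling with these invariants is unique up to orientation-preserving diffeomorphism, so that $W$ is diffeomorphic to $X_0$.

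To upgrade this to a symplectic deformation equivalence I would show that $W$ admits a Stein handle decomposition consisting of a single $2$-handle attached to $B^4$ along a Legendrian knot $\Lambda$; since the smooth trace is $X_0$, the knot $\Lambda$ is smoothly isotopic to $T_{2,-(2k+1)}$ and is attached with framing $tb(\Lambda)-1=-p$, so $tb(\Lambda)=1-p$. The rotation number is then forced: the computation of Proposition~\ref{prop:reducible-d3-value} gives $d_3(\xican) = -\frac{r(\Lambda)^2+p}{4p}$, and comparing with the value $d_3(\xican) = \frac{1}{4}\big(-\frac{1}{p}f(\frac{p}{4})\big)$ obtained from Example~\ref{ex:f-n-2} yields $r(\Lambda)^2 = \big(\frac{p-5}{2}\big)^2$, i.e.\ $r(\Lambda) = \pm\frac{p-5}{2}$ (the two signs corresponding to $\xican$ and its conjugate). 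Finally, Etnyre--Honda's classification of Legendrian torus knots shows that a Legendrian $T_{2,-(2k+1)}$ is determined up to Legendrian isotopy by the pair $(tb,r)$, so the attaching data of $\Lambda$, and hence the Stein structure on $W$ up to deformation, is uniquely determined. This exhibits $W$ as symplectic deformation equivalent to the trace of Legendrian surgery on the $(2,-\frac{p-1}{2})$-torus knot with $tb=1-p$ and $r=\frac{p-5}{2}$.

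The main obstacle is precisely the passage from diffeomorphism to symplectic deformation equivalence, which is what the symplectic homology argument in the proof of Theorem~\ref{thm:large-negative-surgeries} genuinely requires: one must reduce an arbitrary Stein handle decomposition of $W$ to a single $2$-handle without leaving the Weinstein category (cancelling any $1$--$2$ handle pairs Stein-compatibly), and track orientations so that the conjugation ambiguity $r\mapsto -r$ is harmless. Once $W$ is presented by a single Weinstein $2$-handle along $\Lambda$, the Etnyre--Honda rigidity finishes the argument; establishing that reduction, presumably by leaning on the explicit Legendrian surgery diagrams underlying Lisca's fillings, is the technical heart of the proof.
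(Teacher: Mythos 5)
There is a genuine gap, and you have in fact located it yourself: the passage from diffeomorphism to symplectic deformation equivalence is left entirely unproven in your proposal, and the route you sketch for it would not work as stated. You propose to show that $W$ admits a Stein handle decomposition consisting of a single Weinstein $2$-handle attached along a Legendrian $\Lambda$, by ``cancelling any $1$--$2$ handle pairs Stein-compatibly.'' But there is no general mechanism that lets you realize a given smooth handle decomposition of a Stein domain by a Weinstein handle decomposition; flexibility results of that kind fail badly in dimension four, and the uniqueness of Weinstein presentations is precisely the sort of statement that is open. (Even granting a single-handle presentation, your appeal to Etnyre--Honda only pins down $\Lambda$ up to Legendrian isotopy once you already know it is a one-handle presentation, so the argument is circular at its technical heart.) The paper avoids this issue completely by citing Bhupal--Ono: diffeomorphic fillings of the \emph{canonical} contact structure on a lens space are automatically deformation equivalent. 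This reduces the lemma to a purely smooth uniqueness statement, which is the actual content of the paper's proof.

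For that smooth uniqueness you also substitute an assertion for an argument: you say you ``would check'' that Lisca's classification yields a unique filling with $b_2=1$ and intersection form $\langle -p\rangle$, but this is where the paper does real work. Using the continued fraction $\frac{p}{p-4}=[2,2,\dots,2,3,2,2]$ of length $m=\frac{p+5}{4}$, it analyzes which sequences $[n_1,\dots,n_m]$ arise from the blow-up procedure subject to $\sum n_i = 2m-1$ (the condition equivalent to $b_2(W)=1$), showing that the ``long'' blow-up move must be used exactly once and enumerating the resulting sequences, e.g.\ $W_{7,4}(2,1,2)$ and $W_{15,4}(1,2,3,1,2)\cong W_{15,4}(2,1,3,2,1)$, to conclude uniqueness. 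Your identification of the model filling via Moser's theorem and the minimality observation (no class of square $-1$) are correct and agree with the paper, and your $d_3$ computation correctly recovers $r=\pm\frac{p-5}{2}$ with the sign ambiguity absorbed by conjugation; but without Bhupal--Ono (or a worked-out replacement) and without the combinatorial check in Lisca's classification, the proposal does not establish the lemma.
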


\begin{proof}
Bhupal and Ono \cite{bhupal-ono} proved that diffeomorphic fillings of the canonical contact structure on a lens space must be deformation equivalent, so it suffices to prove that $W$ is unique up to diffeomorphism.  We use Lisca's classification of fillings of $(L(p,4), \xican)$ up to diffeomorphism and blow-up \cite{lisca-fillings_announce, lisca-fillings_proof}; note that once again $W$ cannot be diffeomorphic to a blow-up.  As explained in Example~\ref{ex:L-4k+3-4} below, the trace of Legendrian surgery on a representative of $T_{2,-\frac{p-1}{2}}$ with $(tb,r)=(1-p, \frac{p-5}{2})$ gives a Stein filling of $(L(p,4),\xican)$ or its conjugate, so we need to check that $W$ is diffeomorphic to this trace.

In Lisca's notation, we have a continued fraction $\frac{p}{p-4} = [2,2,\dots,2,3,2,2]$ of length $m=\frac{p+5}{4}$, so the minimal symplectic fillings of $L(p,4)$ have the form $W_{p,4}(n_1,\dots,n_m)$ where $n_i \leq 2$ for $i \neq m-2$, $n_{m-2} \leq 3$, and the sequence $[n_1,\dots,n_m]$ is obtained from $[0]$ by a ``blow-up'' procedure.  Each blow-up sending $[n_1,\dots,n_l]$ to $[1,n_1+1,\dots,n_l]$ or $[n_1,\dots,n_l+1,1]$ increases $\sum n_i$ by 2 while the blow-up $[n_1,\dots,n_i+1,1,n_{i+1}+1,\dots,n_l]$ increases it by 3; since we must blow up $m-1$ times to get a sequence of length $m$, and the condition $b_2=1$ is equivalent to $\sum n_i = 2m-1$, we must perform the latter operation exactly once.  In addition, we must ensure that at each step at most one entry $n_i$ exceeds 2, in which case it equals 3.  Applying the former operation some number of times produces $[1,1]$ or $[1,2,2,\dots,2,1]$; then the latter can produce either $[2,1,2]$ or $[1,\dots,3,1,2]$ or its reverse, in which every omitted entry is a 2; and then repeating the former operation a nonnegative number of times leaves us with a sequence of the same form.  Thus $W$ must be diffeomorphic to either
\begin{align*}
W_{7,4}(2,1,2), & & W_{15,4}(1,2,3,1,2) \cong W_{15,4}(2,1,3,2,1) &,
\end{align*}
or $W_{p,4}(1,\dots,3,1,2)$ for $p \neq 7,15$, and so it is uniquely determined by $p$.
\end{proof}

We will use the following terminology throughout this section.

\begin{definition}
The integer $r$ is a \emph{rotation number for $L(p,q)$} if there is a tight contact structure $\xi$ on $L(p,q)$ such that $d_3(\xi) = -\frac{r^2+p}{4p}$.
\end{definition}

The reason for the terminology is that a reducible Legendrian surgery on a Legendrian representative of $K$ with $L(p,q)$ as a summand, where $(tb(K), r(K)) = (1-p, r)$, induces a tight contact structure $\xi$ on $L(p,q)$ with $d_3(\xi) = -\frac{r^2+p}{4p}$ by Proposition \ref{prop:reducible-d3-value}, hence $r$ is a rotation number for $L(p,q)$.  We will thus study the set of rotation numbers for each $L(p,q)$, $q > 1$, in order to produce bounds on $p$ in terms of $\maxtb(K)$.

\begin{example}
\label{ex:L-4k+3-4}
If $p=4k+3$ then $L(p,4)$ is the result of $-p$-surgery on the torus knot $T_{2,-(2k+1)}$, i.e.\ the $(2,-\frac{p-1}{2})$ cable of the unknot \cite{moser}.  This can be realized as a Legendrian surgery since $\maxtb(T_{2,-(2k+1)}) = -4k-2 = 1-p$, and in fact every odd number from $-(2k-1)$ to $2k-1 = \frac{p-5}{2}$ is the rotation number of some $tb$-maximizing representative of $T_{2,-(2k+1)}$ \cite{etnyre-honda-knots}.  We conclude that if $p \equiv 3\pmod{4}$, then every odd number $r$ with $|r| \leq \frac{p-5}{2}$ is a rotation number for $L(p,4)$.  Moreover, since $-\frac{p}{4} = [-(k+1),-4]$ we can use Example~\ref{ex:f-n-2} and \eqref{eq:d3-formula-fpq} to compute $d_3(\xican) = -\frac{1}{4p}\left((\frac{p-5}{2})^2 + p\right)$, so the rotation number $r=\frac{p-5}{2}$ comes from $\xican$.  Further, by Proposition~\ref{prop:d3-xican-minimal}, $\xican$ and its conjugate are the only tight contact structures on $L(p,4)$ which can produce this rotation number.  
\end{example}

\subsection{Lens spaces with $a_i = -2$ for some $i$}
\label{ssec:lens-ai-2}

In this section we prove part of Theorem \ref{thm:large-negative-surgeries}, namely that $L(p,q)$ summands cannot occur under the hypotheses of the theorem if some entry in the continued fraction for $-\frac{p}{q}$ is $-2$.  This will follow in most cases by counting the possible tight contact structures on $L(p,q)$ coming from a reducible surgery on the knot $K$.

\begin{proposition}
\label{prop:p-bound-ai-2}
Let $K$ be a knot with $\maxtb(K) = -\tau$, $\tau \geq 1$, and suppose that $S^3_{-p}(K) = L(p,q) \# Y$ for some $p>\tau$ and $q>1$.  If $-\frac{p}{q} = [a_1,\dots,a_n]$ with $n \geq 3$, and $a_i=-2$ for some $i$, then 
\[ p \leq 2t - n \]
where $t=\tau$ if $\tau$ is odd and $t=\tau-1$ if $\tau$ is even, and this inequality is strict for $n=3$.
\end{proposition}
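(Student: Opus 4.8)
The plan is to exploit the fact that reducibility of $S^3_{-p}(K)=L(p,q)\#Y$ is a purely topological phenomenon, independent of any contact structure. Consequently \emph{every} Legendrian representative of $K$ with $tb=1-p$ produces, via Proposition~\ref{prop:reducible-d3-value} together with Proposition~\ref{prop:decomp}, a tight contact structure on $L(p,q)$ whose $d_3$ invariant equals $-\frac{r^2+p}{4p}$, where $r$ is the rotation number of that representative. In the language of the Definition above, every rotation number that $K$ realizes at $tb=1-p$ is a rotation number for $L(p,q)$. I would first manufacture many of these. Starting from a representative with $tb=\maxtb(K)=-\tau$ and rotation number $r_0$, set $\rho=|r_0|$ and stabilize $k:=p-\tau-1\ge 0$ times, in all combinations of signs and using orientation reversal; this realizes every integer of the correct parity in $\{-\rho-k,\dots,\rho+k\}$. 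When $\rho\le k$ this yields $D=\lfloor\tfrac{\rho+k}{2}\rfloor+1$ distinct values of $|r|$ (the case $\rho>k$ gives $k+1$ distinct values and is only easier), hence $D$ distinct rotation numbers for $L(p,q)$, each giving a distinct value $d_3=-\frac{r^2+p}{4p}\le-\frac14$.

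The next step is to turn this into a lower bound on $|\operatorname{Tight}(L(p,q))|$. Since distinct $|r|$ give distinct $d_3$ values, and since by Remark~\ref{rem:xi-conjugate} each structure is accompanied by a conjugate with the same $d_3$ that is distinct from it — the unique self-conjugate structure has $d_3=\frac{n-2}{4}>0$ when $n\ge 3$, so it never occurs among structures with $d_3<0$ — these $D$ values account for at least $2D$ distinct tight contact structures. On the other hand, because some $a_i=-2$ we have $m=\min_i|a_i|=2$, so Proposition~\ref{prop:count-xi} bounds the total number of tight structures by $\frac{p-n+1}{2}$, with \emph{strict} inequality once $n\ge 3$ and $p\ne q+1$. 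The excluded case $p=q+1$ has $-\frac{p}{q}=[-2,\dots,-2]$ and hence only the single structure $\xican$, with $d_3=\frac{n-2}{4}>-\frac14$; by Remark~\ref{rmk:large-d3-not-reducible} such an $L(p,q)$ cannot be a summand at all, so it can be discarded. Combining $2D\le|\operatorname{Tight}(L(p,q))|\le\frac{p-n+1}{2}$ and solving for $p$ then produces the bound.

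The main work, and the main obstacle, is the parity and integrality bookkeeping that sharpens the raw estimate $2D\le\frac{p-n+1}{2}$ into exactly $p\le 2t-n$. This is precisely where the distinction between $t$ and $\tau$ enters: because $tb+r$ is odd we have $\rho\equiv\tau+1\pmod 2$, so the smallest admissible $\rho$ is $0$ when $\tau$ is odd and $1$ when $\tau$ is even, and these minimal values (which give the weakest bound on $p$, hence the binding constraint) are exactly what force the constant $2t$ rather than $2\tau$. I would run the comparison $2D\le|\operatorname{Tight}(L(p,q))|$ through the cases dictated by the parities of $p$ and of $n$, invoking the integrality of the count and the strictness of Proposition~\ref{prop:count-xi} for $n\ge 3$; the extra strictness available at $n=3$ is what yields the strict conclusion $p<2t-3$ there. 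I expect the tightest case to be $\tau$ even with $n$ odd, where the naive inequality falls one short; to close it I would bring in the extremal constraint $(\rho+k)^2\le f(p/q)-p(n-1)$ coming from $d_3(\xi)\ge d_3(\xican)$ (Proposition~\ref{prop:d3-xican-minimal}), which forces the largest realized $|r|$ to be a genuine rotation number and eliminates the remaining borderline values of $p$.
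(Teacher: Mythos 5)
Your derivation of the non-strict bound $p \le 2t-n$ is correct and is essentially the paper's own argument: stabilizing a $tb$-maximizing representative (using orientation reversal and the parity constraint $tb+r$ odd) produces at least $\lceil \frac{p-t}{2}\rceil$ values of $r^2$ at $tb=1-p$, hence that many distinct $d_3$ values, all $\le -\frac14$; each is realized by a genuine conjugate pair because a self-conjugate structure has $d_3 = \frac{n-2}{4} \ge 0$; and the $m=2$ case of Proposition~\ref{prop:count-xi}, with strictness for $n\ge 3$ and $p \ne q+1$ plus integrality of the count, caps $|\operatorname{Tight}(L(p,q))|$ at $\frac{p-n}{2}$, the case $p=q+1$ being discarded via Remark~\ref{rmk:large-d3-not-reducible} exactly as you do. Your $(\rho,k)$ bookkeeping, including the case $\rho > k$, reproduces the paper's count, so up to this point the two proofs coincide.

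The strictness claim for $n=3$ is where your proposal has a genuine gap. First, there is no ``extra strictness available at $n=3$'' in Proposition~\ref{prop:count-xi}: its equality condition ($n\le 2$ or $p=q+1$) is the same for every $n\ge 3$, and that one unit of strictness has already been spent in obtaining the cap $\frac{p-n}{2}$; nor is the borderline a parity phenomenon in $\tau$ and $n$, since the chain closes to $p\le 2t-n$ uniformly. What is actually required---and what the paper does---is an \emph{equality analysis of the count}: if $p=2t-3$ then every inequality in the chain is tight, so $L(p,q)$ has exactly $\frac{p-3}{2}$ tight structures; writing the count as $(|a_1|-1)\cdot|\operatorname{Tight}(L(q,r))|$ via Theorem~\ref{thm:lens-space-classification} and applying the length-$2$ case of Proposition~\ref{prop:count-xi} forces $(q-r,a_1)\in\{(1,-3),(2,-2)\}$, i.e.\ $-\frac{p}{q}$ is $[-3,-2,-2]$ or $[-2,-2,-3]$, so $L(p,q)\cong L(7,3)$, which is then killed by $d_3(\xican)=\frac17>-\frac14$ and Remark~\ref{rmk:large-d3-not-reducible}. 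Your proposed substitute, the extremal constraint $r^2 \le f(p/q)-(n-1)p$, cannot close the borderline on its own: for $-\frac{p}{q}=[-a,-2,-2]$ one computes $f(p/q)=3(a-2)^2=\frac{(p-4)^2}{3}$, and since $\frac{(p-4)^2}{3}-2p-\bigl(\frac{p-5}{2}\bigr)^2 = \frac{p^2-26p-11}{12}>0$ for $p\ge 27$, the constraint happily permits the forced rotation number $|r| \ge p-t-1=\frac{p-5}{2}$ throughout this family; these lens spaces are excluded only because they have merely $a-1=\frac{p+1}{3}<\frac{p-3}{2}$ tight structures---which is precisely the equality characterization your sketch omits. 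Until the borderline lens space is pinned down to $L(7,3)$, the strict inequality for $n=3$ remains unproved.
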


\begin{proof}
Choose a Legendrian representative of $K$ with $(tb,r)=(-\tau,r_0)$ and $r_0 \geq 0$.  By stabilizing $p-\tau-1$ times with different choices of sign, we get Legendrian representatives with $tb=1-p$ and
at least $p-\tau$ different values of $r$.  If $\tau$ is even, then we can take $r_0 \geq 1$ since $tb+r$ is always odd, and then we can also negatively stabilize a representative with $(tb,r)=(-\tau,-r_0)$ a total of $p-\tau-1$ times to get a $(p - \tau + 1)$th value of $r$.  Therefore, independent of the parity of $\tau$, there are at least $p-t$ different values of $r$ when $tb=1-p$.  By Proposition \ref{prop:reducible-d3-value} each value of $-\frac{1}{4p}(r^2+p)$ must be $d_3(\xi)$ for some tight contact structure on $L(p,q)$, so by counting values of $r^2$ we see that the set
\[ \left\{ d_3(\xi) \mid \xi \in \operatorname{Tight}(L(p,q)) \right\} \]
has at least $\left\lceil \frac{p-t}{2} \right\rceil$ elements.  Moreover, none of these is the $d_3$ invariant of a self-conjugate contact structure $\xi_0$ (if one even exists), since we would have $d_3(\xi_0) = \frac{n-2}{4} \geq 0$ by \eqref{eq:d_3-lens-gompf}, contradicting Remark~\ref{rmk:large-d3-not-reducible}.

Since each value of $d_3(\xi)$ above is obtained by at least one conjugate pair of tight contact structures on $L(p,q)$, the number of tight contact structures on $L(p,q)$ is at least 
\[ 2\left\lceil\frac{p-t}{2}\right\rceil \geq p-t. \]
But by the case $m=2$ of Proposition~\ref{prop:count-xi}, $L(p,q)$ has at most $\frac{p-n+1}{2}$ tight contact structures with equality if and only if $n=2$ or $p=q+1$; if $p=q+1$ then $-\frac{p}{q} = [-2,-2,\dots,-2]$ and the unique tight contact structure on $L(p,q)$ is self-conjugate, so this does not occur.  Thus the number of tight contact structures on $L(p,q)$ is actually at most $\frac{p-n}{2}$, which implies
\[ p - t \leq \frac{p-n}{2} \]
or equivalently $p \leq 2t-n$.

If $n=3$ and we have the equality $p=2t-n$, then the number of tight contact structures on $L(p,q)$ must have been exactly $\frac{p-n}{2} = \frac{p-3}{2}$.  Writing $-\frac{p}{q} = [a_1,a_2,a_3]$, we can assume, after possibly replacing $[a_1,a_2,a_3]$ with $[a_3,a_2,a_1]$ as in the proof of Proposition~\ref{prop:count-xi}, that either $a_2=-2$ or $a_3=-2$, hence if $-\frac{q}{r} = [a_2,a_3]$ then Proposition~\ref{prop:count-xi} says that $L(q,r)$ has exactly $\frac{q-1}{2}$ tight contact structures.  Using Theorem~\ref{thm:lens-space-classification}, the number of tight contact structures on $L(p,q)$ is therefore
\[ (|a_1|-1)\left(\frac{q-1}{2}\right) = \frac{|a_1|q-q - (|a_1|-1)}{2} = \frac{p - (q-r) - (|a_1|-1)}{2}, \]
where the second equality follows from \eqref{eq:numerator-recursion}.
This number equals $\frac{p-3}{2}$ only if $(q-r,a_1)$ is either $(1,-3)$ or $(2,-2)$, hence $-\frac{p}{q}$ is $[-3,-2,-2]$ or $[-2,-2,-3]$.  Thus if $p=2t-3$ then $L(p,q) \cong L(7,3) \cong L(7,5)$, but then $d_3(\xican) = \frac{1}{7} > -\frac{1}{4}$, so by Remark~\ref{rmk:large-d3-not-reducible}, $L(7,3)$ cannot be a summand and we must have $p \leq 2t-4$ as claimed.
\end{proof}

Suppose that instead of being in the setting of Proposition~\ref{prop:p-bound-ai-2}, we have $n=2$, and $a_i = -2$ for some $i$.  Then $-\frac{p}{q}$ is either $[-(k+1),-2] = -\frac{2k+1}{2}$ or $[-2,-(k+1)] = -\frac{2k+1}{k+1}$ for some $k \geq 1$.  Hence we have $L(p,q) = L(2k+1,2) \cong L(2k+1,k+1)$.  In particular, $p$ must be odd when $n=2$ and $L(p,q) \cong L(p,2)$.  In this case, we have very strong restrictions both on $p$ and on $K$.  

\begin{proposition}
\label{prop:possible-tb-Lp2}
If $L(p,2)$ is a summand of a reducible Legendrian surgery on some knot $K$, then 
$p=7$, $\maxtb(K)$ is either $-5$ or $-6$, and $K$ has a Legendrian representative with $(tb,r)=(-6,1)$.  If $\maxtb(K)=-5$ then we can take this representative to be a stabilization.
\end{proposition}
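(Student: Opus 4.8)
The plan is to read off the possible rotation numbers from the $d_3$-invariant and then use the classification of symplectic fillings of lens spaces to pin down $p$. Write $p=2k+1$, so that $-\frac{p}{2}=[-(k+1),-2]$ has length $2$ and, by Theorem~\ref{thm:lens-space-classification}, $L(p,2)$ carries exactly $k$ tight contact structures $\xi_{\langle r_1,0\rangle}$ with $r_1\in\{-(k-1),-(k-3),\dots,k-1\}$. Using Example~\ref{ex:f-n-2} and \eqref{eq:d3-formula-apq} with $A_{p/2}=\left(\begin{smallmatrix}2&1\\1&k+1\end{smallmatrix}\right)$ I get $d_3(\xi_{\langle r_1,0\rangle})=-\frac{r_1^2}{2p}$. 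By Proposition~\ref{prop:reducible-d3-value}, a representative of $K$ with $(tb,r)=(1-p,r)$ whose surgery has $L(p,2)$ as a summand induces a tight structure with $d_3=-\frac{r^2+p}{4p}$, and matching forces $r^2+p=2r_1^2$; so the rotation numbers realized by $L(p,2)$ are precisely the odd integers $r$ with $\tfrac{r^2+p}{2}$ a perfect square at most $(k-1)^2$. Since $S^3_{-p}(K)$ is a fixed reducible $3$-manifold, \emph{every} Legendrian representative at framing $1-p$ yields a Stein filling of it, so by Proposition~\ref{prop:decomp} every rotation number occurring at $tb=1-p$ must be realized in this sense; this will be the key input for the final step.

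The heart of the argument, and the step I expect to be the main obstacle, is to show that a summand $L(p,2)$ can occur only for $p=7$. Proposition~\ref{prop:decomp} gives a Stein filling $W$ of $(L(p,2),\xi)$ with $\pi_1(W)=0$, $b_2(W)=1$, and intersection form $\langle -p\rangle$, for whatever tight $\xi$ is induced. I would run exactly the analysis of Lemma~\ref{lem:fillings-L-p-4}, now with the continued fraction $\frac{p}{p-2}=[\underbrace{2,\dots,2}_{k-1},3]$ of length $m=k$: by Lisca \cite{lisca-fillings_announce,lisca-fillings_proof}, together with Plamenevskaya--van Horn-Morris \cite{plamenevskaya-vhm} to cover the virtually overtwisted structures, the minimal fillings are the $W_{p,2}(n_1,\dots,n_k)$ with $n_i\leq 2$ for $i<k$ and $n_k\leq 3$, and (as in the Lemma) $b_2=1$ forces $\sum n_i=2k-1$, i.e.\ one insertion-type blow-up and $k-2$ simple ones. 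Tracking the blow-up recursion from $[0]$, the delicate point is that for $k\geq 4$ every admissible sequence of this shape places an entry equal to $3$ in some position $i<k$, violating $n_i\leq 2$, so no such filling exists; for $k=3$ the admissible sequence $(2,1,2)$ survives, giving $p=7$ with $L(7,2)\cong L(7,4)$ filled by the trace of $-7$-surgery on $T_{2,-3}$ as in Example~\ref{ex:L-4k+3-4}. Thus $p=7$ is exactly the value for which $L(p,2)\cong L(p,4)$, which is where the torus-knot filling becomes available. (As a consistency check, Remark~\ref{rmk:large-d3-not-reducible} applied to $\xican$, whose $d_3$ is minimal by Proposition~\ref{prop:d3-xican-minimal}, already rules out $p<7$.)

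It remains to determine $\maxtb(K)$ once $p=7$. Here $k=3$, and the realized rotation numbers computed above are exactly $\pm1$ (arising from $r_1=\pm2$, i.e.\ from $\xican$ and its conjugate), since $r_1=0$ gives $r^2=-7<0$. I would finish as in the proof of Theorem~\ref{thm:main}: surgery occurs at $tb=-6=1-p$ with $r=\pm1$, so after reversing orientation $K$ has a representative with $(tb,r)=(-6,1)$. Stabilizing a $tb$-maximizing representative down to framing $-6$ produces $\maxtb+7$ consecutive (step-$2$) rotation numbers there, and by the previous paragraph all of them lie in $\{-1,1\}$; since $\{-1,1\}$ is the longest admissible run, this forces $\maxtb+7\leq 2$, i.e.\ $\maxtb\leq -5$, and together with $\maxtb\geq -6$ we get $\maxtb\in\{-5,-6\}$.

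Finally, in the case $\maxtb=-5$ the $tb$-maximizing rotation number $r_0$ satisfies $r_0\pm1\in\{-1,1\}$, hence $r_0=0$; so a $(-5,0)$ representative stabilizes to a $(-6,1)$ representative, and the representative in the statement may be taken to be a stabilization. The only genuinely hard ingredient is the blow-up bookkeeping of the second paragraph, establishing that the single ``$3$'' in the continued fraction $[2,\dots,2,3]$ is forced out of the last slot once $k\geq 4$; everything else is the $d_3$-bookkeeping of the first paragraph and the elementary squeeze argument (comparing $b^2=\tfrac{(r+2)^2+p}{2}$ with $(a+1)^2$ and $(a+2)^2$) showing that two realized rotation numbers can differ by $2$ only in the symmetric pair $\{-1,1\}$.
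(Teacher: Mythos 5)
Your first and third paragraphs reproduce the paper's argument essentially verbatim: the computation $d_3(\xi_{\langle s,0\rangle})=-\frac{2s^2}{4p}$, the matching condition $r^2+p=2s^2$ from Proposition~\ref{prop:reducible-d3-value}, the squeeze showing that the realized rotation numbers for $L(7,2)$ are exactly $\pm 1$, and the count of stabilized rotation numbers at $tb=-6$ forcing $\maxtb(K)\in\{-5,-6\}$ (with $r_0=0$ when $\maxtb=-5$) are all as in the paper. Your blow-up bookkeeping in the second paragraph is also correct as a statement about Lisca's admissible sequences: with $\frac{p}{p-2}=[2,\dots,2,3]$, the single interior blow-up forced by $\sum n_i=2k-1$ either yields $[2,1,2]$ (so $k=3$, $p=7$) or plants a $3$ in a non-final slot once $k\geq 4$, since every sequence of end blow-ups ends in $1$ and appending always resets the last entry to $1$.

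The genuine gap is in what that bookkeeping applies to. Lisca's classification \cite{lisca-fillings_announce, lisca-fillings_proof} covers fillings of the \emph{canonical} (universally tight) structure $\xican$ only, and your substitute for the virtually overtwisted structures fails: Plamenevskaya--van Horn-Morris \cite{plamenevskaya-vhm} classify fillings of virtually overtwisted structures on $L(p,1)$, not $L(p,2)$ (the paper cites them only for the $L(p,1)$ summand in Theorem~\ref{thm:large-negative-surgeries}). This matters because your own $d_3$-matching shows virtually overtwisted candidates genuinely occur: on $L(17,2)$ the structure $\xi_{\langle \pm 3,0\rangle}$ satisfies $2\cdot 3^2 = 1^2+17$, so it matches with $r=\pm 1$ while being virtually overtwisted ($|s|=3 < k-1 = 7$); worse, on $L(23,2)$ \emph{only} virtually overtwisted structures ($s=\pm 4,\pm 6$) match, so your Lisca-only analysis says nothing at all about that case. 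The paper closes exactly this hole with Kaloti's theorem \cite[Theorem~1.10]{kaloti}: for $p\geq 15$, \emph{every} tight contact structure on $L(p,2)$ has a unique Stein filling up to symplectomorphism, which (being the trace of surgery on the Hopf link) has $b_2=2$, contradicting the $\langle -p\rangle$ intersection form from Proposition~\ref{prop:decomp}. Only the remaining case $p=9$ is handled via Lisca, and there the matching $r^2+9=2s^2$ first forces $s=\pm 3$, i.e.\ $\xi = \xican$ or its conjugate, so Lisca's classification is legitimately applicable. (A small further omission: you should also note, as the paper does, that $W$ cannot be a blow-up of a minimal filling, since the intersection form $\langle -p\rangle$ admits no class of square $-1$.)
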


\begin{proof}
Since $p$ must be odd, we write $p=2k+1$, $k\geq 1$, and $-\frac{p}{2} = [-k-1,-2]$.  By Theorem~\ref{thm:lens-space-classification}, the induced tight contact structure $\xi$ on $L(p,2)$ is the result of Legendrian surgery on a Hopf link whose components have $tb$ equal to $-k$ and $-1$ and rotation numbers $s\in \{-k+1,-k+3,\dots,k-1\}$ and $0$ respectively.  We compute $d_3(\xi) = -\frac{2s^2}{4p}$, and if $r$ is the rotation number for $L(p,2)$ corresponding to $\xi$ then $d_3(\xi) = -\frac{r^2+p}{4p}$ and so $r^2+p=2s^2$.  Then $r$ must be odd, so $r^2\equiv 1\pmod{8}$, and since $2s^2$ is either $0$ or $2\pmod{8}$ we have $p = 2s^2-r^2 \equiv \pm 1\pmod{8}$.  In particular, $p \geq 7$.  

Suppose that $p\neq 7$.  We begin with the case $p=9$ and observe that $s \in \{-3,-1,1,3\}$.  We cannot have $s=\pm 1$ since $r^2+9=2(\pm 1)^2$ has no solutions, so $s=\pm 3$.  Because $|a_1| = 5$, the induced contact structure on $L(9,2)$ is either $\xican$ or its conjugate.  Lisca \cite[Second Example]{lisca-fillings_announce} showed that $\xican$ has two symplectic fillings up to diffeomorphism and blow-up, and these satisfy $b_2(W_{9,2}(1,2,2,1)) = 2$ and $b_2(W_{9,2}(2,2,1,3)) = 0$ in Lisca's notation (using the continued fraction $\frac{9}{9-2} = [2,2,2,3]$).  Otherwise $p \geq 15$, and Kaloti \cite[Theorem~1.10]{kaloti} showed that for such $p$, every tight contact structure $\xi$ on $L(p,2)$ has a unique Stein filling up to symplectomorphism; since $\xi$ is presented as surgery on a Hopf link, this filling evidently has $b_2(X)=2$.  Thus in any case there is no Stein filling of $(L(p,2),\xi)$ with intersection form $\langle -p \rangle$, which contradicts Proposition~\ref{prop:decomp}.

We must therefore have $p=7$, and $r^2 + 7 = 2s^2$ with $s\in\{\pm 2, 0\}$ has only the solutions $(r,s)=(\pm 1,\pm 2)$, so $L(7,2)$ has rotation numbers precisely $\pm 1$.  If $K$ had a Legendrian representative with $(tb,r)=(-4,r_0)$, i.e., $\maxtb(K) \geq -4$, then it would have representatives with $tb=-6$ and $r \in \{r_0 - 2, r_0, r_0 + 2\}$, so $L(7,2)$ would actually have at least three rotation numbers, since $r_0$ is necessarily odd.  We conclude that $\maxtb(K) \leq -5$ as claimed, and the rotation numbers of the $tb=-6$ representatives of $K$ must be $\pm 1$ since these are the only rotation numbers for $L(7,2)$; if $\maxtb(K) = -5$ then it follows that any $tb$-maximizing representative has $r=0$ and thus positively stabilizes to a representative with $(tb,r)=(6,1)$.
\end{proof}

\subsection{Lens spaces with $a_i \leq -3$ for all $i$}
\label{ssec:lens-ai-3}

In this section we will complete the proof of Theorem \ref{thm:large-negative-surgeries} by studying lens spaces $L(p,q)$ such that every entry in the continued fraction for $-\frac{p}{q}$ is at most $-3$.  In this case, we will need to examine the $d_3$ invariants of tight contact structures on $L(p,q)$ more carefully.  The proof is divided into several cases depending on the length $n$ of the continued fraction.

\subsubsection{Case 1: $n=2$}

In this case it is not hard to explicitly determine $d_3(\xi)$ for any tight contact structure $\xi$ on $L(p,q)$, and using this we can restrict the set of rotation numbers for $L(p,q)$.

\begin{proposition}
\label{prop:p-q-3-n-2}
Suppose that a knot $K$ with $\maxtb(K)=-\tau$, $\tau > 0$, has a reducible Legendrian surgery with an $L(p,q)$ summand, where $-\frac{p}{q} = [-a,-b]$ for some $a,b \geq 3$.  Then $p \leq 2t-3$, where $t = \tau$ if $\tau$ is odd and $t=\tau-1$ if $\tau$ is even.  Moreover, if we have equality then $L(p,q) \cong L(p,4)$ and $K$ has a Legendrian representative with $(tb,r) = (1-p, \frac{p-5}{2})$, which is obtained by positively stabilizing a representative with $tb=\maxtb(K)$ a positive number of times.
\end{proposition}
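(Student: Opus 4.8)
\noindent The plan is to produce many Legendrian representatives of $K$ at framing $1-p$ by stabilizing a $tb$-maximizing one, to read off their rotation numbers via Proposition~\ref{prop:reducible-d3-value}, and to constrain these using the $d_3$ computations of Section~\ref{sec:min-d3}. Write $-\frac{p}{q}=[-a,-b]$ with $a,b\geq 3$, so $p=ab-1$ and $q=b$ by Example~\ref{ex:f-n-2}. Beginning from a representative with $(tb,r)=(-\tau,r_0)$, $r_0\geq 0$, and stabilizing $k=p-\tau-1$ times with both signs (reversing orientation to flip $r_0\mapsto -r_0$), I obtain at $tb=1-p$ representatives realizing every value in the symmetric step-$2$ range $\{-(r_0+k),\dots,r_0+k\}$. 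Since $tb+r$ is odd I may take $r_0\geq 1$ when $\tau$ is even, so in either parity the largest reachable rotation number $R:=r_0+k$ satisfies $R\geq p-t-1$. By Proposition~\ref{prop:reducible-d3-value} each reachable $r$ is a rotation number for $L(p,q)$, and since $\xican$ minimizes $d_3$ (Proposition~\ref{prop:d3-xican-minimal}) every such $r$ obeys $r^2\leq f(p/q)-p=:r_{\max}^2$.

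First I would treat the principal case $(a-4)(b-4)\geq 0$. Here $r_{\max}^2=ab(a+b-7)+9$ is computed directly from Proposition~\ref{prop:a-pq-entries} (or Example~\ref{ex:f-n-2}), and after clearing denominators and substituting $p=ab-1$ the inequality $r_{\max}\leq\frac{p-5}{2}$ reduces exactly to $(a-4)(b-4)\geq 0$. Chaining $p-t-1\leq R\leq r_{\max}\leq\frac{p-5}{2}$ then yields $p\leq 2t-3$. Equality forces each step to be sharp: $r_{\max}=\frac{p-5}{2}$ gives $(a-4)(b-4)=0$, i.e.\ $L(p,q)\cong L(p,4)$; the extreme representative with $(tb,r)=(1-p,\frac{p-5}{2})$ then realizes $d_3=d_3(\xican)$, so it induces $\xican$ or its conjugate, the only tight structures of minimal $d_3$ by Proposition~\ref{prop:d3-xican-minimal}; and $p-t-1=R$ pins down $r_0=\tau-t\in\{0,1\}$, so this representative is gotten from a $tb$-maximizer by exactly $k=p-\tau-1>0$ \emph{positive} stabilizations, as claimed.

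The remaining possibility, which I expect to be the crux, is $(a-4)(b-4)<0$; this occurs precisely when $\{a,b\}=\{3,c\}$ with $c\geq 5$, so $L(p,q)\cong L(p,3)$ with $p=3c-1\geq 14$, and here $r_{\max}>\frac{p-5}{2}$ makes the estimate above useless. To handle it I would exploit the arithmetic rigidity of the rotation numbers of $L(p,3)$: taking the presentation $-\frac{p}{3}=[-c,-3]$ forces $r_2\in\{-1,1\}$, and completing the square in the relation $r^2+p=3r_1^2+2r_1r_2+c$ coming from \eqref{eq:d3-formula-apq} gives the norm equation
\[ (3r_1+r_2)^2-3r^2=2p. \]
Thus $r$ is a rotation number for $L(p,3)$ only if $3r^2+2p$ is a perfect square, and such values are far too sparse to fill out a long arithmetic progression. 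Concretely, if $R\geq 2$ the symmetric reachable range contains two rotation numbers differing by $2$ and lying near $0$ (namely $0,2$ when $p$ is even and $1,3$ when $p$ is odd), hence two perfect squares $3r^2+2p$ differing by at most $24$; this is impossible once $p\geq 14$, since squares near $2p$ are spaced farther apart. Therefore $k\leq 1$, whence $p\leq\tau+2\leq t+3$; as $p\geq 14$ forces $t\geq 11$, we get $p\leq t+3<2t-3$. Thus $L(p,3)$ yields only strict inequalities, so equality holds exactly for $L(p,4)$, completing the proof.
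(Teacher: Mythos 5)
Your principal case $(a-4)(b-4)\geq 0$ is correct, and it is in fact a cleaner route than the paper's: the exact identity $f(p/q)-p=ab(a+b-7)+9$, with $r_{\max}\leq\frac{p-5}{2}$ equivalent to $(a-4)(b-4)\geq 0$, recovers the paper's estimate (which instead bounds $r^2$ by $\psi(q)=\frac{(p-q-1)^2}{q}+p\bigl(\frac{(q-2)^2}{q}-1\bigr)$ and uses monotonicity of $\psi$ in $q$, valid only for $q\geq 4$), and it absorbs $\{a,b\}=\{3,3\}$ and $\{3,4\}$ into the main estimate: for $L(8,3)$ you get $r_{\max}=0$ and for $L(11,3)$ you get $r_{\max}=3=\frac{p-5}{2}$, whereas the paper must treat these separately, via the homeomorphism $L(11,3)\cong L(11,4)$ and, for $L(8,3)$, Lisca's classification of its Stein fillings (no filling has intersection form $\langle -8\rangle$). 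One small omission in your equality analysis: you assert $k=p-\tau-1>0$ without justification; it does hold, since equality forces one of $a,b$ to equal $4$ and the other to be at least $3$, so $p\geq 11$, hence $t\geq 7$ and $k\geq (2t-3)-(t+1)-1=t-5>0$.

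The case $\{a,b\}=\{3,c\}$ with $c\geq 5$, however, contains a genuine gap, in two places. First, the reachable rotation numbers at $tb=1-p$ do \emph{not} form the full symmetric range $\{-(r_0+k),\dots,r_0+k\}$: stabilizing the two oriented representatives with $(tb,r)=(-\tau,\pm r_0)$ yields only the union $\pm\{r_0-k,r_0-k+2,\dots,r_0+k\}$, which misses a neighborhood of $0$ whenever $r_0>k+1$, and nothing bounds $r_0$ above in terms of $k$. So you cannot guarantee that the pair $\{0,2\}$ or $\{1,3\}$ consists of rotation numbers, no matter how large $R$ is. Second, your sparsity heuristic fails for pairs away from zero: if $\frac{p+1}{3}=u^2$ then $r=u\pm 1$ both satisfy your norm equation, since $3(u\pm 1)^2+2p=(3u\pm 1)^2$, so pairs of rotation numbers differing by $2$ cannot be excluded by square-spacing alone. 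The paper closes exactly this hole: writing $3(m\pm 1)^2+2p=s_\pm^2$ for a putative pair $m-1,m+1$ and setting $s_\pm=\sigma\pm\delta$ yields $(\sigma^2-3)(\delta^2-3)+6p=0$, which forces $\delta\in\{0,1\}$, so \emph{any} pair of rotation numbers differing by $2$ equals $\{\pm 1\}$ or $\pm\bigl(\sqrt{\tfrac{p+1}{3}}\pm 1\bigr)$; then, whenever $p>\tau+2$ (so $k\geq 2$), one obtains a \emph{triple} $r_0'-2,r_0',r_0'+2$ of rotation numbers centered at an uncontrolled $r_0'$ --- this is why the triple, rather than a near-zero pair, is the right object --- and the two overlapping pairs force $r_0'=1$ and $p=11<14$, a contradiction. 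Hence $p\leq\tau+2$, after which your concluding estimate $p\leq t+3<2t-3$ goes through verbatim. Your proof needs this pair-and-triple classification substituted for the near-zero square-spacing claim.
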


\begin{proof}
We note that $p=ab-1$ and $q=b$, and since $L(ab-1,b) \cong L(ab-1,a)$ we can assume without loss of generality that $b \leq a$, hence $q \leq \sqrt{p+1}$.

Suppose that $r$ is a rotation number for $L(p,q)$ corresponding to the tight contact structure $\xi$.  Since $n = 2$, Proposition~\ref{prop:d3-xican-minimal}, the computation of Example~\ref{ex:f-n-2}, and \eqref{eq:f-p-q-recurrence} imply that
\[ -\frac{r^2+p}{4p} = d_3(\xi) \geq d_3(\xican) = -\frac{1}{4}\left(\frac{(p-q-1)^2 + p(q-2)^2}{pq}\right), \]
or upon multiplying by $-4p$ and rearranging,
\[ r^2 \leq \frac{(p-q-1)^2}{q} + p\left(\frac{(q-2)^2}{q} - 1\right). \]
We denote the right side of the above inequality, viewed as a function of $q$, as $\psi(q)$.  We see that $\psi$ has derivative $(p+1)\left(1-\frac{p+1}{q^2}\right)$, hence is decreasing on the interval $3 \leq q \leq \sqrt{p+1}$.  Thus $\psi$ is largest when $q$ is as small as possible, and $\psi(4) = \frac{(p-5)^2}{4}$, so we must have $-\frac{p-5}{2} \leq r \leq \frac{p-5}{2}$ except possibly when $q = 3$.
Moreover, if $q>4$ then this inequality is strict.

Suppose that $q \neq 3$ and that a Legendrian representative of $K$ with $tb = -\tau$ has rotation number $r_0 \geq 0$.  Then by positively stabilizing $p-\tau-1$ times we get a representative with $(tb,r) = (1-p, r_0 + p-\tau-1)$, hence this $r$ is a rotation number for $L(p,q)$.  In particular $r_0$ is at least 0 if $\tau$ is odd and 1 if $\tau$ is even, so we have $r \geq p-t-1$; and $r \leq \frac{p-5}{2}$ as explained above, so $p-t-1 \leq \frac{p-5}{2}$ or equivalently $p \leq 2t-3$, with equality only if $q=4$ and the above representative has $r=p-t-1=\frac{p-5}{2}$.  In particular, if there is equality, the lens space summand must be $L(p,4)$, as claimed.  Also, in case $p=2t-3$ we have $p = 4a-1 \geq 11$, since $p = ab - 1$ and we assume $a \geq 3$.  Hence $t \geq 7$; but then $p-\tau-1 \geq (2t-3)-(t+1)-1 > 0$, so this representative must actually be a stabilization.

If $q=3$ instead, then we only get the bound $|r| \leq \frac{p-5}{\sqrt{3}}$ above, but in fact we will see that $p \leq \tau + 2$ except possibly when $p=11$ and $7 \leq \tau \leq 10$.    Our strategy is as follows: supposing that $p > \tau+2 = 2-\maxtb(K)$, we can stabilize a $tb$-maximizing representative as needed to get a Legendrian representative of $K$ with $(tb(K), r(K)) = (3-p, r_0)$ for some $r_0$, which we then stabilize to get representatives with $tb=1-p$ and $r_0 \in \{r_0-2, r_0, r_0+2\}$, and so these three consecutive numbers of the same parity are rotation numbers for $L(p,q)$.  When $p\neq 11$ we will see that this cannot be the case, by determining when two consecutive numbers of the same parity can be rotation numbers for $L(p,3)$; it will follow that $\maxtb(K) \leq 2-p$, or equivalently $p \leq \tau+2$.  If instead $p=11$ then we have $L(11,3)\cong L(11,4)$, and the argument in the $q>3$ case above will apply to show that $p \leq 2t-3$ with equality only if $K$ has a representative with $(tb,r)=(-10,3)$.  Thus we may assume $p\neq 11$ from now on.  After we establish the bound $p \leq \tau + 2$, we will show that this implies $p \leq 2t-4$ unless the lens space is $L(8,3)$ and then complete the proof by analyzing the Stein fillings of $L(8,3)$.

By Theorem~\ref{thm:lens-space-classification}, each tight contact structure $\xi$ on $L(p,3)$ comes from Legendrian surgery on a Hopf link whose components have $tb$ equal to $1-a$ and $-2$, and rotation numbers $s$ and $\pm 1 = u$ respectively.  By \eqref{eq:d3-formula-apq}
\[ d_3(\xi) = -\frac{1}{4}\left(\frac{ 3s^2 + 2su + au^2}{p}\right) = -\frac{3s^2 \pm 2s + a}{4p}. \]
If $r$ is a rotation number for $L(p,q)$ corresponding to $\xi$, then
\[ r^2 + p = 3s^2 \pm 2s + a = \frac{(3s \pm 1)^2 + 3a-1}{3}, \]
and since $p=3a-1$ this is equivalent to $3r^2 + 2p = (3s\pm 1)^2$.  Thus $r$ can only be a rotation number for $L(p,3)$ if there exists some integer $s$ satisfying this equation.

As mentioned in the strategy above, suppose that both $k + 1$ and $k - 1$ are rotation numbers for $L(p,q)$, and $k\geq 0$ without loss of generality; write 
\begin{align}
\label{eq:s_+} 3(k+1)^2 + 2p &= s_+^2 \\
\label{eq:s_-} 3(k-1)^2 + 2p &= s_-^2
\end{align}
for some integers $s_\pm \geq 0$.  Subtracting \eqref{eq:s_-} from \eqref{eq:s_+}, we get $12k = s_+^2-s_-^2$, hence $s_+$ and $s_-$ have the same parity and we can write $s_\pm = c \pm \delta$ for some integers $c \geq \delta \geq 0$.  This gives $12k = 4c\delta$, or $3k = c\delta$, and so multiplying \eqref{eq:s_+} by 3 gives
\[ 3c^2 + 6c\delta + 3\delta^2 = 9k^2 + 18k + 9 + 6p = c^2\delta^2 + 6c\delta + 9 + 6p, \]
or equivalently $(c^2 - 3)(\delta^2 - 3) + 6p = 0$.  Since $p>0$ and $c \geq \delta$, this is impossible if $\delta \geq 2$.  Thus $(c,\delta)$ must be either $(\sqrt{2p+3}, 0)$ or $(\sqrt{3p+3}, 1)$.  Since $k=\frac{c\delta}{3}$, we conclude that numbers which differ by $2$ can only both be rotation numbers if they are $\pm 1$ or $\pm\left(\sqrt{\frac{p+1}{3}} \pm 1\right)$, hence if they are both nonnegative then they equal $\sqrt{\frac{p+1}{3}} \pm 1$.

In particular, suppose that $r_0-2, r_0, r_0+2$ are all rotation numbers, and $r_0 \geq 0$ without loss of generality.  Then by the above $r_0$ and $r_0+2$ must equal $\sqrt{\frac{p+1}{3}} \pm 1$.  Since $r_0-2$ and $r_0$ are another pair of rotation numbers which differ by 2, they must be either $\pm 1$ or $-\left(\sqrt{\frac{p+1}{3}}\pm 1\right)$; but the elements in the latter pair are both negative since $p>3$, whereas $r_0 \geq 0$, so $r_0$ and $r_0-2$ must equal $\pm 1$.  Thus we have $1 = r_0 = \sqrt{\frac{p+1}{3}}-1$ and $p=11$.  We conclude as explained above that if $p\neq 11$ then $p \leq \tau+2$.

The inequality $p \leq \tau+2$ implies that $p \leq 2t-4$ for all $\tau \geq 7$, so this leaves only $\tau \leq 6$, in which case $p \leq \tau+2 \leq 8$ and the only such lens space is $L(8,3)$.  In this case we have $d_3(\xican)=-\frac{1}{4}$ and so the induced contact structure must be $\xican$ or its conjugate, by Proposition~\ref{prop:d3-xican-minimal}, with a Stein filling $W$ having intersection form $\langle-8\rangle$ by Proposition~\ref{prop:decomp}.  Lisca \cite{lisca-fillings_announce} showed that $W$ must be diffeomorphic to a blow-up of one of two fillings, denoted $W_{8,3}(1,2,1)$ or $W_{8,3}(2,1,2)$ (note that $\frac{8}{8-3}=[2,3,2]$), and the first cannot occur since it has $b_2 = 2$.  The second is constructed from a diagram in which $2$-handles are attached to a pair of parallel $-1$-framed unknots; the cocores of these handles, together with the annulus they cobound, produce a sphere of self-intersection $-2$, and so the intersection form of this or any blow-up cannot be $\langle -8\rangle$.  We conclude that $L(8,3)$ cannot occur as a summand, completing the proof.
\end{proof}

\begin{remark}
\label{rem:small-lens-spaces}
We have seen in the proofs of Propositions~\ref{prop:possible-tb-Lp2} and \ref{prop:p-q-3-n-2} that $L(p,2)$ for $p\neq 7$ and $L(8,3)$ cannot be summands of reducible Legendrian surgeries, in both cases by examining their symplectic fillings.  We can rule out many other lens spaces $L(p,q)$ for $p$ small simply by computing $d_3(\xican)$ and appealing to Remark~\ref{rmk:large-d3-not-reducible}: on $L(p,p-1)$ we have $d_3(\xican) = \frac{p-3}{4} > -\frac{1}{4}$ for $p>2$, and on $L(7,3)$, $L(8,5)$, $L(9,4)$, and $L(10,3)$ we compute that $d_3(\xican)$ is equal to $\frac{1}{7}$, $\frac{1}{8}$, $\frac{7}{18}$, and $-\frac{1}{20}$ respectively.  Up to homeomorphism, this eliminates all lens spaces with $p\leq 10$ as possible summands except for $L(p,1)$ and $L(7,2) \cong L(7,4)$.
\end{remark}


\subsubsection{Case 2: $n \geq 3$}

In order to rule out $L(p,q)$ summands in Theorem \ref{thm:large-negative-surgeries} in this case, where $n \geq 3$ and $a_i \leq -3$ for all $i$, we will need to bound $d_3(\xican)$ carefully enough to restrict the set of possible rotation numbers for $L(p,q)$.  We begin with the following lemma.

\begin{lemma}
\label{lem:pq-easy-bound-a1-large}
If $-\frac{p}{q} = [a_1,\dots,a_n]$ and $a_1 \leq -3$, then $p \geq 2q+1$.
\end{lemma}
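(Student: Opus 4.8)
The plan is to read the bound directly off the numerator recursion \eqref{eq:numerator-recursion}, after disposing of the length-one case by hand. First I would treat $n=1$: here $-\frac{p}{q}=[a_1]=a_1$ forces $q=1$ and $p=|a_1|$, so the hypothesis $a_1\le -3$ gives $p=|a_1|\ge 3 = 2q+1$ immediately.

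For $n\ge 2$ the key input is the recursion recorded in the proof of Proposition~\ref{prop:count-xi}: writing $-\frac{q}{r}=[a_2,\dots,a_n]$, equation \eqref{eq:numerator-recursion} yields $q=s$ and $p=|a_1|q-r$. The tail $[a_2,\dots,a_n]$ is again a continued fraction with all entries $\le -2$ and length $n-1\ge 1$, so it presents a lens space $L(q,r)$ with $1\le r<q$; in particular $r\le q-1$. Combining $|a_1|\ge 3$ with this inequality then gives
\[ p = |a_1|q - r \ge 3q-(q-1) = 2q+1, \]
which is exactly the claim. So no induction is even needed here — a single application of the recursion suffices once the length-one case is handled separately.

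The only point requiring care is the inequality $r\le q-1$, i.e.\ that the tail continued fraction genuinely produces a proper fraction $-\frac{q}{r}$ with $0<r<q$ under our standing convention $q>r\ge 1$. This is the single inequality on which the whole argument rests, so I would state it explicitly; it follows from the convention governing the continued fractions $[a_2,\dots,a_n]$ used throughout, and in the shortest case $n=2$ it is visible directly since $-\frac{q}{r}=[a_2]=a_2$ forces $r=1$ and $q=|a_2|\ge 2$. Beyond verifying this, there is no real obstacle: the lemma is a coarse numerical consequence of $|a_1|\ge 3$ together with $r<q$.
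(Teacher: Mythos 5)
Your proposal is correct and matches the paper's own argument: the paper likewise disposes of $n=1$ directly and, for $n\geq 2$, applies the recursion \eqref{eq:numerator-recursion} to write $p = |a_1|q - r \geq |a_1|q - (q-1) = (|a_1|-1)q+1 \geq 2q+1$. Your extra care in isolating and checking the inequality $r \leq q-1$ is sound and consistent with the paper's standing convention $1 \leq r < q$ for continued fractions with all entries at most $-2$.
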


\begin{proof}
This is obviously true when $n=1$, since $p \geq 3$ and $q=1$.  If $n>1$, then by \eqref{eq:numerator-recursion}, $-\frac{q}{r} = [a_2,\dots,a_n]$ for some $r < q$, and $p = |a_1|q-r \geq |a_1|q - (q-1) = (|a_1|-1)q + 1$.  Since $|a_1| \geq 3$, we have $p \geq 2q+1$ as desired.
\end{proof}

We recall from Section~\ref{ssec:minimal-d3} the notation $d(b_1,\dots,b_k) = |\det(M)|$, where $M$ is the tridiagonal matrix with diagonal entries $b_1,\dots,b_k$ and all entries above and below the diagonal equal to $1$; in particular $d(a_1,\dots,a_n) = p$, and $d()=1$ by convention.

\begin{lemma}
\label{lem:det-product}
Write $-\frac{p}{q} = [a_1,a_2,\dots,a_n]$, $n\geq 2$, and suppose that $a_i \leq -3$ for all $i$.  Then for any $i,j$ with $1\leq i\leq j \leq n$, we have
\[ d(a_1,\dots,a_{i-1})d(a_{j+1},\dots,a_n) < \frac{p}{\prod_{k=i}^j (|a_k|-1)}. \]
\end{lemma}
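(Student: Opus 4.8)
The plan is to clear the positive denominator and prove the equivalent inequality
\[ Q(i,j) := d(a_1,\dots,a_{i-1})\,d(a_{j+1},\dots,a_n)\prod_{k=i}^j(|a_k|-1) < p = d(a_1,\dots,a_n), \]
reducing first to the case of a single inner index and then settling that case by a direct computation powered by two elementary facts about the determinants $d(\cdots)$: a \emph{splitting identity} and a \emph{growth estimate}.

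First I would record the tools. Writing $D_k = d(a_1,\dots,a_k)$ and $e_s = d(a_s,\dots,a_n)$ (with $D_0 = e_{n+1} = d()=1$), Lemma~\ref{lem:d-recursion} and its mirror image, obtained by reversing the diagonal (which does not change the determinant), give the three-term recursions $D_k = |a_k| D_{k-1} - D_{k-2}$ and $e_s = |a_s| e_{s+1} - e_{s+2}$. From these, by an easy induction using $|a_i|\geq 3$, one gets the growth estimate $D_k > 2D_{k-1} > 0$ and $e_s > 2e_{s+1} > 0$ for all relevant indices; in particular every $d(\cdots)$ is positive and strictly increasing in its length, and consequently
\[ d(a_1,\dots,a_k) > (|a_k|-1)\,d(a_1,\dots,a_{k-1}), \qquad d(a_s,\dots,a_n) > (|a_s|-1)\,d(a_{s+1},\dots,a_n), \]
since e.g. $D_k - (|a_k|-1)D_{k-1} = D_{k-1}-D_{k-2} > 0$. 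I would also prove the splitting identity
\[ d(a_1,\dots,a_n) = d(a_1,\dots,a_m)\,d(a_{m+1},\dots,a_n) - d(a_1,\dots,a_{m-1})\,d(a_{m+2},\dots,a_n) \]
for $1 \leq m \leq n-1$ by induction on $m$, the base case $m=1$ being exactly Lemma~\ref{lem:d-recursion} and the inductive step following by substituting the two recursions above.

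Next I would reduce to a single inner index. For $j > i$, the second growth estimate applied at $s=j$ gives $(|a_j|-1)\,d(a_{j+1},\dots,a_n) < d(a_j,\dots,a_n)$, and hence $Q(i,j) < Q(i,j-1)$. Iterating this, $Q(i,j) \leq Q(i,i)$, so it suffices to prove the single-index inequality $Q(m,m) < p$ for every $m$.

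Finally, for the single-index case I would apply the splitting identity at $m$ together with $D_m = |a_m|D_{m-1} - D_{m-2}$ and cancel to obtain, for an interior index $2 \leq m \leq n-1$,
\[ p - Q(m,m) = e_{m+1}\big(D_{m-1}-D_{m-2}\big) - D_{m-1}\,e_{m+2}, \]
while the boundary indices $m=1$ and $m=n$ reduce at once to the two growth estimates. Positivity of this expression is the crux: using $D_{m-1} > 2D_{m-2}$ (so $D_{m-1}-D_{m-2} > \tfrac12 D_{m-1}$) together with $e_{m+1} > 2e_{m+2}$, I get $e_{m+1}(D_{m-1}-D_{m-2}) > \tfrac12 e_{m+1}D_{m-1} > e_{m+2}D_{m-1}$, whence $p - Q(m,m) > 0$. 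I expect this to be the main obstacle: the naive bound $D_m > (|a_m|-1)D_{m-1}$ only yields $p > Q(m,m) - D_{m-1}e_{m+2}$, which is too weak, so one genuinely needs the stronger factor-$2$ growth on \emph{both} the left and right blocks (a consequence of $|a_i|\geq 3$) to overcome the subtracted continuant term.
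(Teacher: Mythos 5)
Your proof is correct, and it takes a genuinely different route from the paper's. The paper splits into cases: for $i=j=1$ (and by reversal $i=j=n$) it uses the continued-fraction recursion $p=|a_1|q-r$ directly, while for an interior diagonal index $1<i=j<n$ it argues \emph{topologically}, performing slam dunks on the surgery chain to reduce to a three-component chain with rational framings and computing $p = rt\left|a_i + \frac{1}{t/u} + \frac{1}{r/s}\right|$, then invoking Lemma~\ref{lem:pq-easy-bound-a1-large} to get $\frac{t}{u},\frac{r}{s} > 2$; the off-diagonal case $i<j$ is then handled by chaining the one-step bound for the right-hand block and applying the diagonal case. You replace all of this with purely algebraic continuant manipulations: your splitting identity $p = D_m e_{m+1} - D_{m-1}e_{m+2}$ is exactly the algebraic shadow of the paper's slam-dunk determinant (with $r=D_{m-1}$, $s=D_{m-2}$, $t=e_{m+1}$, $u=e_{m+2}$ one recovers $p = |a_m|rt - st - ru$), and your factor-$2$ growth estimates $D_k > 2D_{k-1}$, $e_s > 2e_{s+1}$ are precisely the algebraic form of Lemma~\ref{lem:pq-easy-bound-a1-large}, playing the identical role of overcoming the subtracted continuant term. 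Your diagnosis of the crux is accurate: the naive bound $D_m > (|a_m|-1)D_{m-1}$ is indeed insufficient, and the paper's proof needs the same strengthening (it is exactly why it quotes $\frac{t}{u},\frac{r}{s}>2$ rather than $>1$). What your approach buys is self-containedness and uniformity — no surgery presentations, no case trichotomy on where the diagonal index sits (your boundary cases $m=1,n$ fall out of the same recursions), and an exact difference formula $p - Q(m,m) = e_{m+1}(D_{m-1}-D_{m-2}) - D_{m-1}e_{m+2}$ rather than an inequality chain; what the paper's buys is brevity by reusing machinery (equation \eqref{eq:numerator-recursion} and Lemma~\ref{lem:pq-easy-bound-a1-large}) already in place. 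One trivial bookkeeping point: your mirror recursion $e_s = |a_s|e_{s+1}-e_{s+2}$ implicitly needs the convention $e_{n+2}=0$ at $s=n$, but the inequality $(|a_n|-1)e_{n+1} < e_n$ used there is immediate anyway.
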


\begin{proof}
We first prove this in the case $i=j$.  If $i=j=1$ then this amounts to proving that
\begin{equation}\label{eq:d(a2)-bound}
d(a_2,\dots,a_n) < \frac{p}{|a_1|-1}, 
\end{equation}
which follows immediately from noting that $q = d(a_2,\dots,a_n)$ and that if we write $-\frac{q}{r} = [a_2,\dots,a_n]$, then by \eqref{eq:numerator-recursion}, $p = |a_1|q - r > |a_1|q - q$.  (Note that this inequality still holds when $n=1$, in which case it says that $1 < \frac{p}{p-1}$.)  Likewise, for $i=j=n$ we observe that $d(a_1,\dots,a_n) = d(a_n,\dots,a_1)$ and hence 
\[ d(a_1,\dots,a_{n-1}) = d(a_{n-1},\dots,a_1) < \frac{p}{|a_n|-1} \]
by the preceding argument.

If instead we have $1 < i=j < n$, then we recall that $p$ is the order of $H_1(L(p,q))$, where $L(p,q)$ is the result of surgery on a chain of $n$ unknots with framings $a_1,a_2,\dots,a_n$ in order.  If we perform slam dunk operations repeatedly on either end of the chain, until all that remains are the $i$th unknot and one unknot on either side of it, then the framings of the unknots on either end are now $-\frac{r}{s} = [a_{i-1},a_{i-2},\dots,a_1]$ and $-\frac{t}{u} = [a_{i+1},a_{i+2},\dots,a_n]$.  Since Dehn surgery on this 3-component chain produces $L(p,q)$, its first homology is presented by the associated framing matrix, hence
\[ p = \left|\det\left(\begin{array}{ccc} -r & s & 0 \\ 1 & a_i & 1 \\ 0 & u & -t \end{array}\right)\right| 
= \left|a_i rt + ru + st\right|
= rt \left| a_i + \frac{1}{t/u} + \frac{1}{r/s} \right|. \]
Now $\frac{t}{u} > 2$ by Lemma \ref{lem:pq-easy-bound-a1-large} and likewise for $\frac{r}{s}$, so we have
\[ -a_i - \frac{1}{t/u} - \frac{1}{r/s} > |a_i| - 1 > 0 \]
and hence $p > rt(|a_i| - 1)$.  This gives the desired inequality since $r=d(a_1,\dots,a_{i-1})$ and $t=d(a_{i+1},\dots,a_n)$, completing the proof when $i=j$.

Finally, suppose that $i < j$.  Repeated application of \eqref{eq:d(a2)-bound} gives
\[ d(a_{j+1},\dots,a_n) < \frac{d(a_j,\dots,a_n)}{|a_j|-1} < \dots < \frac{d(a_{i+1},\dots,a_n)}{(|a_{i+1}|-1)\dots(|a_j|-1)}, \]
so multiplying both sides by $d(a_1,\dots,a_{i-1})$ produces
\[ d(a_1,\dots,a_{i-1})d(a_{j+1},\dots,a_n) < \frac{d(a_1,\dots,a_{i-1})d(a_{i+1},\dots,a_n)}{\prod_{k=i+1}^j (|a_k|-1)}
< \frac{p}{\prod_{k=i}^j (|a_k|-1)}, \]
where the last inequality follows from applying the case $i=j$ which was already proved.
\end{proof}

We will apply Lemma \ref{lem:det-product} to get an upper bound on $f(\frac{p}{q})$, as computed in \eqref{eq:f-matrix-formula}, which we recall from Section \ref{ssec:recurrence} determines the minimal $d_3$ invariant $d_3(\xican)$ by the formula $d_3(\xican) = \frac{1}{4}\left(-\frac{1}{p}f(\frac{p}{q})+n-2\right)$ where $-\frac{p}{q}$ has continued fraction $[a_1,\dots,a_n]$ of length $n$.  In what follows we will continue to assume that $a_i \leq -3$ for all $i$, though we will only require $n \geq 2$.

More explicitly, recall that $f(\frac{p}{q}) = \rv^T A_{p/q} \rv$, where $\rv = \langle |a_1|-2, \dots, |a_n|-2 \rangle$ and $A_{p/q}$ is the symmetric matrix whose $(i,j)$th entry ($i\leq j)$ is
\[ c'_{ij} = d(a_1,\dots,a_{i-1}) d(a_{j+1},\dots,a_n) \]
according to Proposition~\ref{prop:a-pq-entries}.  We decompose $f(\frac{p}{q}) = \rv^T A_{p/q} \rv$ into two sums (differently than in Section~\ref{ssec:recurrence}).  The first sum comes from the contributions of the diagonal terms in $A_{p/q}$, which satisfy
\begin{equation}\label{eq:Apq-diagonal}
 \sum_{i=1}^n (|a_i|-2)^2 c'_{ii} < \sum_{i=1}^n \frac{p(|a_i|-2)^2}{|a_i|-1} < p \left(\sum_{i=1}^n (|a_i|-2)\right), 
\end{equation}
where we have applied Lemma \ref{lem:det-product} to produce the first inequality.  The other sum comes from the off-diagonal terms, satisfying
\begin{align}
\nonumber 2\sum_{1 \leq i < j \leq n} (|a_i|-2)(|a_j|-2)c'_{ij} &< 2\sum_{i<j} \frac{(|a_i|-2)(|a_j|-2)p}{\prod_{k=i}^{j} (|a_k|-1)} \\
\label{eq:Apq-off-diagonal} &= 2p\sum_{i<j} \frac{1}{\prod_{k=i+1}^{j-1} (|a_k|-1)}\cdot\frac{|a_i|-2}{|a_i|-1}\cdot \frac{|a_j|-2}{|a_j|-1} \\
\nonumber &< 2p\sum_{i<j} \frac{1}{2^{j-i-1}}
\end{align}
again by Lemma \ref{lem:det-product} and the fact that $\frac{1}{|a_k|-1} \leq \frac{1}{2}$.  In the last sum, the quantity $k=j-i-1$ can take any value from $0$ to $n-2$, and each value of $k$ is taken by $n-(k+1)$ pairs $(i,j)$, so we have
\[ \sum_{1 \leq i < j \leq n} \frac{1}{2^{j-i-1}} = \sum_{k=0}^{n-2} \frac{n-1-k}{2^k} = 2n - 4 + \frac{1}{2^{n-2}} \]
as can be shown by an easy induction argument.  In particular, it is bounded above by $2n-3$, so by combining the bounds for the diagonal and off-diagonal terms coming from \eqref{eq:Apq-diagonal} and \eqref{eq:Apq-off-diagonal}, we conclude that
\[ f\left(\frac{p}{q}\right) < p\left(\sum_{i=1}^n(|a_i|-2) + 2(2n-3)\right) = p\left(\sum_{i=1}^n (|a_i|-1) + 3n - 6\right). \]
Combining this with equation \eqref{eq:d3-formula-fpq}, we conclude the following.

\begin{proposition}
\label{prop:f-bound-ai-large}
If $-\frac{p}{q} = [a_1,\dots,a_n]$, with $n \geq 2$ and $a_i \leq -3$ for all $i$, then 
\[ d_3(\xican) > -\frac{1}{4}\left(\sum_{i=1}^n (|a_i|-1) + 2n-4\right), \]
where $\xican$ is the canonical contact structure on $L(p,q)$.
\end{proposition}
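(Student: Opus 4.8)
The plan is to convert the desired lower bound on $d_3(\xican)$ into an upper bound on $f(\frac{p}{q})$ and then estimate $f(\frac{p}{q}) = \rv^T A_{p/q}\rv$ entry by entry using Lemma~\ref{lem:det-product}. By \eqref{eq:d3-formula-fpq} we have $d_3(\xican) = \frac{1}{4}\left(-\frac{1}{p}f(\frac{p}{q}) + n - 2\right)$, so the claimed strict inequality is equivalent to $f(\frac{p}{q}) < p\left(\sum_{i=1}^n(|a_i|-1) + 3n-6\right)$, and I would establish this last inequality directly before substituting back.

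First I would split $f(\frac{p}{q}) = \rv^T A_{p/q}\rv$, where $\rv = \langle|a_1|-2,\dots,|a_n|-2\rangle$ and the $(i,j)$th entry of $A_{p/q}$ is $c'_{ij} = d(a_1,\dots,a_{i-1})d(a_{j+1},\dots,a_n)$ by Proposition~\ref{prop:a-pq-entries}, into the diagonal sum $\sum_i (|a_i|-2)^2 c'_{ii}$ and twice the off-diagonal sum $\sum_{i<j}(|a_i|-2)(|a_j|-2)c'_{ij}$ (the factor of $2$ coming from the symmetry of $A_{p/q}$). For the diagonal terms, Lemma~\ref{lem:det-product} with $i=j$ gives $c'_{ii} < \frac{p}{|a_i|-1}$, so each summand is bounded by $\frac{p(|a_i|-2)^2}{|a_i|-1} < p(|a_i|-2)$, which is exactly the diagonal estimate \eqref{eq:Apq-diagonal}.

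For the off-diagonal terms the same lemma yields $c'_{ij} < \frac{p}{\prod_{k=i}^j(|a_k|-1)}$; factoring out the two boundary ratios $\frac{|a_i|-2}{|a_i|-1}\cdot\frac{|a_j|-2}{|a_j|-1} < 1$ and using $\frac{1}{|a_k|-1}\leq\frac{1}{2}$ on the remaining middle factors bounds the off-diagonal contribution by $2p\sum_{i<j}2^{-(j-i-1)}$, as recorded in \eqref{eq:Apq-off-diagonal}. The only genuinely computational step is evaluating this double sum: grouping the index pairs by the value $k=j-i-1$ (there are $n-1-k$ such pairs for each $0\leq k\leq n-2$) gives $\sum_{i<j}2^{-(j-i-1)} = \sum_{k=0}^{n-2}(n-1-k)2^{-k} = 2n-4+2^{-(n-2)}$, which is strictly less than $2n-3$; I would confirm the closed form by the indicated induction. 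Adding the diagonal and off-diagonal bounds produces $f(\frac{p}{q}) < p\big(\sum_i(|a_i|-2) + 2(2n-3)\big) = p\big(\sum_i(|a_i|-1) + 3n-6\big)$, and inserting this into the $d_3$ formula finishes the proof.

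The hard part is really just the bookkeeping in the off-diagonal estimate: correctly counting how many pairs $(i,j)$ contribute each power of $2$ and verifying the closed form of the resulting geometric-type sum. There is no deep obstacle, since the hypothesis $a_i\leq -3$ (so $|a_i|-1\geq 2$) together with $q<p$ guarantees that all the inequalities from Lemma~\ref{lem:det-product} and from $\frac{1}{|a_k|-1}\leq\frac12$ are available and strict; everything else is an immediate consequence of the entry formula for $A_{p/q}$.
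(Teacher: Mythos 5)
Your proposal is correct and follows essentially the same route as the paper's proof: the identical diagonal/off-diagonal split of $\rv^T A_{p/q}\rv$, the same entrywise application of Lemma~\ref{lem:det-product}, and the same evaluation of the sum $\sum_{k=0}^{n-2}(n-1-k)2^{-k}=2n-4+2^{-(n-2)}$. One tiny correction: at $n=2$ that sum equals $2n-3$ exactly rather than being strictly less (the paper accordingly claims only a non-strict bound there), but your final strict inequality on $f\left(\frac{p}{q}\right)$ survives because the entrywise estimates from Lemma~\ref{lem:det-product} are strict.
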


Now suppose that $r$ is a rotation number for $L(p,q)$ corresponding to the tight contact structure $\xi$, where $p,q$ are as in Proposition \ref{prop:f-bound-ai-large}.  Then $-\frac{r^2+p}{4p} = d_3(\xi) \geq d_3(\xican)$ by Proposition~\ref{prop:d3-xican-minimal}, so
\begin{equation}
\label{eq:r2-bound-d3-can}
r^2 \leq -p(4d_3(\xican) + 1)
\end{equation}
and Proposition \ref{prop:f-bound-ai-large} ensures that
\begin{equation}
\label{eq:r2-bound-ai-large}
r^2 < p\left(\sum_{i=1}^n (|a_i|-1) + 2n-5\right).
\end{equation}

\begin{proposition}
\label{prop:r-bound-most-n-large}
Suppose that $r$ is a rotation number for $L(p,q)$, where $-\frac{p}{q} = [a_1,\dots,a_n]$ with $n \geq 3$ and $a_i \leq -3$ for all $i$. Then $|r| \leq \frac{p-6}{2}$, except possibly when $\min_i |a_i| = 3$ and $n$ is either $3$ or $4$.
\end{proposition}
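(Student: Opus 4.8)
The plan is to feed the inequality \eqref{eq:r2-bound-ai-large} into the target bound and reduce everything to a single, clean estimate on $p$. Since $|r| \le \frac{p-6}{2}$ is equivalent to $r^2 \le \frac{(p-6)^2}{4}$, and \eqref{eq:r2-bound-ai-large} gives $r^2 < p\big(\sum_{i=1}^n(|a_i|-1)+2n-5\big)$, it suffices to prove
\[ p\left(\sum_{i=1}^n(|a_i|-1)+2n-5\right) \le \frac{(p-6)^2}{4}. \]
Using $\sum_{i=1}^n(|a_i|-1)+2n-5 = \sum_{i=1}^n|a_i| + n - 5$ and clearing denominators, this rearranges to $p+8+\frac{36}{p} \ge 4\sum_{i=1}^n|a_i| + 4n$; dropping the positive term $\frac{36}{p}$, it is therefore enough to establish the purely arithmetic lower bound
\[ p \ge 4\sum_{i=1}^n|a_i| + 4n - 8. \]
A quick inspection shows this bound fails only in (a subset of) the excluded cases, so the whole proposition comes down to proving it whenever we are \emph{not} in the excluded situation.

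My main tool would be a monotonicity argument built on the cofactor description of $p$. Writing $p = d(a_1,\dots,a_n)$ as in Proposition~\ref{prop:a-pq-entries}, this determinant is linear in each diagonal entry, and the relevant cofactor is exactly $d(a_1,\dots,a_{i-1})\,d(a_{i+1},\dots,a_n)$; hence making $|a_i|$ larger by $1$ increases $p$ by precisely this product, which for $n\ge 3$ is always at least $4$ (indeed at least $d(-3,-3)=8$, since at least one factor is a $d(\cdots)$ of at least two entries each $\le -3$). Meanwhile the right-hand side $4\sum_{i=1}^n|a_i|+4n-8$ increases by exactly $4$ under the same move. Consequently the ``slack'' $p - \big(4\sum_{i=1}^n|a_i|+4n-8\big)$ is nondecreasing as any $|a_i|$ is increased, so it suffices to verify the bound at the componentwise-minimal configurations allowed in each case.

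It then remains to check these minimal configurations. For $n\ge 5$ the minimal configuration is $[-3,\dots,-3]$, where the recursion $p_i = 3p_{i+1}-p_{i+2}$ yields $p = F_{2n+2}$ (the even-indexed Fibonacci numbers $3,8,21,55,144,\dots$), and $F_{2n+2}\ge 16n-8$ holds for all $n\ge 5$ with room to spare (at $n=5$ one has $144\ge 72$, and the Fibonacci side grows exponentially); monotonicity then covers every continued fraction of length $n\ge 5$. For $n\in\{3,4\}$ the bound genuinely fails at $[-3,\dots,-3]$, which is exactly why those lengths must be excluded once $\min_i|a_i|=3$; but when $\min_i|a_i|\ge 4$ the minimal configuration is $[-4,\dots,-4]$, where a direct computation gives $p=56$ for $n=3$ (versus $52$) and $p=209$ for $n=4$ (versus $72$), so the bound holds there and hence, by monotonicity, for every length-$3$ or length-$4$ continued fraction all of whose entries are at most $-4$. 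This exhausts all non-excluded cases. The main obstacle is precisely these short continued fractions: crude estimates such as $p\ge\prod_i(|a_i|-1)$ are far too weak for $n=3,4$, so the reduction to the explicit minimal fillings $[-4,-4,-4]$ and $[-4,-4,-4,-4]$ is what makes the argument work, and it is also what pins down the exceptional set $\{\min_i|a_i|=3,\ n\in\{3,4\}\}$.
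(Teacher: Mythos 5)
Your proof is correct, but it takes a genuinely different route from the paper's. Both arguments take \eqref{eq:r2-bound-ai-large} as the starting point, but the paper then bounds each factor $|a_i|-1$ using the \emph{count of tight contact structures}: via Proposition~\ref{prop:count-xi} it deduces $|a_i|-1 \leq \frac{p}{m(m-1)^{n-2}} - \frac{m-1}{m}(n-1)$ with $m=\min_i|a_i|$, sums these to get $r^2 < \left(\frac{n}{m(m-1)^{n-2}}\right)p^2 - \frac{8}{3}p$, and checks that the coefficient of $p^2$ is at most $\frac{1}{4}$ precisely when $m\geq 4$, $n\geq 3$ or $m=3$, $n\geq 5$ --- i.e.\ exactly outside the excluded cases. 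You instead reduce everything to the purely arithmetic inequality $p \geq 4\sum_i|a_i| + 4n - 8$ and prove it by a discrete monotonicity argument: $p = d(a_1,\dots,a_n)$ is linear in each diagonal entry with increment $d(a_1,\dots,a_{i-1})\,d(a_{i+1},\dots,a_n)$ under $|a_i|\mapsto|a_i|+1$, and this increment is always at least $8 > 4$ when $n\geq 3$ and all $a_i\leq -3$ (one small imprecision: for the middle entry when $n=3$ \emph{both} factors are singletons, so the increment is $|a_1||a_3|\geq 9$ rather than containing a two-entry $d(\cdots)$ as you claim, but your stated bound still holds), so the slack is nondecreasing and only the componentwise-minimal configurations need checking; your base-case computations are right ($p=F_{2n+2}\geq 16n-8$ for all-threes with $n\geq 5$, and $56\geq 52$, $209\geq 72$ for all-fours with $n=3,4$), your reduction algebra checks out (strictness in \eqref{eq:r2-bound-ai-large} gives $|r|<\frac{p-6}{2}$ directly, without even needing integrality of $r$, which the paper does use), and the coverage is complete. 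What each approach buys: yours is entirely self-contained and elementary --- it bypasses Proposition~\ref{prop:count-xi} altogether and makes vividly transparent why the exceptional set is exactly $\{m=3,\ n\in\{3,4\}\}$, since your sufficient bound fails at $[-3,-3,-3]$ ($21 < 40$) and fails by exactly $1$ at $[-3,-3,-3,-3]$ ($55<56$); the paper's version is shorter given that Proposition~\ref{prop:count-xi} is already in hand from the proof of Theorem~\ref{thm:main} and reuses the same counting machinery that drives Proposition~\ref{prop:p-bound-ai-2}.
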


\begin{proof}
Let $m\geq 3$ denote the minimum value of $|a_i|$ over all $i$.  Then for any fixed $i$ we have
\begin{align}
\nonumber |a_i|-1 &= \frac{\prod_{j=1}^n (|a_j|-1)}{\prod_{k\neq i} (|a_k|-1)} \\
\label{eq:rot-bounds-product} &\leq \frac{\frac{m-1}{m}(p - (n-1)(m-1)^{n-1})}{(m-1)^{n-1}} \\
\nonumber &= \frac{p}{m(m-1)^{n-2}} - \frac{m-1}{m}(n-1),
\end{align}
since the numerator $\prod_j (|a_j|-1)$ is the number of tight contact structures on $L(p,q)$ by Theorem~\ref{thm:lens-space-classification}, which we bound from above using Proposition \ref{prop:count-xi}.  Combining the inequality \eqref{eq:r2-bound-ai-large} with this bound for each $i$, we get
\begin{align*}
r^2 &< p\left(\left(\frac{n}{m(m-1)^{n-2}}\right)p -\frac{m-1}{m}(n^2 - n) + 2n-5\right) \\
&= p\left(\left(\frac{n}{m(m-1)^{n-2}}\right)p - \frac{m-1}{m}\left(\left(n-\frac{3m-1}{2m-2}\right)^2 + \frac{11m^2-14m-1}{(2m-2)^2}\right)\right)\\
& \leq \left(\frac{n}{m(m-1)^{n-2}}\right)p^2 - \left(\frac{m-1}{m} + \frac{11m^2-14m-1}{4m(m-1)}\right) p \\
& < \left(\frac{n}{m(m-1)^{n-2}}\right)p^2 - \frac{8}{3}p
\end{align*}
since $n-\frac{3m-1}{2m-2} \geq n-2 \geq 1$ and $11m^2-14m-1 > 2\cdot 4m(m-1) > 0$ for all $m \geq 3$.  In particular, if $m \geq 4$ then the coefficient of $p^2$ in this bound is at most $\frac{n}{4\cdot 3^{n-2}} \leq \frac{1}{4}$ for all $n \geq 3$.  Similarly, if $m=3$ and $n\geq 5$ then the coefficient of $p^2$ is $\frac{n}{3\cdot 2^{n-2}} \leq \frac{5}{24}$.  Thus in either case we conclude that $r^2 < \frac{p^2}{4} - \frac{8}{3}p < \left(\frac{p-5}{2}\right)^2$, and since $|r|$ is an integer it must be at most $\frac{p-6}{2}$, as desired.
\end{proof}

In order to deal with the case $m=\min_i |a_i| = 3$ and $n<5$, we can modify the proof of Proposition~\ref{prop:r-bound-most-n-large} by refining the bounds on each $|a_i|-1$ as follows.  Let $M = \max_i |a_i|$, and let $j$ be an index for which $|a_j|=M$.  Then we still have by the same argument as for \eqref{eq:rot-bounds-product} that 
\[ |a_i| - 1 \leq \frac{\frac{2}{3}(p-(n-1)2^{n-1})}{\prod_{k\neq i}(|a_k|-1)}, \]
but now the denominator is at least $(M-1)\cdot 2^{n-2}$ except possibly when $i=j$, in which case it is still at least $2^{n-1}$.  Now the inequality \eqref{eq:r2-bound-ai-large} yields
\begin{equation}
\label{eq:r2-bound-ai-large-refined}
r^2 < p\left( \frac{2}{3}(p-(n-1)2^{n-1})\cdot \left(\frac{n-1}{(M-1) 2^{n-2}} + \frac{1}{2^{n-1}}\right) + 2n-5 \right).
\end{equation}

\begin{lemma}
\label{lem:r-bound-ai-large-n-small}
If $r$ is a rotation number for $L(p,q)$, where $-\frac{p}{q} = [a_1,\dots,a_n]$, $\min_i |a_i| = 3$, and $n$ is either 3 or 4, then $|r| \leq \frac{p-6}{2}$.
\end{lemma}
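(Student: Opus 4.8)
The plan is to reduce everything to the single inequality $r^2 < \left(\frac{p-5}{2}\right)^2$, which suffices because $|r|$ is an integer, exactly as at the end of the proof of Proposition~\ref{prop:r-bound-most-n-large}. I would treat $n=3$ and $n=4$ separately, and in each case split into a ``large $p$'' regime handled by an inequality and a ``small $p$'' regime handled by direct computation.

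For the large-$p$ regime I would not use \eqref{eq:r2-bound-ai-large-refined} in the crude form stated, since its coefficient of $p^2$ only drops below $\frac14$ for large $M$; instead I would sharpen the estimate on $\sum_i(|a_i|-1)$ in \eqref{eq:r2-bound-ai-large} using the total product. Since $\min_i|a_i|=3$, at least one factor of $\prod_i(|a_i|-1)$ equals $2$, and Proposition~\ref{prop:count-xi} gives $\prod_i(|a_i|-1)\le \frac23\big(p-(n-1)2^{n-1}\big)$. Maximizing $\sum_i(|a_i|-1)$ subject to this product bound with every factor $\ge 2$ forces all but one factor to equal $2$, which yields $\sum_i(|a_i|-1)\le 2(n-1)+\frac{p-(n-1)2^{n-1}}{3\cdot 2^{n-2}}$. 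Substituting into \eqref{eq:r2-bound-ai-large} bounds $r^2$ by a quadratic in $p$ whose leading coefficient $\frac{1}{3\cdot 2^{n-2}}$ is strictly below $\frac14$; comparing with $\left(\frac{p-5}{2}\right)^2$ then gives $r^2 < \left(\frac{p-5}{2}\right)^2$ once $p$ exceeds an explicit threshold, which works out to $p\ge 73$ for $n=3$ and $p\ge 57$ for $n=4$.

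It remains to handle the finitely many lens spaces below these thresholds. For $n=4$ the only continued fraction $[a_1,\dots,a_4]$ with $\min|a_i|=3$ and $p<57$ is $[-3,-3,-3,-3]$, giving $L(55,21)$; the recurrence in Proposition~\ref{prop:f-recurrence-eqn} yields $d_3(\xican)=-\frac15>-\frac14$, so by \eqref{eq:r2-bound-d3-can} there are no rotation numbers at all and the claim is vacuous. For $n=3$ the continued fractions with $\min|a_i|=3$ and $p\le 72$ form a short explicit list; for each I would compute $f(p/q)$ from Proposition~\ref{prop:f-recurrence-eqn} and bound $r^2\le f(p/q)-2p$ via \eqref{eq:r2-bound-d3-can} and \eqref{eq:d3-formula-fpq}. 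The smallest cases (such as $[-3,-3,-3]$, where $d_3(\xican)=-\frac{3}{14}$) again satisfy $d_3(\xican)>-\frac14$ and admit no rotation numbers, while in the remaining cases the computed value of $r^2$ is comfortably below $\left(\frac{p-5}{2}\right)^2$.

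The main obstacle is precisely this finite residual for $n=3$. The general inequalities are too lossy there: for configurations with two entries equal to $-3$ (e.g.\ $[-3,-3,-z]$) the estimate \eqref{eq:r2-bound-ai-large} overshoots, and for the very smallest lens spaces it exceeds $\left(\frac{p-5}{2}\right)^2$ even though no rotation number exists, so one cannot avoid the sharp values of $d_3(\xican)$ supplied by the recurrence. Organizing this into a clean, checkable list rather than a brute-force enumeration is the part requiring the most care; the key simplification is that the product bound from Proposition~\ref{prop:count-xi} confines the residual to $p\le 72$, keeping the list manageable.
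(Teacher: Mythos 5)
Your proposal is correct, and it follows the paper's overall scaffolding --- reduce to $r^2 < \left(\frac{p-5}{2}\right)^2$ and invoke integrality of $|r|$, handle large $p$ by a quadratic-in-$p$ bound derived from \eqref{eq:r2-bound-ai-large}, and dispose of a finite residual by exact computation of $d_3(\xican)$ via Proposition~\ref{prop:f-recurrence-eqn} together with \eqref{eq:r2-bound-d3-can} --- but your key analytic estimate is genuinely different. The paper bounds each factor $|a_i|-1$ separately against the product of the others (the refined inequality \eqref{eq:r2-bound-ai-large-refined}), which forces a case analysis on $M=\max_i|a_i|$ for $n=4$ (cases $M=4$, $M=5$, $6\le M\le 9$, $M\ge 10$, supplemented by the lower bound $p>31$) and a separate manipulation $5(a+b+c)\le abc$ for $n=3$ covering $M\ge 8$, leaving a finite list with all entries in $\{-3,\dots,-7\}$ (35 cases up to symmetry). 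You instead bound $\sum_i(|a_i|-1)$ directly by an extremal exchange argument: subject to $\prod_i(|a_i|-1)\le \frac{2}{3}\left(p-(n-1)2^{n-1}\right)$ (Proposition~\ref{prop:count-xi} combined with the count in Theorem~\ref{thm:lens-space-classification}) with all factors at least $2$, the sum is maximized when all but one factor equal $2$ --- replacing a pair $(x_i,x_j)$ by $(x_ix_j/2,\,2)$ increases the sum since $(x_i-2)(x_j-2)\ge 0$ --- giving a single bound with leading coefficient $\frac{1}{3\cdot 2^{n-2}}<\frac14$, uniform in $M$. I verified your thresholds: for $n=3$ the comparison reduces to $p^2-74p+75\ge 0$, i.e.\ $p\ge 73$, and for $n=4$ to $2p^2-114p+75\ge 0$, i.e.\ $p\ge 57$, both as you state; the $n=4$ residual is indeed only $[-3,-3,-3,-3]$ with $p=55$ (a single entry $-4$ already gives $p\in\{76,79\}$), handled exactly as in the paper via $d_3(\xican)=-\frac15>-\frac14$. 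The trade-off is the shape of the residual: organizing it by $p\le 72$ rather than by bounded entries means your $n=3$ list contains lens spaces the paper's list omits, e.g.\ $[-3,-3,-9]$ ($p=69$, where the recurrence gives $f=519$, so $r^2\le 519-138=381<\left(\frac{p-5}{2}\right)^2$) and $[-3,-8,-3]$ ($p=66$, $f=444$, $r^2\le 312$), and these do pass comfortably, so your deferred finite verification --- asserted at the same level of explicitness as the paper's own 35-case check --- is sound. The two residual lists are of comparable size, so neither route has a decisive computational advantage; yours is arguably cleaner in the large-$p$ regime since it avoids all case analysis on $M$, while the paper's keeps the residual entries small.
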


\begin{proof}
We will establish the bound $r^2 \leq \frac{p^2}{4} - \frac{5}{2}p$.  Indeed, since this is less than $\left(\frac{p-5}{2}\right)^2$ it will follow that $|r| < \frac{p-5}{2}$, and since $|r|$ is an integer we can conclude that $|r| \leq \frac{p-6}{2}$.

We first consider the case $n=4$.  In this case, the inequality \eqref{eq:r2-bound-ai-large-refined} becomes $r^2 < p\left(\frac{2}{3}(p-24)\left(\frac{3}{4(M-1)} + \frac{1}{8}\right) + 3\right)$, or
\begin{equation}\label{eq:r-squared-M}
 r^2 < p\left(\left(\frac{1}{2(M-1)}+\frac{1}{12}\right)p + 1 - \frac{12}{M-1}\right). 
\end{equation}
If $M \geq 4$, we can bound the right hand side of \eqref{eq:r-squared-M} above by $\frac{p^2}{4} - \frac{5}{2}p$ as follows:
\begin{itemize}
\item if $M=4$ then the right hand side of \eqref{eq:r-squared-M} is equal to $\frac{1}{4}p^2 - 3p < \frac{p^2}{4} - \frac{5}{2}p$;
\item if $M=5$ then it is equal to $\frac{5}{24}p^2 - 2p \leq \frac{p^2}{4}-\frac{5}{2}p$, assuming $p \geq 12$;
\item if $6 \leq M \leq 9$ then it is at most $\frac{11}{60}p^2 - \frac{1}{2}p \leq \frac{p^2}{4}-\frac{5}{2}p$, assuming $p\geq 30$;
\item if $M \geq 10$ then it is at most $\frac{5}{36}p^2 +p \leq \frac{p^2}{4}-\frac{5}{2}p$, assuming $p \geq 32$.
\end{itemize}
We can check that in any case, we must have $p \geq 32$ as follows.  If $-\frac{p_i}{q_i} = [a_i,\dots,a_4]$ for $1\leq i\leq 4$, then $p_4 = |a_4| \geq 3$ and $p_i \geq 2p_{i+1}+1$ for $i \leq 3$, by Lemma \ref{lem:pq-easy-bound-a1-large} and the fact that $q_i = p_{i+1}$.  This implies that $p_3 \geq 7$, $p_2 \geq 15$, and $p = p_1 \geq 31$; we could only have equality if $a_i=-3$ for all $i$, but $[-3,-3,-3,-3] = -\frac{55}{21}$, so $p > 31$ after all.  Thus when $n=4$ and $M>3$, we have $r^2 \leq \frac{p^2}{4} - \frac{5}{2} p$, and so $|r| \leq \frac{p-6}{2}$ as claimed.  The remaining case when $n=4$ is $M=m=3$, and we compute for $L(55,21)$ that $d_3(\xican)=-\frac{1}{5} > -\frac{1}{4}$, so $L(55,21)$ has no rotation numbers anyway by Remark~\ref{rmk:large-d3-not-reducible}.

Finally, in the case $n=3$, writing the continued fraction as $[-a,-b,-c]$ for convenience, we compute that $p=abc-a-c$.  Thus equation \eqref{eq:r2-bound-ai-large} says that $r^2 < p(\frac{p}{4}-\frac{5}{2})$ whenever $a+b+c-2 \leq \frac{abc-a-c}{4}-\frac{5}{2}$, or equivalently $5a+4b+5c+2 \leq abc$, and since $b>2$ it suffices to have $5(a+b+c) \leq abc$.  This last inequality is symmetric in $a,b,c$, so we may assume for convenience that $3=a \leq b \leq c=M$; then $5(a + b + c) \leq abc$ is satisfied whenever $b \geq \frac{5M+15}{3M-5}$, and since $b\geq 3$ this is automatic as long as $M \geq \frac{15}{2}$.  We conclude that if $n=3$ and $M \geq 8$, then the rotation numbers satisfy $|r| \leq \frac{p-6}{2}$.

Thus in all cases we conclude that $|r| \leq \frac{p-6}{2}$, except possibly when $-\frac{p}{q} = [a_1,a_2,a_3]$ with $-7 \leq a_i \leq -3$ for all $i$ and $\min_i |a_i|=3$.  There are only 61 such continued fractions, and by insisting that $|a_1| \leq |a_3|$ (since $[a_1,a_2,a_3]$ and $[a_3,a_2,a_1]$ produce homeomorphic lens spaces), we can reduce this number to 35. In each case we explicitly compute $d_3(\xican)$ to show that $-(4d_3(\xican)+1) < \frac{p}{4}-\frac{5}{2}$, and hence that $|r| \leq \frac{p-6}{2}$ via \eqref{eq:r2-bound-d3-can}.  This gives the desired claim.
\end{proof}

\begin{corollary}
\label{cor:p-bound-ai-large}
Let $K$ be a knot with $\maxtb(K) = -\tau$, $\tau > 0$, and define $t=\tau$ if $\tau$ is odd and $t=\tau-1$ if $\tau$ is even.  Suppose that $K$ has a reducible $-p$-surgery for some $p > \tau$, and write $S^3_{-p}(K) = L(p,q) \# Y$.
If the continued fraction $-\frac{p}{q} = [a_1,\dots,a_n]$ satisfies $n\geq 3$ and $a_i \leq -3$ for all $i$, then $p \leq 2t-4$.
\end{corollary}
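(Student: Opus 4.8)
The plan is to combine the universal bound $|r|\le \frac{p-6}{2}$ on rotation numbers for $L(p,q)$ (in this continued-fraction regime) with the fact that stabilizing a $tb$-maximizing representative of $K$ forces a correspondingly large rotation number to occur, thereby pinning down $p$ in terms of $\tau$.

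First I would observe that Proposition~\ref{prop:r-bound-most-n-large} and Lemma~\ref{lem:r-bound-ai-large-n-small} together cover every case under consideration: Proposition~\ref{prop:r-bound-most-n-large} shows that any rotation number $r$ for $L(p,q)$ satisfies $|r|\le \frac{p-6}{2}$ whenever $n\ge 3$ and $a_i\le -3$ for all $i$, with the sole exceptions being $\min_i|a_i|=3$ together with $n\in\{3,4\}$, and Lemma~\ref{lem:r-bound-ai-large-n-small} disposes of exactly those exceptional cases. Hence, under the hypotheses of the corollary, every rotation number $r$ for $L(p,q)$ obeys $|r|\le \frac{p-6}{2}$.

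Next I would manufacture such a rotation number directly from $K$. Choose a Legendrian representative with $tb=-\tau$ and rotation number $r_0$, oriented so that $r_0\ge 0$. Since $tb+r$ is always odd, the parity of $r_0$ is dictated by that of $\tau$; in particular, when $\tau$ is even we must have $r_0$ odd, hence $r_0\ge 1$. Because $p>\tau$, I may positively stabilize this representative $p-\tau-1\ge 0$ times, obtaining a representative with $(tb,r)=(1-p,\,r_0+p-\tau-1)$. Tracking the parity of $\tau$ through the definition of $t$ shows that in both cases this rotation number is at least $p-t-1$, and this quantity is nonnegative since $t\le \tau<p$. By Proposition~\ref{prop:reducible-d3-value}, performing Legendrian surgery on this representative induces a tight contact structure $\xi$ on $L(p,q)$ with $d_3(\xi)=-\frac{r^2+p}{4p}$, so $r=r_0+p-\tau-1$ is a rotation number for $L(p,q)$.

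Finally I would chain the two inequalities together: from the previous two paragraphs, $p-t-1 \le r \le \frac{p-6}{2}$, and multiplying through by $2$ and rearranging yields $p\le 2t-4$, as claimed. I expect no serious obstacle here, since all of the genuine analytic content lives in Proposition~\ref{prop:r-bound-most-n-large} and Lemma~\ref{lem:r-bound-ai-large-n-small}; the only mild subtlety is the bookkeeping in the second step, namely verifying that the parity-dependent starting value $r_0$ leads to the uniform lower bound $r\ge p-t-1$ regardless of whether $\tau$ is even or odd. Thus this corollary is essentially a clean assembly of the bounds established above.
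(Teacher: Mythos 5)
Your proposal is correct and follows the paper's proof essentially verbatim: the paper likewise combines Proposition~\ref{prop:r-bound-most-n-large} and Lemma~\ref{lem:r-bound-ai-large-n-small} to get $|r| \leq \frac{p-6}{2}$, and then deduces $p - t - 1 \leq \frac{p-6}{2}$, hence $p \leq 2t-4$, by the same stabilization argument (which the paper simply cites from the proof of Proposition~\ref{prop:p-q-3-n-2} rather than spelling out, as you did). Your parity bookkeeping for $r_0$ and the resulting uniform lower bound $r \geq p-t-1$ matches the paper's argument exactly.
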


\begin{proof}
Proposition \ref{prop:r-bound-most-n-large} and Lemma \ref{lem:r-bound-ai-large-n-small} show that the rotation numbers $r$ for $L(p,q)$ satisfy $|r| \leq \frac{p-6}{2}$.  This implies $p-t-1 \leq \frac{p-6}{2}$, or equivalently $p \leq 2t-4$, exactly as in the proof of Proposition \ref{prop:p-q-3-n-2}.
\end{proof}

\bibliographystyle{hplain}
\bibliography{References}

\end{document}